\documentclass[11pt]{amsart}
\usepackage{mathrsfs}
\usepackage{amsfonts}
\usepackage{latexsym,amsmath,amssymb}
%\usepackage{latexsym, amsmath}
%\usepackage{thesis,newlfont,amsthm}
%\usepackage[active]{srcltx}
%%No src specials are written when loading the package by
%%\usepackage[inactive]{srcltx}

%\usepackage{showkeys}
 \textwidth 5.5 true in
%\textheight 8.5 true in
%\topmargin -0.5 true in
\oddsidemargin 0.35 true in

\evensidemargin 0.35 true in

\setcounter{section}{0}

\pagestyle{myheadings} \footskip=50pt

 \renewcommand{\epsilon}{\varepsilon}

\newtheorem{theorem}{Theorem}[section]
 
 \newtheorem{lemma}[theorem]{Lemma}
 
 \newtheorem{Corollary}[theorem]{Corollary}
 \newtheorem{proposition}[theorem]{proposition}
 \newtheorem{Proposition}[theorem]{Proposition}
\newtheorem{deff}[theorem]{Definition}
 \newtheorem{rem}[theorem]{Remark}
 \newcommand{\bth}{\begin{theorem}}
 \newcommand{\ble}{\begin{lemma}}
 \newcommand{\bcor}{\begin{corr}}
 \newcommand{\bdeff}{\begin{deff}}
 \newcommand{\bprop}{\begin{proposition}}
 \newcommand{\ele}{\end{lemma}}
 \newcommand{\ecor}{\end{corr}}
 \newcommand{\edeff}{\end{deff}}
 
 \newcommand{\eprop}{\end{proposition}}

 \renewcommand{\Pi}{\varPi}

 \renewcommand{\epsilon}{\varepsilon}

\numberwithin{equation}{section}

\pagestyle{plain}

\title
[Nonlinear second boundary problem]{On the second boundary value problem for Lagrangian mean curvature equation}

%\author{Rongli Huang}
%\address{School of Mathematics and Statistics, Guangxi Normal University,
%Guilin, Guangxi 541004, People's Republic of China,
% E-mail: ronglihuangmath@gxnu.edu.cn}

\thanks{The second author is supported by National Natural Science Foundation of China (No. 11771103 and 11871102) and Guangxi Natural Science Foundation (2017GXNSFFA198017). The third author is supported in part by the National Natural Science Foundation of China (11631002 and 11871102).}

\date{}

\begin{document}
\maketitle

\begin{center}Chong Wang \footnote{Fengtai School of the High School Affiliated to Renmin University of China, Beijing 100074, China. wch0229@mail.bnu.edu.cn}
$\cdot$ Rongli Huang \footnote{The corresponding author. School of Mathematics and Statistics, Guangxi Normal University, Guangxi 541004, China. ronglihuangmath@gxnu.edu.cn}
$\cdot$ Jiguang Bao \footnote{School of Mathematical Sciences, Beijing Normal University, Laboratory of Mathematics and Complex Systems, Ministry of Education, Beijing 100875, China. jgbao@bnu.edu.cn}
\end{center}

%\section{}
%\subsection{}

\begin{abstract}
Considering the second boundary value problem of the Lagrangian mean curvature equation, we obtain the existence and uniqueness of the smooth uniformly convex solution, which generalizes the Brendle-Warren's theorem about minimal Lagrangian diffeomorphism in Euclidean metric space.
\end{abstract}

{\bfseries Mathematics Subject Classification 2000:}\quad 35J25 $\cdot$ 35J60 $\cdot$ 53A10

%{\bfseries Key words:}\quad Second boundary problem; Fully nonlinear equation; Minimal Lagrangian graph.

%\let\thefootnote\relax\footnote{2010 \textit{Mathematics Subject Classification: Primary 53C44; Secondary 53A10.}

%\let\thefootnote\relax\footnote{\textit{Keywords and phrases: Second boundary problem; Fully nonlinear equation; Minimal Lagrangian graph.}

\section{Introduction}
 The main aim of this article is to study the existence and uniqueness of the smooth uniformly convex solution for the second boundary value problem of the Lagrangian mean curvature equation
\begin{equation}\label{e1.1}
\left\{ \begin{aligned}F_\tau(\lambda(D^2 u))&=\kappa\cdot x+c,\ \ x\in \Omega, \\
Du(\Omega)&=\tilde{\Omega},
\end{aligned} \right.
\end{equation}
where $\Omega$ and $\tilde{\Omega}$ are two uniformly convex bounded domains with smooth boundary in $\mathbb{R}^{n}$, $\kappa\in \mathbb{R}^{n}$ is a constant vector, $\lambda(D^2 u)=(\lambda_1,\cdots, \lambda_n)$ are the eigenvalues of Hessian matrix $D^2 u$, $c$ is a constant to be determined and
\begin{equation}\label{e1.101}
F_{\tau}(\lambda):=\left\{ \begin{aligned}
&\frac{1}{n}\sum_{i=1}^n\ln\lambda_{i}, &&\tau=0, \\
& \frac{\sqrt{a^2+1}}{2b}\sum_{i=1}^n\ln\frac{\lambda_{i}+a-b}{\lambda_{i}+a+b},  &&0<\tau<\frac{\pi}{4},\\
& -\sqrt{2}\sum_{i=1}^n\frac{1}{1+\lambda_{i}}, &&\tau=\frac{\pi}{4},\\
& \frac{\sqrt{a^2+1}}{b}\sum_{i=1}^n\arctan\frac{\lambda_{i}+a-b}{\lambda_{i}+a+b},  \ \ &&\frac{\pi}{4}<\tau<\frac{\pi}{2},\\
& \sum_{i=1}^n\arctan\lambda_{i}, &&\tau=\frac{\pi}{2},
\end{aligned} \right.
\end{equation}
where $a=\cot \tau$, $b=\sqrt{|\cot^2\tau-1|}$.

Let
$$g_\tau=\sin \tau \delta_0+\cos \tau g_0,\ \ \tau\in\left[0,\frac{\pi}{2}\right]$$
be the linear combined metric of the standard Euclidean metric
$$\delta_0=\sum_{i=1}^n dx_i\otimes dx_i+\sum_{j=1}^ndy_j\otimes dy_j$$
and the pseudo-Euclidean metric
$$g_{0}=\frac{1}{2}\sum_{i=1}^n dx_i\otimes dy_i+\frac{1}{2}\sum_{j=1}^n dy_j\otimes dx_j$$
in $\mathbb{R}^{n}\times \mathbb{R}^{n}$.

In 2010, Warren \cite{MW} firstly obtained that if $(x, Du(x))$ is a minimal Lagrangian graph in $(\mathbb{R}^n\times\mathbb{R}^n, g_\tau)$, then $u$ satisfies
\begin{equation}\label{equ0.1.5}
F_\tau(\lambda(D^2 u))=c,
\end{equation}
which is a special case of (\ref{e1.1}) when $\kappa\equiv 0$.

If $\tau=0$, (\ref{equ0.1.5}) is the famous Monge-Amp\`{e}re equation
$$\det D^2u=e^{2c},$$
which general form is
\begin{equation}\label{equ0.1.3}
\det D^2 u=f(x, u, Du).
\end{equation}

If $\tau=\frac{\pi}{2}$, (\ref{equ0.1.5}) becomes the special Lagrangian equation
\begin{equation}\label{equ0.1.1}
\sum_{i=1}^{n}\arctan\lambda_{i}(D^2 u)=c.
\end{equation}
The special Lagrangian equation was first introduced by Harvey and Lawson in \cite{HL}. They proved that a Lagrangian graph $(x, Du(x))$ in $(\mathbb{R}^n\times\mathbb{R}^n, \delta_0)$ is minimal if and only if the Lagrangian angle is a constant, that is, (\ref{equ0.1.1}) holds.
According to (1.5),  several authors  obtained the same  Bernstein type theorems simultaneously using  different techniques. Jost and Xin \cite{JX} used the properties of harmonic maps into convex subsets of Grassmannians. Yuan \cite{Y} used the geometric measure theory.

We will be considering the Lagrangian graphs of  prescribed constant mean curvature $\kappa$ in $(\mathbb{R}^n\times\mathbb{R}^n, g_\tau)$ and
$Du$ is a diffeomorphism from $\Omega$ to $\tilde{\Omega}$. For $\kappa\equiv 0$, finding a minimal Lagrangian diffeomorphism between two uniformly convex bounded domains in $(\mathbb{R}^n\times\mathbb{R}^n, g_\tau)$ is equivalent to solving (\ref{equ0.1.5}) with second boundary condition
\begin{equation}\label{equ0.1.6}
D u(\Omega)=\tilde{\Omega},
\end{equation}
that is,
\begin{equation}\label{e1.1.1}
\left\{ \begin{aligned}F_\tau(\lambda(D^2 u))&=c,\ \ x\in \Omega, \\
Du(\Omega)&=\tilde{\Omega}.
\end{aligned} \right.
\end{equation}
Here $Du$ is a minimal Lagrangian diffeomorphism from $\Omega$ to $\tilde \Omega$ in $(\mathbb{R}^n\times\mathbb{R}^n, g_\tau)$.

In dimension 2, Delano\"{e} \cite{P} obtained a unique smooth solution for the second boundary value problem of the Monge-Amp\`{e}re equation for $\tau=0$ in (\ref{e1.1.1}) if both domains are uniformly convex. Later the generalization of Delano\"{e}'s theorem to higher dimensions was given by Caffarelli \cite{L} and Urbas \cite{JU}. Using the parabolic method, Schn\"{u}rer and Smoczyk \cite{OK} also obtained the existence of solutions to (\ref{e1.1.1}) for $\tau=0$.

As far as $\tau=\frac{\pi}{2}$ is concerned, Brendle and Warren \cite{SM} proved the existence and uniqueness of the solution by the elliptic method, and the second author \cite{HR} obtained the existence of solution by the parabolic method. Then by the elliptic and parabolic method, the second author with Ou \cite{HO}, Ye \cite{HRY} \cite{CHY} and Chen \cite{CHY} proved the existence and uniqueness of the solution to (\ref{e1.1.1}) for $0<\tau<\frac{\pi}{2}$.

For a smooth function $f$, Chen, Zhang and the third author \cite{BCZ} proved that if $u$ satisfies
\begin{equation}\label{equ0.1.8}
\sum_{i=1}^{n}\arctan\lambda_{i}(D^2 u)=f(x),
\end{equation}
then $Df(x)$ is the mean curvature of the gradient graph $(x,Du(x))$ in $(\mathbb{R}^n\times\mathbb{R}^n, \delta_0)$. Motivated by the works of \cite{MW}and \cite{BCZ}, for $\tau\in \left[0,\frac{\pi}{2}\right]$ we obtain a generalization of their results.

\begin{Proposition}\label{prop2.10.1}
If $u$ satisfies
\begin{equation}\label{equ2.10.1}
F_\tau(\lambda(D^2 u))=f(x),
\end{equation}
 Then $Df(x)$ is the mean curvature of the gradient graph $(x,Du(x))$ in $(\mathbb{R}^n\times\mathbb{R}^n, g_\tau)$.
\end{Proposition}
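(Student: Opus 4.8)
\emph{Proof sketch (plan).} The plan is to compute the mean curvature vector of the gradient graph $\Sigma=\{(x,Du(x)):x\in\Omega\}$ directly, exploiting that $g_\tau$ has constant coefficients, and then to recognize the answer as $df$ via an algebraic identity between the inverse induced metric and the first derivative of $F_\tau$ evaluated at the Hessian. First parametrize $\Sigma$ by $\Phi(x)=(x,Du(x))$ and set $M=D^2u$, $u_a=\partial_au$; a tangent frame is $X_i=\partial_{x_i}+\sum_k u_{ik}\partial_{y_k}$, and since $\Phi^{*}\bigl(\sum_i dx_i\wedge dy_i\bigr)=\sum_{i,k}u_{ik}\,dx_i\wedge dx_k=0$, the graph is Lagrangian for the standard symplectic form $\omega_0=\sum_i dx_i\wedge dy_i$. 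Because $g_\tau=\sin\tau\,\delta_0+\cos\tau\,g_0$ has constant coefficients its Levi--Civita connection is the flat one, so by the Gauss formula the second fundamental form is $II_{ij}=\partial^{2}_{ij}\Phi-\Gamma^{k}_{ij}\partial_k\Phi$ with $\Gamma^k_{ij}$ the Christoffel symbols of $g^\Sigma:=\Phi^{*}g_\tau$, and consequently the mean curvature vector is the Beltrami Laplacian of the position map, $H=\Delta_{g^\Sigma}\Phi$, i.e.\ $H^{x_a}=\Delta_{g^\Sigma}x_a$ and $H^{y_a}=\Delta_{g^\Sigma}u_a$. (Everything is carried out on the set where $g^\Sigma$ is nondegenerate, which includes the uniformly convex regime.)

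Next I would compute the induced metric: one gets $g^\Sigma=\sin\tau\,(I+(D^2u)^{2})+2\cos\tau\,D^2u$, so that in an eigenbasis of $M$ the matrix $g^\Sigma$ has eigenvalues $g(\lambda_i)$ where $g(t)=\sin\tau(1+t^{2})+2\cos\tau\,t=\sin\tau\,(t^{2}+2\cot\tau\,t+1)$. The crucial algebraic point is the identity
\[
g(t)\,h_\tau'(t)\equiv 1,\qquad\text{where}\quad F_\tau(\lambda)=\sum_{i=1}^n h_\tau(\lambda_i),
\]
equivalently $(g^\Sigma)^{-1}=h_\tau'(D^2u)$ in the sense of the spectral calculus for symmetric matrices. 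This is what turns the antiderivative $\int dt/g(t)$ into precisely the logarithmic, reciprocal, or arctangent branch occurring in \eqref{e1.101}: the sign of $\cot^{2}\tau-1$ decides which branch, and with $a=\cot\tau$, $b=\sqrt{|\cot^{2}\tau-1|}$ one recovers the exact coefficients in the definition of $F_\tau$, the switch at $\tau=\pi/4$ being the source of its case distinction.

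Then I would extract the mean curvature form. Writing $\Delta_{g^\Sigma}\phi=\tfrac{1}{\sqrt{\det g^\Sigma}}\,\partial_i\bigl(\sqrt{\det g^\Sigma}\,(g^\Sigma)^{ij}\partial_j\phi\bigr)$ and using $u_{ab}=u_{ba}$, a one-line cancellation gives
\[
\omega_0(H,X_b)=\sum_a(\Delta_{g^\Sigma}x_a)\,u_{ab}-\Delta_{g^\Sigma}u_b=-(g^\Sigma)^{ij}u_{ijb}=-\operatorname{tr}\bigl((g^\Sigma)^{-1}\partial_b M\bigr),
\]
and since $\partial_b\lambda_i=\operatorname{tr}(P_i\,\partial_bM)$ for the spectral projections $P_i$ of $M$, the identity $(g^\Sigma)^{-1}=h_\tau'(M)$ yields
\[
\operatorname{tr}\bigl((g^\Sigma)^{-1}\partial_bM\bigr)=\sum_i h_\tau'(\lambda_i)\,\partial_b\lambda_i=\partial_b\!\Bigl(\sum_i h_\tau(\lambda_i)\Bigr)=\partial_b\bigl(F_\tau(\lambda(D^2u))\bigr)=\partial_b f,
\]
the last equality being the hypothesis \eqref{equ2.10.1}. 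Hence $\omega_0(H,\cdot)|_{T\Sigma}=-df$ once $T\Sigma$ is identified with $T\Omega$ by the first projection; equivalently $H=J_\tau\nabla^{g^\Sigma}(f\circ\pi_1)$, where $J_\tau$ is defined by $g_\tau(\cdot,\cdot)=\omega_0(\cdot,J_\tau\cdot)$ (one checks $J_\tau^{2}=\cos2\tau\cdot\mathrm{Id}$, interpolating between the complex structure at $\tau=\pi/2$ and the para-complex structure at $\tau=0$; the rephrasing is immediate for $\tau\neq\pi/4$ and extends by continuity). Up to the sign convention in the definition of the mean curvature form this is exactly the statement that $Df(x)$ is the mean curvature of $(x,Du(x))$ in $(\mathbb R^n\times\mathbb R^n,g_\tau)$, and for $\tau=\pi/2$ it is the classical relation $H=J\nabla_\Sigma\theta$ with $\theta$ the Lagrangian angle, recovering the result of \cite{BCZ}.

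The main obstacle is the algebraic identity $g(t)h_\tau'(t)\equiv1$: it must be verified in each of the five regimes of $\tau$ in \eqref{e1.101} by hand, and it is the only place where the explicit expressions $a=\cot\tau$, $b=\sqrt{|\cot^{2}\tau-1|}$ and the exact constants in the definition of $F_\tau$ are used. Everything else---the Beltrami formula $H=\Delta_{g^\Sigma}\Phi$, the cancellation producing $\omega_0(H,X_b)=-\operatorname{tr}((g^\Sigma)^{-1}\partial_bM)$, and the eigenvalue-derivative identity $\partial_b\lambda_i=\operatorname{tr}(P_i\partial_bM)$---is routine; the only care required is to work where $g^\Sigma$ is nondegenerate (automatic for uniformly convex $u$, with $\sqrt{|\det g^\Sigma|}$ in the indefinite cases) and to account at $\tau=0$ (the Monge--Amp\`ere case) for the normalization $\tfrac1n$ in $F_0$.
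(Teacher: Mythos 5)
Your proof is correct, and while it reaches the same reduction as the paper---everything comes down to the pointwise identity $g^{ij}u_{ijk}=f_k$ on the graph---you arrive there by a different and in one respect more complete route. The paper projects $\overline{\nabla}_{E_i}E_j$ onto an explicit normal frame $M_\alpha$ and compares $H_\alpha$ with $\langle (Df)^\perp,M_\alpha\rangle$, whereas you work intrinsically, writing $H=\Delta_{g^\Sigma}\Phi$ for the Beltrami Laplacian of the position map and reading off the mean-curvature one-form via $\omega_0(H,X_b)=-(g^\Sigma)^{ij}u_{ijb}$ after the divergence terms cancel. Both get to the same place. The genuine value added in your version is that you actually \emph{prove} $g^{ij}u_{ijk}=f_k$: you isolate the spectral identity $(g^\Sigma)^{-1}=h_\tau'(D^2u)$, i.e.\ $g(t)\,h_\tau'(t)\equiv 1$ with $g(t)=\sin\tau(1+t^2)+2\cos\tau\,t$ and $F_\tau=\sum_i h_\tau(\lambda_i)$, which is exactly what makes the five explicit branches of $F_\tau$ the ``right'' operators, and then differentiate $F_\tau(\lambda(D^2u))=f$ using $\partial_b\lambda_i=\operatorname{tr}(P_i\,\partial_b D^2u)$. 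The paper's proof stops at ``if $g^{ij}u_{ijk}=f_k$ then $\overrightarrow H=(Df)^\perp$'' and never verifies the hypothesis of that implication; your argument supplies the missing step. You are also right to flag the $\tau=0$ normalization: with $F_0=\frac1n\sum\ln\lambda_i$ one gets $g(t)h_0'(t)=\frac2n$ rather than $1$, so the clean statement $g^{ij}u_{ijk}=f_k$ holds verbatim only for $\tau\in(0,\pi/2]$, consistent with the hypotheses of Theorem~\ref{t1.1}. One small remark: your formula $g(t)=\sin\tau(1+t^2)+2\cos\tau\,t$ uses the convention $g_\tau(e_i,e_{n+j})=\cos\tau\,\delta_{ij}$, which matches the paper's displayed induced metric (with the factor $2\cos\tau\,u_{ij}$) and is what is needed for the identity to close, even though the paper's written normalization of $g_0$ appears to carry a stray factor of $\tfrac12$; this is a notational slip in the source, not an error in your computation.
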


For $f(x)=\kappa\cdot x+c$, Proposition \ref{prop2.10.1} becomes
\begin{Corollary}\label{c1.1}
If $u$ satisfies (\ref{e1.1}), then $\kappa$ is the constant mean curvature of the gradient graph $(x,Du(x))$ in $(\mathbb{R}^n\times\mathbb{R}^n, g_\tau)$.
\end{Corollary}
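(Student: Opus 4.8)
The plan is to deduce the corollary directly from Proposition \ref{prop2.10.1} by specializing the right-hand side, so the only work is to check the hypotheses and evaluate the resulting formula. First I would set $f(x) = \kappa \cdot x + c$, which is a smooth function on $\Omega$. With this choice the first equation in (\ref{e1.1}), namely $F_\tau(\lambda(D^2 u)) = \kappa \cdot x + c$, is precisely equation (\ref{equ2.10.1}). Hence the hypothesis of Proposition \ref{prop2.10.1} is satisfied, and the proposition yields that $Df(x)$ is the mean curvature of the gradient graph $(x, Du(x))$ in $(\mathbb{R}^n \times \mathbb{R}^n, g_\tau)$.

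Next I would compute the gradient of this particular $f$: since $\kappa \in \mathbb{R}^n$ is a constant vector and $c \in \mathbb{R}$ is a constant, $Df(x) = \kappa$ for every $x \in \Omega$. Thus the mean curvature of the gradient graph equals the fixed vector $\kappa$ at every point of $\Omega$, i.e. the graph has constant mean curvature $\kappa$, which is exactly the assertion of the corollary.

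I do not expect any genuine obstacle here: the substantive content lies entirely in Proposition \ref{prop2.10.1}, whose proof presumably expresses the second fundamental form of the Lagrangian graph with respect to $g_\tau$ and identifies the trace of the relevant shape operator with $Df$. The second boundary condition $Du(\Omega) = \tilde\Omega$ in (\ref{e1.1}) plays no role in this pointwise identification and is only relevant later for existence and uniqueness. The one point I would verify carefully when specializing is bookkeeping: that the mean curvature in Proposition \ref{prop2.10.1} is taken with respect to the combined metric $g_\tau = \sin\tau\,\delta_0 + \cos\tau\, g_0$, with the parameters $a = \cot\tau$ and $b = \sqrt{|\cot^2\tau - 1|}$ entering $F_\tau$ as in (\ref{e1.101}), so that the constant $\kappa$ obtained here coincides with the prescribed constant mean curvature discussed in the introduction.
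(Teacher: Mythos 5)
Your proof is correct and matches the paper exactly: the paper obtains Corollary \ref{c1.1} by specializing Proposition \ref{prop2.10.1} to $f(x)=\kappa\cdot x+c$ and observing $Df\equiv\kappa$, which is precisely what you do. No issues.
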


By the continuity method, for the second boundary value problem (\ref{e1.1}), we have
\begin{theorem}\label{t1.1}
For $\tau\in\left(0,\frac{\pi}{2}\right]$, if $|\kappa|$ is sufficiently small, then there exist a uniformly convex solution $u\in C^{\infty}(\bar{\Omega})$ and a unique constant $c$ solving (\ref{e1.1}), and $u$ is unique up to a constant.
\end{theorem}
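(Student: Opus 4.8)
The plan is to apply the method of continuity, connecting the target problem (\ref{e1.1}) with a known solvable problem. Since for $\kappa\equiv 0$ the existence and uniqueness of a uniformly convex solution of (\ref{e1.1.1}) are already established (by Brendle--Warren \cite{SM} for $\tau=\frac{\pi}{2}$ and by the second author with Ou, Ye and Chen \cite{HO}, \cite{HRY}, \cite{CHY} for $0<\tau<\frac{\pi}{2}$), the natural path is to set, for $t\in[0,1]$,
\begin{equation}\label{e-plan-path}
\left\{ \begin{aligned}F_\tau(\lambda(D^2 u))&=t\,\kappa\cdot x+c_t,\ \ x\in \Omega, \\
Du(\Omega)&=\tilde{\Omega},
\end{aligned} \right.
\end{equation}
and let $I$ be the set of $t$ for which (\ref{e-plan-path}) admits a uniformly convex solution $u\in C^{\infty}(\bar\Omega)$ together with a constant $c_t$. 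I would show $I$ is nonempty ($0\in I$), open, and closed, so $I=[0,1]$, which yields Theorem \ref{t1.1} at $t=1$. Uniqueness up to a constant follows from a comparison/maximum-principle argument for the concave (for $\tau\in(0,\frac\pi2]$, $F_\tau$ is concave in the relevant range of eigenvalues) operator $F_\tau$, combined with the fact that the second boundary condition $Du(\Omega)=\tilde\Omega$ pins down $c$: integrating, $\int_\Omega \det D^2 u = |\tilde\Omega|$ forces $c$ to be unique, and then two solutions with the same $c$ must differ by a constant.

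Openness is the standard linearization step: at a solution $u_{t_0}$ the linearized operator is a uniformly elliptic second-order operator (no zeroth-order term), and the second boundary condition linearizes to an oblique derivative condition on $\partial\Omega$ determined by the defining function of $\tilde\Omega$; one checks the obliqueness is strict because both domains are uniformly convex, so the linearized problem (modulo the one-dimensional freedom in $(u,c)$, killed by normalizing $u$ at a point and letting $c$ float) is invertible on the appropriate Hölder spaces, and the implicit function theorem gives a neighborhood of $t_0$ in $I$. Closedness is where the real work lies: I need a priori estimates in $C^{2,\alpha}(\bar\Omega)$, uniform in $t$, for uniformly convex solutions of (\ref{e-plan-path}). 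The $C^0$ bound on $u$ (after normalization) and the bound on $|c_t|$ come from the second boundary condition and convexity; the $C^1$ bound is automatic since $Du(\Omega)=\tilde\Omega$ is bounded; the crux is the \emph{a priori} interior and boundary $C^2$ estimate, i.e. two-sided bounds on the eigenvalues $\lambda_i(D^2u)$ (uniform convexity from below, and an upper bound), after which concavity of $F_\tau$ and Evans--Krylov-type arguments with the oblique boundary condition upgrade to $C^{2,\alpha}$ and then, by Schauder theory and bootstrapping, to $C^\infty(\bar\Omega)$.

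The main obstacle is precisely this global second-derivative estimate for the fully nonlinear oblique boundary value problem with the extra inhomogeneity $t\kappa\cdot x$. The strategy I would follow is to adapt the technique already used for the $\kappa\equiv 0$ case: introduce the Legendre transform $u^*$ on $\tilde\Omega$, which (because $Du$ is the sought diffeomorphism) satisfies a dual equation of the same structural type with its own second boundary condition $Du^*(\tilde\Omega)=\Omega$, and exploit the symmetry between $u$ and $u^*$ to get a double-sided bound on $D^2u$; uniform convexity of both $\Omega$ and $\tilde\Omega$ enters decisively here. The term $\kappa\cdot x$ contributes only lower-order perturbations to the linearized and differentiated equations, so for $|\kappa|$ small one expects the estimates to survive with constants depending continuously on $|\kappa|$; making this robust — in particular controlling the boundary $C^2$ estimate via a barrier built from the defining functions of $\Omega$ and $\tilde\Omega$ and absorbing the $|\kappa|$-terms — is the technical heart of the argument, and the smallness hypothesis on $|\kappa|$ in the statement is exactly what guarantees the perturbation can be absorbed and that the solution stays uniformly convex along the whole path.
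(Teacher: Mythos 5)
Your proposal matches the paper's strategy in its essential structure: continuity along the path $F_\tau(\lambda(D^2u))=t\kappa\cdot x+c(t)$ starting from the Brendle--Warren and Huang--Ou/Ye/Chen results at $t=0$, openness via linearization together with the oblique boundary condition $h(Du)=0$, closedness via uniform $C^{2,\alpha}$ estimates, and --- the key technical point --- obtaining the two-sided $D^2u$ bound by coupling estimates for $u$ with those for its Legendre transform $\tilde u$ on $\tilde\Omega$, with $|\kappa|$ small needed precisely to absorb the inhomogeneous terms in the dual barrier computations (both in the strict obliqueness estimate and in the boundary $C^2$ bound on the dual side). One inaccuracy worth flagging: the identity $\int_\Omega\det D^2u\,dx=|\tilde\Omega|$ follows from the second boundary condition alone and holds for every admissible $u$ irrespective of $c$, so it cannot ``force $c$ to be unique''; uniqueness of $c$ (and then of $u$ up to a constant) instead comes from the maximum-principle comparison you also invoke --- at an extremum of $u_1-u_2$ (interior, or on the boundary via strict obliqueness) the two equations and monotonicity of $F_\tau$ force $c_1=c_2$, after which the strong maximum principle gives $u_1-u_2\equiv\mathrm{const}$.
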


\begin{rem}
For $\tau=0$, the problem (\ref{e1.1}) was solved by J. Urbas \cite{JU}, O.C. Schn\"{u}rer and K. Smoczyk \cite{OK}.
\end{rem}
The geometric meaning of this theorem is that if $\Omega$ and $\tilde{\Omega}$ are two uniformly convex bounded domains with smooth boundary in $\mathbb{R}^{n}$, then there exists a diffeomorphism $\psi=Du:\Omega\rightarrow \tilde{\Omega}$ such that
$$\Sigma:=\{(x,\psi(x)):x\in\Omega\}$$
is a Lagrangian submanifold, of which the mean curvature is $\kappa$ in $(\mathbb{R}^n\times\mathbb{R}^n,g_\tau)$.

In another paper, we shall point out by the parabolic method that $c$ means the coefficient of time variable for the translating solution of the parabolic problem corresponding to (\ref{e1.1}), and therefore $c$ can not be given in advance.

Theorem \ref{t1.1} presents an extension of the previous work on $\kappa=0$ done by Brendle-Warren \cite{SM}, Huang \cite{HR}, Huang-Ou \cite{HO}, Huang-Ye \cite{HRY} and Chen-Huang-Ye \cite{CHY}.

The rest of this article is organized as follows. In Section 2, we give the proof of  Proposition \ref{prop2.10.1} and introduce a class of fully nonlinear elliptic equation containing (\ref{e1.1}). Then we present a theorem on the corresponding second boundary value problem, which is a generalization of Theorem \ref{t1.1}. To prove this new theorem, we verify the strictly oblique estimate in Section 3, present the $C^2$ estimate in Section 4 and give the proof of this theorem by the continuity method in Section 5.

Throughout the following, Einstein's convention of summation over repeated indices will be adopted. We denote, for a smooth function $u$,
 $$u_{i}=\dfrac{\partial u}{\partial x_{i}},\ u_{ij}=\dfrac{\partial^{2}u}{\partial x_{i}\partial x_{j}},\ u_{ijk}=\dfrac{\partial^{3}u}{\partial x_{i}\partial x_{j}\partial
x_{k}}, \cdots .$$

\section{A generalization of Theorem \ref{t1.1}}

We begin with the proof of  Proposition \ref{prop2.10.1}.
\begin{proof}
Let $e_i=(0,\cdots,1,\cdots, 0)$ be the $i$-th axis vector in $\mathbb{R}^{n}\times\mathbb{R}^{n}$, $i=1,2,\cdots, 2n$. Then the tangential vector fields of $\overrightarrow{E}:=(x,Du(x))$---which is an $n$-dimensional submanifold of $ (\mathbb{R}^{n}\times\mathbb{R}^{n}, g_\tau)$---are
$$E_i=e_i+u_{ij}e_{n+j},\ \ i=1,\cdots,n.$$
Therefore,
$$\overline{\nabla}_{E_j}^{E_i}=u_{ijk}e_{n+k},$$
where $\overline{\nabla}$ is the Levi-Civita connection of $\mathbb{R}^{2n}$. Then the induced metric on $\overrightarrow{E}$ is given by
$$g_{ij}=\langle E_i,E_j\rangle=\langle e_i+u_{ik}e_{n+k},e_j+u_{jl}e_{n+l}\rangle=\sin\tau(\delta_{ij}+u_{ik}u_{kj})+2\cos\tau u_{ij}.$$
Denote $(g^{ij})=(g_{ij})^{-1}$, then the normal vector fields of $\overrightarrow{E}$ are
$$M_\alpha=M_\alpha^ie_i+\tilde M_\alpha^j e_{n+j},\ \ \alpha=1,\cdots,n.$$
Consequently, the part of the mean curvature vector
$$\overrightarrow{H}=g^{ij}\left(\overline{\nabla}_{E_i}^{E_j}\right)^\bot$$
on $M_\alpha$ is
\begin{equation}\label{equ2.10.2}
\begin{split}
H_\alpha:&=\langle\overrightarrow{H},M_\alpha\rangle\\
&=g^{ij}\langle\overline{\nabla}_{E_i}^{E_j},M_\alpha\rangle\\
&=g^{ij}\langle u_{ijk}e_{n+k},M_\alpha^l e_l+\tilde M_\alpha^p e_{n+p}\rangle\\
&=g^{ij}\left(\cos\tau u_{ijk}M_\alpha^k+\sin\tau u_{ijk}\tilde M_\alpha^k\right).
\end{split}
\end{equation}
Let $\overrightarrow{F}=\left(D f\right)^\bot$, then
\begin{equation}\label{equ2.10.3}
\begin{split}
F_\alpha:&=\langle D f^{\perp},M_\alpha\rangle\\
&=\langle f_le_{n+l},M_\alpha^i e_i+\tilde M_\alpha^j e_{n+j}\rangle\\
&=\cos\tau f_lM_\alpha^l+\sin\tau f_l\tilde M_\alpha^l.
\end{split}
\end{equation}
Comparing (\ref{equ2.10.2}) with (\ref{equ2.10.3}), we know that if for any $k=1,\cdots, n$ we have
\begin{equation}\label{equ2.10.4}
g^{ij}u_{ijk}=f_k,
\end{equation}
then $\overrightarrow{H}=\left(D f\right)^\bot$.

In conclusion, we complete the proof of Proposition \ref{prop2.10.1}.
\end{proof}
In order to prove Theorem \ref{t1.1} in four cases of all together, we reduce it to a more general form.
For the convenience, we introduce some notations.

It is obvious that $F_\tau(\lambda_1, \cdots, \lambda_n)$, $\tau\in\left(0,\frac{\pi}{2}\right]$ is a smooth symmetric function defined on ${\Gamma}^+_n$, where
$$\Gamma^+_n:=\left\{(\lambda_1,\cdots,\lambda_n)\in \mathbb{R}^n:\lambda_i>0,\ i=1,\cdots,n\right\}.$$
By direct calculation, we get
\begin{equation*}
F_\tau(0,\cdots,0)=\left\{ \begin{aligned}
& \frac{n\sqrt{a^2+1}}{2b}\ln\frac{a-b}{a+b},  &&0<\tau<\frac{\pi}{4},\\
& -\sqrt{2}n, &&\tau=\frac{\pi}{4},\\
& \frac{n\sqrt{a^2+1}}{b}\arctan\frac{a-b}{a+b}, \quad \quad &&\frac{\pi}{4}<\tau<\frac{\pi}{2},\\
& 0, \,\,&&\tau=\frac{\pi}{2},
\end{aligned} \right.
\end{equation*}
\begin{equation*}
F_\tau(+\infty,\cdots,+\infty)=\left\{ \begin{aligned}
& 0,  &&0<\tau<\frac{\pi}{4},\\
& 0, &&\tau=\frac{\pi}{4},\\
& \frac{n\pi\sqrt{a^2+1}}{4b}, \quad \quad &&\frac{\pi}{4}<\tau<\frac{\pi}{2},\\
& \frac{n\pi}{2}, \,\,&&\tau=\frac{\pi}{2},
\end{aligned} \right.
\end{equation*}
\begin{equation*}
\frac{\partial F_{\tau}}{\partial \lambda_i}=\left\{ \begin{aligned}
& \frac{\sqrt{a^2+1}}{(\lambda_i+a)^2-b^2}, \ \ \ \  &&0<\tau<\frac{\pi}{4},\\
& \frac{\sqrt{2}}{(1+\lambda_{i})^2}, &&\tau=\frac{\pi}{4},\\
& \frac{\sqrt{a^2+1}}{(\lambda_i+a)^2+b^2}, &&\frac{\pi}{4}<\tau<\frac{\pi}{2},\\
& \frac{1}{1+\lambda^2_{i}},  &&\tau=\frac{\pi}{2},
\end{aligned} \right.
\end{equation*}
and
\begin{equation*}
\frac{\partial^2 F_{\tau}}{\partial \lambda_i \partial \lambda_j}=\left\{ \begin{aligned}
& -\frac{2\sqrt{a^2+1}(\lambda_j+a)\delta_{ij}}{\left[(\lambda_i+a)^2-b^2\right]^2}, \ \ \ \  &&0<\tau<\frac{\pi}{4},\\
& -\frac{2\sqrt{2}\delta_{ij}}{(1+\lambda_{i})^3}, &&\tau=\frac{\pi}{4},\\
& -\frac{2\sqrt{a^2+1}(\lambda_j+a)\delta_{ij}}{\left[(\lambda_i+a)^2+b^2\right]^2}, &&\frac{\pi}{4}<\tau<\frac{\pi}{2},\\
& -\frac{2\lambda_j\delta_{ij}}{\left(1+\lambda^2_{i}\right)^2},  &&\tau=\frac{\pi}{2},
\end{aligned} \right.
\end{equation*}
for $i,j=1,\cdots,n$. Then
\begin{equation}\label{e2.1.1.1}
-\infty<F_\tau(0,\cdots,0)< F_\tau(+\infty,\cdots,+\infty)<+\infty,\ \ \tau\in\left(0,\frac{\pi}{2}\right],
\end{equation}
\begin{equation}\label{e2.1.2}
\frac{\partial F_\tau}{\partial \lambda_i}>0,\ \ 1\leq i\leq n \ \  \text{on}\ \  \Gamma^+_n,
\end{equation}
and
\begin{equation}\label{e2.1.3}
\left(\frac{\partial^2 F_\tau}{\partial \lambda_i\partial \lambda_j}\right)\leq 0 \ \  \text{on}\ \ \Gamma^+_n.
\end{equation}

For any $s_1>0$, $s_2>0$, define
$$\Gamma^{+}_{]s_1,s_2[}=\{(\lambda_{1},\cdots, \lambda_{n})\in {\Gamma}^+_n:0\leq\min_{1\leq i\leq n}\lambda_i\leq s_1,\ \max_{1\leq i\leq n}\lambda_i\geq s_2\}.$$
Then for any $(\lambda_{1},\cdots, \lambda_{n})\in \Gamma^{+}_{]s_{1},s_{2}[}$, we have
\begin{equation}\label{e2.1.4}
\sum_{i=1}^n\frac{\partial F_{\tau}}{\partial \lambda_i}\in\left\{ \begin{aligned}
&\left[\frac{\sqrt{a^2+1}}{(s_{1}+a)^2-b^2}, \frac{n\sqrt{a^2+1}}{a^2-b^2}\right], \ \ \ \  &&0<\tau<\frac{\pi}{4},\\
& \left[\frac{\sqrt{2}}{(1+s_{1})^2},n\sqrt{2}\right], &&\tau=\frac{\pi}{4},\\
& \left[\frac{\sqrt{a^2+1}}{(s_{1}+a)^2+b^2},\frac{n\sqrt{a^2+1}}{a^2+b^2}\right], &&\frac{\pi}{4}<\tau<\frac{\pi}{2},\\
&\left[\frac{1}{1+s_{1}^2} ,n\right],  &&\tau=\frac{\pi}{2},
\end{aligned} \right.
\end{equation}
and
\begin{equation}\label{e2.1.6}
\sum_{i=1}^n\frac{\partial F_{\tau}}{\partial \lambda_i}\lambda_i^2\in\left\{ \begin{aligned}
&\left[\frac{s_2^2\sqrt{a^2+1}}{(s_2+a)^2-b^2}, n\sqrt{a^2+1}\right], \ \ \ \  &&0<\tau<\frac{\pi}{4},\\
&\left[\frac{s_2^2\sqrt{2}}{(1+s_2)^2}, n\sqrt{2}\right], &&\tau=\frac{\pi}{4},\\
&\left[\frac{s_2^2\sqrt{a^2+1}}{(s_2+a)^2+b^2}, n\sqrt{a^2+1}\right], &&\frac{\pi}{4}<\tau<\frac{\pi}{2},\\
&\left[\frac{s_2^2}{1+s_2^2}, n\right],  &&\tau=\frac{\pi}{2}.
\end{aligned} \right.
\end{equation}

For any $(\mu_1,\cdots,\mu_n)\in {\Gamma}^+_n$, denote
$$\lambda_i=\frac{1}{\mu_i},\ \ 1\leq i\leq n,$$
and
$$\tilde F_\tau (\mu_1,\cdots,\mu_n):=-F_\tau(\lambda_1,\cdots,\lambda_n).$$
Then
\begin{equation*}
\frac{\partial \tilde F_\tau}{\partial \mu_{i}}=\lambda^{2}_{i}\frac{\partial F_\tau}{\partial \lambda_{i}},\quad \mu^{2}_{i}\frac{\partial \tilde F_\tau}{\partial \mu_{i}}=\frac{\partial F_\tau}{\partial \lambda_{i}},
\end{equation*}
and
\begin{equation*}
\begin{aligned}
\frac{\partial^2 \tilde F_{\tau}}{\partial \mu_i \partial \mu_j}&&&=-\lambda^3_i\left(\lambda_i\frac{\partial^2 F_\tau}{\partial \lambda^2_i}+2\frac{\partial F_\tau}{\partial \lambda_i}\right)\delta_{ij}\\
&&&=\left\{ \begin{aligned}
& -\frac{2\sqrt{a^2+1}(\mu_{i}+a)}{\left[(1+a\mu_{i})^2-(b\mu_{i})^2\right]^2}\delta_{ij}, \ \ \ \  &&0<\tau<\frac{\pi}{4},\\
& -\frac{2\sqrt{2}\delta_{ij}}{(1+\mu_{i})^3}, &&\tau=\frac{\pi}{4},\\
& -\frac{2\sqrt{a^2+1}(\mu_i+a)}{\left[(1+a\mu_i)^2+(b\mu_{i})^2\right]^2}\delta_{ij}, &&\frac{\pi}{4}<\tau<\frac{\pi}{2},\\
& -\frac{2\mu_i\delta_{ij}}{\left(1+\mu^2_{i}\right)^2},  &&\tau=\frac{\pi}{2}.
\end{aligned} \right.
\end{aligned}
\end{equation*}
Therefore, we have
\begin{equation*}
\frac{\partial \tilde F_\tau}{\partial \mu_i}>0,\ \ 1\leq i\leq n\ \  \text{on}\ \  \Gamma^+_n,
\end{equation*}
and
\begin{equation}\label{e2.1.11}
\left(\frac{\partial^2 \tilde F_\tau}{\partial \mu_i\partial \mu_j}\right)\leq 0\ \  \text{on}\ \  \Gamma^+_n.
\end{equation}

Motivated by (\ref{e2.1.1.1})-(\ref{e2.1.11}), in order to prove Theorem \ref{t1.1}, we introduce a class of nonlinear functions containing $F_\tau(\lambda)$, $\tau\in (0,\frac{\pi}{2}]$. Let $F(\lambda_{1},\cdots, \lambda_{n})$ be a smooth symmetric function defined on ${\Gamma}^+_n$, and satisfy
\begin{equation}\label{e1.2.0}
-\infty<F(0,\cdots,0)<F(+\infty,\cdots,+\infty)<+\infty,
\end{equation}
\begin{equation}\label{e1.2}
\frac{\partial F}{\partial \lambda_i}>0,\ \ 1\leq i\leq n\ \  \text{on}\ \  \Gamma^+_n,
\end{equation}
and
\begin{equation}\label{e1.2.1}
\left(\frac{\partial^2 F}{\partial \lambda_i\partial \lambda_j}\right)\leq 0\ \  \text{on}\ \  \Gamma^+_n.
\end{equation}
For any $(\mu_1,\cdots,\mu_n)\in {\Gamma}^+_n$, denote
$$\lambda_i=\frac{1}{\mu_i},\ \ 1\leq i\leq n,$$
and
$$\tilde F (\mu_1,\cdots,\mu_n):=-F(\lambda_1,\cdots,\lambda_n).$$
Assume that
\begin{equation}\label{e1.2.2}
\left(\frac{\partial^2 \tilde F}{\partial \mu_i\partial \mu_j}\right)\leq 0\ \  \text{on}\ \  \Gamma^+_n.
\end{equation}
In addition, for $s_1,s_2>0$, we assume that there exist positive constants $\Lambda_1$ and $\Lambda_2$, depending  on $s_1$ and $s_2$, such that for any $(\lambda_{1},\cdots, \lambda_{n})\in \Gamma^{+}_{]s_1,s_2[}$,
\begin{equation}\label{e1.3}
 \Lambda_1\leq\sum^{n}_{i=1}\frac{\partial F}{\partial \lambda_{i}}\leq \Lambda_2,
\end{equation}
and
\begin{equation}\label{e1.4}
  \Lambda_1\leq\sum^{n}_{i=1}\frac{\partial F}{\partial \lambda_{i}}\lambda^{2}_{i}\leq \Lambda_2.
\end{equation}

\begin{rem}
Since
$$\frac{\partial^2 \tilde F}{\partial \mu_i\partial \mu_j}=-\frac{\partial^2 F}{\partial \lambda_i\partial \lambda_j}\lambda_i^2\lambda_j^2-2\lambda_i^3\delta_{ij}\frac{\partial F}{\partial \lambda_i},$$
we cannot deduce (\ref{e1.2.2}) from (\ref{e1.2}) and (\ref{e1.2.1}).
\end{rem}

By the discussion above, we have
\begin{Proposition}\label{prop2.10.2}
The operator $F_\tau(\lambda)$, $\tau\in (0,\frac{\pi}{2}]$ satisfies the structure conditions (\ref{e1.2.0})-(\ref{e1.4}).
\end{Proposition}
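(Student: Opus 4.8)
The plan is simply to read off each of the six structure conditions from the explicit formulas for $F_\tau$ and for its first and second derivatives that have already been displayed above; no new computation is required, only bookkeeping.

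First I would note that $F_\tau$ is smooth and symmetric on $\Gamma^+_n$: in each of the four ranges of $\tau$ the formula \eqref{e1.101} exhibits $F_\tau$ as a symmetric sum over $i$ of a single smooth function of $\lambda_i$ --- namely $\ln$, $\ln\frac{\,\cdot\,+a-b}{\,\cdot\,+a+b}$, $\frac{1}{1+\,\cdot\,}$, or $\arctan$ --- and, since $a=\cot\tau\ge 0$ and (when $0<\tau<\frac{\pi}{4}$) $a>b$, the relevant arguments $\lambda_i+a\pm b$ and $1+\lambda_i$ stay strictly positive for $\lambda_i>0$, so each summand is smooth on $(0,\infty)$.

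Next I would match the conditions one by one. Condition \eqref{e1.2.0} is exactly \eqref{e2.1.1.1}, obtained from the displayed values of $F_\tau(0,\dots,0)$ and $F_\tau(+\infty,\dots,+\infty)$. Condition \eqref{e1.2} is \eqref{e2.1.2}, immediate from the formula for $\partial F_\tau/\partial\lambda_i$, which is manifestly positive on $\Gamma^+_n$. Condition \eqref{e1.2.1} is \eqref{e2.1.3}: the matrix $(\partial^2 F_\tau/\partial\lambda_i\partial\lambda_j)$ is diagonal, and since $\lambda_i>0$ and $a\ge 0$ each diagonal entry in the displayed formula is strictly negative, so the matrix is negative semidefinite. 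Condition \eqref{e1.2.2} is \eqref{e2.1.11}, likewise read off from the displayed formula for $\partial^2\tilde F_\tau/\partial\mu_i\partial\mu_j$, which is diagonal with strictly negative entries on $\Gamma^+_n$; as the remark preceding the statement stresses, this concavity of $\tilde F_\tau$ does not follow formally from \eqref{e1.2}--\eqref{e1.2.1}, so it is precisely this explicit computation that carries the point.

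Finally, for \eqref{e1.3} and \eqref{e1.4} I would invoke \eqref{e2.1.4} and \eqref{e2.1.6}: for $(\lambda_1,\dots,\lambda_n)\in\Gamma^{+}_{]s_1,s_2[}$ --- so that some $\lambda_i\le s_1$ and some $\lambda_i\ge s_2$ --- the two-sided bounds there show that $\sum_i\partial F_\tau/\partial\lambda_i$ and $\sum_i(\partial F_\tau/\partial\lambda_i)\lambda_i^2$ each lie in an explicit interval whose endpoints depend only on $s_1$, $s_2$ and the fixed data $\tau$, $n$. Taking $\Lambda_1$ to be the minimum and $\Lambda_2$ the maximum of the four interval endpoints occurring in \eqref{e2.1.4} and \eqref{e2.1.6} yields positive constants, depending only on $s_1$ and $s_2$, for which \eqref{e1.3} and \eqref{e1.4} hold simultaneously at every point of $\Gamma^{+}_{]s_1,s_2[}$. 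There is no real obstacle here; the only point worth a moment's care is that \eqref{e1.3}--\eqref{e1.4} ask for constants common to the two inequalities and uniform over $\Gamma^{+}_{]s_1,s_2[}$, which is exactly what the uniform two-sided estimates \eqref{e2.1.4}, \eqref{e2.1.6} supply.
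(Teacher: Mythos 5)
Your proposal is correct and matches the paper's approach exactly: the paper's entire ``proof'' of Proposition~\ref{prop2.10.2} is the single clause ``By the discussion above, we have\dots'', referring to the displayed identities \eqref{e2.1.1.1}--\eqref{e2.1.11} and the interval bounds \eqref{e2.1.4}, \eqref{e2.1.6}, which is precisely the bookkeeping you carry out. Your remarks on smoothness (positivity of $\lambda_i+a\pm b$ when $0<\tau<\pi/4$ thanks to $a>b$) and on the choice of a single pair $\Lambda_1,\Lambda_2$ serving both \eqref{e1.3} and \eqref{e1.4} are small elaborations the paper leaves implicit but do not constitute a different route.
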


For $f(x)\in C^{\infty}(\bar{\Omega})$, we define
$$\mathop{\operatorname{osc}}_{\bar{\Omega}}(f):=\max_{x,y\in\bar{\Omega}}|f(x)-f(y)|,$$
and
$${\mathscr{A}}_\delta:=\left\{f(x)\in C^{\infty}(\bar{\Omega}): f\ \text{is concave},\ \mathop{\operatorname{osc}}_{\bar{\Omega}}(f) \leq\delta \right\}.$$
 The constant
 $\delta$ is any positive constant satisfying
 $$\delta<\min\left\{F(+\infty,\cdots,+\infty)-F(\Theta_0,+\infty,\cdots,+\infty),F(0,\cdots,0,\Theta_0)-F(0,\cdots,0)\right\},$$
where $\Theta_{0}:=\left(\frac{|\tilde{\Omega}|}{|\Omega|}\right)^{1/n}$.

Considering the more general second boundary value problem
\begin{equation}\label{e1.8}
\left\{ \begin{aligned}
F\left(\lambda (D^2u)\right)&=f(x)+c,\ \ x\in \Omega, \\
Du(\Omega)&=\tilde{\Omega},
\end{aligned} \right.
\end{equation}
we claim that
\begin{theorem}\label{t1.2}
Let $F$ satisfy the structure conditions (\ref{e1.2.0})-(\ref{e1.4}) and $f\in {\mathscr{A}}_\delta$. If $|Df|$ is sufficiently small, then there exist a uniformly convex solution $u\in  C^{\infty}(\bar{\Omega})$ and a unique constant $c$ solving (\ref{e1.8}), and $u$ is unique up to a constant.
\end{theorem}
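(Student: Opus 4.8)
The plan is to establish Theorem~\ref{t1.2} by the method of continuity, deforming from a problem with an explicit (or already known) solution to \eqref{e1.8}. After normalizing solutions by, say, $u(x_0)=0$ at a fixed $x_0\in\Omega$ to remove the additive constant, consider for $t\in[0,1]$ the family
\begin{equation*}
\left\{\begin{aligned}
F\bigl(\lambda(D^2u_t)\bigr)&=tf(x)+c_t,\quad x\in\Omega,\\
Du_t(\Omega)&=\tilde\Omega.
\end{aligned}\right.
\end{equation*}
Since $f\in\mathscr{A}_\delta$ forces $tf$ to be concave with $\operatorname{osc}_{\bar\Omega}(tf)\le t\delta\le\delta$ and $|D(tf)|\le|Df|$, the whole family obeys the hypotheses of the theorem. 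The anchor $t=0$ is the prescribed-constant problem $F(\lambda(D^2u_0))=c_0$, $Du_0(\Omega)=\tilde\Omega$, a minimal-Lagrangian-diffeomorphism-type problem: for $F=F_\tau$ its solvability is the content of \cite{SM}, \cite{HR}, \cite{HO}, \cite{HRY}, \cite{CHY}, and for a general $F$ in our class it follows from the same a priori estimates (the constant right-hand side being the easiest case), after, if needed, a preliminary deformation of $\tilde\Omega$ to a ball where $u_0=\tfrac{\Theta_0}{2}|x-x_0|^2$ solves the problem explicitly. Hence the set $T:=\{t\in[0,1]:\text{the normalized problem at }t\text{ has a uniformly convex solution }u_t\in C^{4,\alpha}(\bar\Omega)\}$ is nonempty; I would prove it is both open and closed, whence $T=[0,1]$, and $t=1$ together with a bootstrap gives the asserted $u\in C^\infty(\bar\Omega)$ and $c$.

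\emph{Openness.} At a solution $(u_t,c_t)$, linearizing $F(\lambda(D^2u))$ gives the operator $Lv=F^{ij}v_{ij}$ with $F^{ij}=\partial F/\partial u_{ij}$, positive definite by \eqref{e1.2}; differentiating the defining function of $\partial\tilde\Omega$ along $Du_t$ yields a linear boundary condition that is \emph{strictly oblique} — this obliqueness estimate, available because $\Omega$ and $\tilde\Omega$ are uniformly convex, is exactly what Section~3 provides. The free constant $c_t$ enters as an extra scalar unknown which absorbs the one-dimensional cokernel (the constants) of the linear oblique problem, so the map $(v,\dot c)\mapsto(Lv-\dot c,\ \text{linearized boundary operator})$ is an isomorphism between the relevant H\"older spaces modulo constants, and the implicit function theorem produces a neighbourhood of $t$ inside $T$.

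\emph{Closedness.} This is the heart of the matter. The $C^1$ bound is free, since $Du_t(\bar\Omega)=\overline{\tilde\Omega}$ is a fixed bounded set, so $|Du_t|\le\sup_{\tilde\Omega}|y|$. Changing variables gives $\frac{1}{|\Omega|}\int_\Omega\det D^2u_t\,dx=|\tilde\Omega|/|\Omega|=\Theta_0^n$; combining this with the monotonicity and concavity of $F$ in $\lambda$, together with $\operatorname{osc}(tf)\le\delta$ and the precise cutoff defining $\delta$ (in terms of $F(\Theta_0,+\infty,\dots)$ and $F(0,\dots,0,\Theta_0)$), confines $c_t$ to a fixed compact interval and keeps the eigenvalues of $D^2u_t$ bounded above; the matching lower bound $\lambda_i(D^2u_t)\ge s_1>0$ comes from running the same argument for the Legendre transform $u_t^{*}$, whose Hessian eigenvalues are the $1/\lambda_i$ and which satisfies the equation built from $\tilde F$ — this is precisely why hypothesis \eqref{e1.2.2} is imposed separately. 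Granting these, the equation is uniformly elliptic with $\lambda(D^2u_t)$ in a fixed $\Gamma^{+}_{]s_1,s_2[}$, and the genuine global second-derivative bound $\sup_{\bar\Omega}|D^2u_t|\le C$ — in particular the mixed tangential--normal and the double-normal estimates on $\partial\Omega$ — is the content of Section~4, using \eqref{e1.2.1}, the quantitative bounds \eqref{e1.3}--\eqref{e1.4}, strict obliqueness, and the geometry of the two domains. Since $F$ is concave in $D^2u$ by \eqref{e1.2.1}, Evans--Krylov together with its oblique-boundary version yields a uniform $C^{2,\alpha}(\bar\Omega)$ estimate, and Schauder bootstrapping gives uniform $C^\infty$ bounds; hence $T$ is closed.

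\emph{Uniqueness, and the main difficulty.} If $(u,c)$ and $(\bar u,\bar c)$ are two normalized solutions, then $w=u-\bar u$ satisfies a linear uniformly elliptic equation (write the difference of the two equations as an integral of $F^{ij}$ along the segment joining the Hessians) with an oblique boundary condition forced by $Du(\Omega)=D\bar u(\Omega)=\tilde\Omega$; the maximum principle for oblique problems then gives $w\equiv\text{const}$ and $c=\bar c$, as in \cite{SM} and \cite{JU}. The step I expect to be hardest is the a priori $C^2$ estimate up to the boundary — the boundary second-derivative bounds for the oblique problem under only the structure conditions \eqref{e1.2.0}--\eqref{e1.4} — combined with the requirement that $\lambda(D^2u_t)$ stay in a fixed compact subset of $\Gamma^+_n$ uniformly along the whole path (the upper bound from concavity of $F$, the lower bound from concavity of $\tilde F$); within Section~5 itself the delicate point is the bookkeeping of the free constant $c$ in the implicit function theorem for the oblique linearized problem.
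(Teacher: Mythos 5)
Your proposal is correct and follows essentially the same route as the paper: continuity method starting from $t=0$ (solved by Huang--Ou / Huang--Ye), openness via linearization with the free constant $c$ absorbing the one-dimensional cokernel of the oblique linear problem, closedness via the a priori estimates of Sections~3--4 (eigenvalue confinement in $\Gamma^{+}_{]s_1,s_2[}$ via the integral identity and Lemma~3.1, the Legendre transform argument using \eqref{e1.2.2} for the lower Hessian bound, boundary $C^2$ estimates, Evans--Krylov / Lieberman--Trudinger), and uniqueness as in \cite{SM}. The only cosmetic deviations are your normalization $u(x_0)=0$ versus the paper's $\int_\Omega u=0$, and your optional suggestion of first deforming $\tilde\Omega$ to a ball for the anchor case $t=0$ (which as stated would also require deforming $\Omega$ to a ball so that $Du_0=\Theta_0(x-x_0)$ actually maps $\Omega$ onto $\tilde\Omega$; the paper avoids this by citing the known result directly).
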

\begin{rem}\label{r2.4}
 It's not hard to deduce that if $|Df|$ is sufficiently small and $f$ is concave, then $f\in {\mathscr{A}}_\delta$.
\end{rem}
\begin{rem}\label{r2.5}
Let $n\geq 2$,  $f=\kappa_0 x_{1}$, $\Omega=\{x\in\mathbb{R}^n:|x|<1\}$, and $\tilde{\Omega}$ be a uniformly convex domain in $\mathbb{R}^{n}$. If $(u,c)$ satisfies
\begin{equation*}
\left\{ \begin{aligned}
\sum^n_{i=1}\arctan\lambda_{i}&=f(x)+c,\ \ x\in \Omega, \\
Du(\Omega)&=\tilde{\Omega}.
\end{aligned} \right.
\end{equation*}
Then $$|Df|=|\kappa_0|=\frac{1}{n}\mathop{\operatorname{osc}}_{\bar{\Omega}}(f+c)=\frac{1}{n}
\mathop{\operatorname{osc}}_{\bar{\Omega}}\left(\sum^n_{i=1}\arctan\lambda_{i}\right)<\frac{\pi}{2}.$$
Therefore, the range of $|Df|$  should be limited for the solvability of the problem  (\ref{e1.8}).
\end{rem}
By Proposition \ref{prop2.10.2} we know that, once Theorem \ref{t1.2} is proved, Theorem \ref{t1.1} holds.

In the next three sections, we are going to prove Theorem \ref{t1.2} through the continuity method, which is based on the strictly oblique estimate and the $C^2$ estimate.

\section{The strict obliqueness estimate}

To prove the strict obliqueness estimate, first we need
\begin{lemma}\label{l2.1}
Let $\lambda_{1}(x)$, $\cdots$, $\lambda_{n}(x)$ be the eigenvalues of $D^{2}u$ at $x$. Suppose that (\ref{e1.2.0}) and (\ref{e1.2}) hold, if $\mathop{\operatorname{osc}}_{\bar{\Omega}}(f) \leq\delta$ and $u\in  C^{\infty}(\bar{\Omega})$ is a uniformly convex solution of (\ref{e1.8}), then there exists $\mu>0$ and $\omega>0$ depending only on $F$, $\Theta_{0}$ and $\delta$ such that
\begin{equation}\label{e2.2}
\min_{1\leq i\leq n}\lambda_i(x)\leq\mu,\ \  \max_{1\leq i\leq n}\lambda_i(x)\geq\omega.
\end{equation}
\end{lemma}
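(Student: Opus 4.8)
The plan is to exploit the two sources of rigidity in \eqref{e1.8}: the equation $F(\lambda(D^2u))=f+c$ together with the monotonicity of $F$ and the finiteness of its boundary values (conditions \eqref{e1.2.0}--\eqref{e1.2}), and the second boundary condition $Du(\Omega)=\tilde\Omega$, which fixes the average of $\det D^2u$. First I would note that, $u$ being uniformly convex, $Du$ is strictly monotone hence injective, and since $Du(\Omega)=\tilde\Omega$ it is a diffeomorphism of $\Omega$ onto $\tilde\Omega$; the area formula then gives $\int_\Omega\det D^2u\,dx=|\tilde\Omega|=\Theta_0^n|\Omega|$. By the intermediate value theorem there is $x_0\in\bar\Omega$ with $\det D^2u(x_0)=\Theta_0^n$, and the AM--GM inequality applied at $x_0$ yields
\[
\min_{1\le i\le n}\lambda_i(x_0)\le\Theta_0\le\max_{1\le i\le n}\lambda_i(x_0).
\]

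Next I would pin down the constant $c$ from both sides by evaluating the equation at $x_0$ and using that $F$ is increasing in each argument. Replacing the smallest eigenvalue at $x_0$ by $\Theta_0$ and the remaining ones by $+\infty$ gives $f(x_0)+c=F(\lambda(x_0))\le F(\Theta_0,+\infty,\dots,+\infty)$, hence $c\le F(\Theta_0,+\infty,\dots,+\infty)-\min_{\bar\Omega}f$; replacing all but the largest eigenvalue by $0$ and the largest by $\Theta_0$ gives $c\ge F(0,\dots,0,\Theta_0)-\max_{\bar\Omega}f$. Combining these two bounds with $\operatorname{osc}_{\bar\Omega}(f)\le\delta$, I get for every $x\in\bar\Omega$
\[
F(0,\dots,0,\Theta_0)-\delta\le F(\lambda(x))=f(x)+c\le F(\Theta_0,+\infty,\dots,+\infty)+\delta .
\]
By the choice of $\delta$ the right-hand side lies strictly below $F(+\infty,\dots,+\infty)$ and the left-hand side strictly above $F(0,\dots,0)$, with quantitative gaps $\eta_1:=F(+\infty,\dots,+\infty)-F(\Theta_0,+\infty,\dots,+\infty)-\delta>0$ and $\eta_2:=F(0,\dots,0,\Theta_0)-F(0,\dots,0)-\delta>0$.

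Finally I would convert the gaps $\eta_1,\eta_2$ into the eigenvalue bounds. Since $t\mapsto F(t,\dots,t)$ increases to $F(+\infty,\dots,+\infty)$ as $t\to+\infty$ and decreases to $F(0,\dots,0)$ as $t\to0^+$, I can fix $\mu>0$ with $F(\mu,\dots,\mu)>F(+\infty,\dots,+\infty)-\eta_1$ and $\omega>0$ with $F(\omega,\dots,\omega)<F(0,\dots,0)+\eta_2$; both depend only on $F$, $\Theta_0$, $\delta$. If at some $x$ one had $\min_i\lambda_i(x)>\mu$, then every $\lambda_i(x)>\mu$ and monotonicity gives $F(\lambda(x))>F(\mu,\dots,\mu)>F(+\infty,\dots,+\infty)-\eta_1\ge F(\lambda(x))$, a contradiction; hence $\min_i\lambda_i(x)\le\mu$ for all $x$. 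The bound $\max_i\lambda_i(x)\ge\omega$ follows symmetrically from the lower estimate, using that $\max_i\lambda_i(x)<\omega$ would force $F(\lambda(x))<F(\omega,\dots,\omega)<F(0,\dots,0)+\eta_2\le F(\lambda(x))$.

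The computations are routine; the only points needing care are the first step --- checking that $Du$ is genuinely a diffeomorphism onto $\tilde\Omega$ so that the area formula applies and the average of $\det D^2u$ equals $\Theta_0^n$ --- and the bookkeeping with the one-sided limits $F(\Theta_0,+\infty,\dots,+\infty)$ and $F(0,\dots,0,\Theta_0)$, which must be read as the appropriate monotone limits of $F$ and are finite and strictly separated from $F(+\infty,\dots,+\infty)$ and $F(0,\dots,0)$ by the strict monotonicity \eqref{e1.2}. I expect no further obstacle.
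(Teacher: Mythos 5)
Your proof is correct and follows essentially the same route as the paper: use $Du(\Omega)=\tilde\Omega$ and the mean-value theorem to find a point $x_0$ with $\det D^2u(x_0)=\Theta_0^n$ (hence some eigenvalue $\le\Theta_0$ and some eigenvalue $\ge\Theta_0$ there), then compare $F(\lambda(x))$ at an arbitrary $x$ with $F(\lambda(x_0))$ using $\operatorname{osc}_{\bar\Omega}(f)\le\delta$ and monotonicity of $F$, and finally invert $t\mapsto F(t,\dots,t)$ to extract $\mu$ and $\omega$. The paper subtracts the equation at $x$ and at $x_0$ directly, whereas you first bound $c$ and then plug back in; these are algebraically equivalent. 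One small nit: the inequality $\min_i\lambda_i(x_0)\le\Theta_0\le\max_i\lambda_i(x_0)$ is not AM--GM but just the trivial fact that the geometric mean lies between the smallest and largest factor.
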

\begin{proof}
By $Du(\Omega)=\tilde{\Omega}$, we have
$$\int_{\Omega}\det D^{2}u(x)dx=|\tilde{\Omega}|.$$
Then we can find $\bar x\in \bar{\Omega}$ such that
$$ \prod_{i=1}^n\lambda_i(\bar x)=\det D^{2}u(\bar x)=\frac{|\tilde{\Omega}|}{|\Omega|}=\Theta^{n}_{0}.$$
Therefore,
$$ \min_{1\leq i\leq n}\lambda_i(\bar x)\leq\Theta_{0}\leq\max_{1\leq i\leq n}\lambda_i(\bar x).$$
By (\ref{e1.8}) and condition (\ref{e1.2}), we obtain
\begin{equation*}
\begin{aligned}
F\left(\min_{1\leq i\leq n}\lambda_i(x),\cdots, \min_{1\leq i\leq n}\lambda_i(x)\right)&\leq F\left(\lambda_1(x),\cdots,\lambda_n(x)\right)\\
&=F\left(\lambda_1(\bar x),\cdots,\lambda_n(\bar x)\right)+f(x)-f(\bar x)\\
&\leq F\left(\Theta_0,+\infty,\cdots,+\infty\right)+\mathop{\operatorname{osc}}_{\bar{\Omega}}(f)\\
&\leq F\left(\Theta_0,+\infty,\cdots,+\infty\right)+\delta\\
&< F\left(+\infty,\cdots,+\infty\right),
\end{aligned}
\end{equation*}
and
\begin{equation*}
\begin{aligned}
F\left(\max_{1\leq i\leq n}\lambda_i(x),\cdots, \max_{1\leq i\leq n}\lambda_i(x)\right)&\geq F\left(\lambda_1(x),\cdots,\lambda_n(x)\right)\\
&=F\left(\lambda_1(\bar x),\cdots,\lambda_n(\bar x)\right)+f(x)-f(\bar x)\\
&\geq F(0,\cdots,0,\Theta_0)-\mathop{\operatorname{osc}}_{\bar{\Omega}}(f)\\
&\geq F(0,\cdots,0,\Theta_0)-\delta\\
&> F(0,\cdots,0).
\end{aligned}
\end{equation*}
By the monotonicity of $F$ and condition (\ref{e1.2.0}), we get the desired result.
\end{proof}

By Lemma \ref{l2.1}, the points $(\lambda_{1},\cdots, \lambda_{n})$ are always in $ \Gamma^{+}_{]\mu,\omega[}$ under the problem (\ref{e1.8}). Then there exist $\Lambda_1>0$ and $\Lambda_2>0$ depending only on $F$, $\Theta_{0}$ and $\delta$, such that $F$ satisfies the structure conditions (\ref{e1.3}) and (\ref{e1.4}). In the following, we always assume that $\Lambda_1$ and $\Lambda_2$ are universal constants depending only on the known data.

For technical needs below, we introduce the Legendre transformation of $u$. For any $x\in \mathbb{R}^n$, define
\begin{equation*}
\tilde{x}_{i}:=\frac{\partial u}{\partial x_{i}}(x),\ \ i=1,2,\cdots,n,
\end{equation*}
and
$$\tilde u(\tilde{x}_{1},\cdots,\tilde{x}_{n}):=\sum_{i=1}^{n}x_{i}\frac{\partial u}{\partial x_{i}}(x)-u(x).$$
In terms of $\tilde{x}_{1}$, $\cdots$, $\tilde{x}_{n}$ and $\tilde u(\tilde{x}_{1},\cdots,\tilde{x}_{n})$, we can easily check that
$$\left(\frac{\partial^{2} \tilde{u}}{\partial \tilde{x}_{i}\partial
\tilde{x}_{j}}\right)=\left(\frac{\partial^{2} u}{\partial x_{i}
\partial x_{j}}\right)^{-1}.$$
Let $\mu_{1}$, $\cdots$, $\mu_{n}$ be the eigenvalues of $D^{2}\tilde{u}$ at $\tilde{x}=D u(x)$. We denote
$$\mu_{i}=\lambda^{-1}_{i},\ \ i=1,2,\cdots,n.$$
Then
$$\frac{\partial \tilde F}{\partial \mu_{i}}=\lambda^{2}_{i}\frac{\partial F}{\partial \lambda_{i}},\quad \mu^{2}_{i}\frac{\partial \tilde F}{\partial \mu_{i}}=\frac{\partial F}{\partial \lambda_{i}}.$$
Moreover, it follows from (\ref{e1.8}) that
\begin{equation}\label{e2.7}
\left\{ \begin{aligned}\tilde{F}\left(\lambda(D^2\tilde{u})\right) &=-f(D\tilde{u})-c,\ \  \tilde{x}\in \tilde{\Omega}, \\
D\tilde{u}(\tilde{\Omega})&=\Omega.
\end{aligned} \right.
\end{equation}

\begin{rem}\label{r2.2}
By Lemma \ref{l2.1}, if $u$ is a smooth uniformly convex solution of (\ref{e1.8}), then the eigenvalues of $D^{2}u$ and $D^{2}\tilde{u}$ must be in $\Gamma^{+}_{]\mu,\omega[}$ and $\Gamma^{+}_{]\omega^{-1},\mu^{-1}[}$ respectively. Therefore, $\tilde F$ also satisfies the structure conditions (\ref{e1.3}) and (\ref{e1.4}).
\end{rem}

Next, we will carry out the strictly oblique estimate. Let $\mathscr{P}_n$ be the set of positive definite symmetric $n\times n$ matrices, and $\lambda_{1}(A)$, $\cdots$, $\lambda_{n}(A)$ be the eigenvalues of $A$.  For $A=(a_{ij})\in \mathscr{P}_n$, denote
$$F[A]:=F\left(\lambda_{1}(A),\cdots, \lambda_{n}(A)\right)$$
and
$$\left(a^{ij}\right)=(a_{ij})^{-1},\,\,\,
F^{ij}=\frac{\partial F}{\partial a_{ij}},\,\,\,F^{ij,rs}=\frac{\partial^{2} F}{\partial a_{ij}\partial a_{rs}}.$$

\begin{deff}
A smooth function $h:\mathbb{R}^n\rightarrow\mathbb{R}$ is called the defining function of $\tilde{\Omega}$ if
$$\tilde{\Omega}=\{p\in\mathbb{R}^{n} : h(p)>0\},\quad |Dh|_{{\partial\tilde{\Omega}}}=1,$$
and there exists $\theta>0$ such that for any $p=(p_{1},\cdots, p_{n})\in \tilde{\Omega}$ and $\xi=(\xi_{1}, \cdots, \xi_{n})\in \mathbb{R}^{n}$,
$$\frac{\partial^{2}h}{\partial p_{i}\partial p_{j}}\xi_{i}\xi_{j}\leq -\theta|\xi|^{2}.$$
\end{deff}

Therefore, the diffeomorphism condition $Du(\Omega)=\tilde{\Omega}$ in (\ref{e1.8}) is equivalent to
\begin{equation}\label{e2.10}
 h(Du)=0,\ \ x\in \partial\Omega.
\end{equation}
Then (\ref{e1.8}) can be rewritten as
\begin{equation}\label{e2.101}
\left\{ \begin{aligned}
F\left[D^2 u\right]&=f(x)+c,\ \ &&x\in \Omega, \\
h(Du)&=0,             &&x\in\partial \Omega.
\end{aligned} \right.
\end{equation}
This is an oblique boundary value problem of second order fully nonlinear elliptic equation. We also denote $\beta=(\beta^{1}, \cdots, \beta^{n})$ with $\beta^{i}:=h_{p_{i}}(Du)$, and $\nu=(\nu_{1},\cdots,\nu_{n})$ as the unit inward normal vector at $x\in\partial\Omega$.
The expression of the inner product  is
\begin{equation*}
\langle\beta, \nu\rangle=\beta^{i}\nu_{i}.
\end{equation*}
\begin{lemma}\label{l1.3}(See J. Urbas \cite{JU}.) Let $\nu=(\nu_{1},\nu_{2}, \cdots,\nu_{n})$ be the unit inward normal vector of $\partial\Omega$. If $u\in C^{2}(\bar{\Omega})$  with $D^{2}u\geq0$,
then there holds $h_{p_{k}}(Du)\nu_{k}\geq0$.
\end{lemma}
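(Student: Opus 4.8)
The plan is to exploit the fact that the oblique boundary condition $h(Du)=0$ holds \emph{identically} along $\partial\Omega$, so that the composite function $w(x):=h(Du(x))$ on $\bar\Omega$ has, at a boundary point, a gradient forced to point in the normal direction. First I would record the two facts available for the solutions under consideration: $h(Du)=0$ on $\partial\Omega$ and $Du(\Omega)=\tilde\Omega=\{h>0\}$ (the second boundary condition). These say $w\equiv0$ on $\partial\Omega$ and $w>0$ in $\Omega$, so every point of $\partial\Omega$ is a minimum point of $w$ over $\bar\Omega$.

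Now fix $x_0\in\partial\Omega$. Since $w$ vanishes on all of $\partial\Omega$, its tangential derivatives at $x_0$ vanish, hence $Dw(x_0)$ is parallel to the normal line at $x_0$; and since $w\ge w(x_0)=0$ on $\bar\Omega$, the derivative of $w$ at $x_0$ in the inward direction $\nu$ is nonnegative. Therefore $Dw(x_0)=\rho\,\nu$ for some $\rho\ge0$. On the other hand, differentiating $w=h(Du)$ gives $D_jw=h_{p_k}(Du)\,u_{kj}$, i.e. $Dw(x_0)=D^2u(x_0)\,\beta$ with $\beta^i=h_{p_i}(Du(x_0))$, so comparison yields the identity $D^2u(x_0)\,\beta=\rho\,\nu$. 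Pairing this with $\beta$ gives $\langle D^2u(x_0)\beta,\beta\rangle=\rho\,\langle\nu,\beta\rangle$; the left-hand side is $\ge0$ since $D^2u(x_0)\ge0$, and $\rho>0$, because $\rho=0$ would force $D^2u(x_0)\beta=0$, impossible for a uniformly convex $u$ as $|\beta|=|Dh(Du(x_0))|=1$. Dividing by $\rho$ gives $\langle\beta,\nu\rangle=h_{p_k}(Du)\nu_k\ge0$, which is the assertion. (Equivalently, inverting $D^2u(x_0)$ turns the identity into $\beta=\rho\,(D^2u(x_0))^{-1}\nu$, whence $\langle\beta,\nu\rangle=\rho\,\langle(D^2u(x_0))^{-1}\nu,\nu\rangle\ge0$ by positivity of $(D^2u(x_0))^{-1}$.)

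I do not expect a genuine analytic obstacle: once the identity $D^2u(x_0)\beta=\rho\nu$ is in hand the conclusion is a one-line computation. The step needing a little care is the first one — that $w$ is nonnegative near $\partial\Omega$, equivalently that $Du$ maps $\Omega$ into $\tilde\Omega$ — which is precisely the diffeomorphism constraint of the problem and is what pins down the sign $\rho\ge0$. If one wanted the statement under the bare hypothesis $D^2u\ge0$ (without strict convexity), one would instead carry out the final step on the range of $D^2u(x_0)$, to which $\nu$ belongs as soon as $\rho>0$.
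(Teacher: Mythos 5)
The paper does not give a proof of this lemma --- it simply cites Urbas --- so there is nothing to compare verbatim; but your argument via the auxiliary function $w(x)=h(Du(x))$, which vanishes on $\partial\Omega$ and is positive inside, is exactly the natural (and I believe the intended) route: tangential derivatives of $w$ vanish, the inward normal derivative is nonnegative, giving $Dw(x_0)=D^2u(x_0)\beta=\rho\nu$ with $\rho\ge0$, and then pairing with $\beta$ and using $D^2u\ge0$ gives the sign. The reasoning is correct as long as $\rho>0$, which you correctly observe is guaranteed when $u$ is uniformly convex (the only situation the paper actually uses the lemma in), and in fact for any $D^2u\ge0$ one can conclude once $\rho>0$ simply from $0\le\langle D^2u(x_0)\beta,\beta\rangle=\rho\langle\beta,\nu\rangle$ without inverting $D^2u$.

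The one genuine loose end is the degenerate case $\rho=0$ under the bare hypothesis $D^2u\ge0$: then $D^2u(x_0)\beta=0$, the relation $\langle D^2u(x_0)\beta,\beta\rangle=\rho\langle\beta,\nu\rangle$ becomes $0=0$, and your closing remark about ``the range of $D^2u(x_0)$, to which $\nu$ belongs as soon as $\rho>0$'' does not touch this case, since its whole difficulty is $\rho=0$. Your proof therefore establishes the lemma in the strictly convex setting (sufficient for the paper) but leaves a gap at $\rho=0$ if one wants the statement literally with $D^2u\ge0$. You also implicitly supply the hypothesis --- correctly, since it is how the lemma is used --- that $Du$ maps $\Omega$ into $\tilde\Omega$ and $\partial\Omega$ into $\partial\tilde\Omega$; it would be worth making that explicit, as the lemma as printed omits it.
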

Now, we can present
\begin{lemma}\label{l2.3}
Let $F$ satisfy the structure conditions (\ref{e1.2.0})-(\ref{e1.4}) and  $f\in {\mathscr{A}}_\delta$. If $u$ is a uniformly smooth convex solution of (\ref{e1.8}) and $|D f|$ is sufficiently small, then the strict obliqueness estimate
\begin{equation}\label{e2.11}
\langle\beta, \nu\rangle\geq \frac{1}{C_{1}}>0
\end{equation}
holds on $\partial \Omega$ for some universal constant $C_{1}$, which depends only on $F$, $\Theta_0$, $\Omega,$ $\tilde{\Omega}$ and $\delta$.
\end{lemma}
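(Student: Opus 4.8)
The plan is to follow the now-standard strategy for strict obliqueness in optimal-transport-type second boundary value problems, going back to Urbas and used by Brendle--Warren and the second author in the $\kappa=0$ case, but keeping careful track of the dependence on $|Df|$. The key is to exploit the symmetry between $u$ and its Legendre transform $\tilde u$: by Remark \ref{r2.2}, $\tilde u$ solves the dual problem (\ref{e2.7}) with the roles of $\Omega$ and $\tilde\Omega$ interchanged, and the obliqueness quantity is essentially self-dual. Let $\tilde h$ be a defining function of $\Omega$ (so $\Omega=\{\tilde h>0\}$, $|D\tilde h|=1$ on $\partial\Omega$, and $\tilde h$ is uniformly concave with constant $\tilde\theta$). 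Since $Du$ maps $\partial\Omega$ to $\partial\tilde\Omega$ and $D\tilde u$ maps $\partial\tilde\Omega$ to $\partial\Omega$, the outward normal $Dh(Du)=\beta$ to $\partial\tilde\Omega$ at $Du(x)$ and the inward normal $\nu$ to $\partial\Omega$ at $x$ are related, after the change of variables, to $D\tilde h$ and $\beta$. Concretely one introduces
$$\langle\beta,\nu\rangle \quad\text{and its dual}\quad \langle \tilde\beta,\tilde\nu\rangle, \qquad \tilde\beta^i := \tilde h_{x_i}(D\tilde u),$$
and the first step is the algebraic identity $\langle\beta,\nu\rangle\,\langle\tilde\beta,\tilde\nu\rangle$ is controlled by the product of $|Dh(Du)|$, $|D\tilde h(D\tilde u)|$ with the eigenvalue bounds from Lemma \ref{l2.1}; since $|Dh|=|D\tilde h|=1$ on the boundaries, this shows $\langle\beta,\nu\rangle$ and $\langle\tilde\beta,\tilde\nu\rangle$ are comparable, so it suffices to bound their product below, equivalently to bound $\langle\beta,\nu\rangle$ below by a quantity that does not degenerate.

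Next I would set up the auxiliary function. Let $x_0\in\partial\Omega$ be a point where $\langle\beta,\nu\rangle$ attains its minimum over $\partial\Omega$; call this minimum value $\rho$. Consider on $\bar\Omega$ the function
$$w(x) := -h(Du(x)) - K\,\tilde h(x) \quad\text{or a variant}\quad v(x):= \langle\beta(x),\,D\tilde h(x)\rangle^{-1}\text{-type barrier},$$
following Urbas: one builds a function that vanishes to the right order on $\partial\Omega$, is a supersolution of the linearized operator $F^{ij}D_{ij}$ inside $\Omega$ (using $F^{ij}>0$, i.e. (\ref{e1.2}), and the bounds (\ref{e1.3})--(\ref{e1.4}) on $\sum F^{ij}$ and $\sum F^{ij}\lambda_i^2$), and whose normal derivative at $x_0$ encodes $\rho$. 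Differentiating the boundary relation $h(Du)=0$ tangentially twice, and the interior equation $F[D^2u]=f+c$ once, produces the terms $D_k f = f_k$; this is exactly where the smallness of $|Df|$ enters — the "bad" terms in the differential inequality for $w$ are of size $O(|Df|)$ times the good coercive terms, so for $|Df|$ small they are absorbed, and the maximum principle plus the Hopf lemma force $\rho\ge 1/C_1$ with $C_1$ depending only on the concavity constants $\theta,\tilde\theta$ of $\partial\tilde\Omega,\partial\Omega$, the ellipticity bounds $\Lambda_1,\Lambda_2$, $\Theta_0$, and $\delta$.

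The main obstacle I expect is the bookkeeping in the second-order tangential differentiation of $h(Du)=0$: differentiating twice along $\partial\Omega$ brings in $u_{ijk}$ contracted against $h_{p_ip_j}$ and against the second fundamental form of $\partial\Omega$, and one must re-express the third-derivative terms using the differentiated interior equation $F^{ij}u_{ijk}=f_k$ — note this is precisely equation (\ref{equ2.10.4}) from the proof of Proposition \ref{prop2.10.1}, so the mean-curvature interpretation is not incidental here. Getting the signs right so that the uniform concavity of $h$ (the $-\theta|\xi|^2$ bound) and of $\tilde h$ produces a strictly favorable sign, while showing every remaining term is either coercive or $O(|Df|)$-small, is the delicate computation; Lemma \ref{l1.3} guarantees $\langle\beta,\nu\rangle\ge0$ to begin with, so the real content is upgrading $\ge 0$ to $\ge 1/C_1$. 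Once the differential inequality $F^{ij}D_{ij}w \le C(1+|Df|\,\mathcal{T})$ with $\mathcal{T}=\sum F^{ij}$ and the boundary behavior of $w$ are in hand, the conclusion follows from the Hopf lemma applied to $w$ at $x_0$, exactly as in \cite{JU}, \cite{SM}.
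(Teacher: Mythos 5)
Your overall strategy is the right one (the Urbas barrier method exploiting the Legendre duality, with the smallness of $|Df|$ entering to make the dual operator $\tilde L = \tilde F^{ij}\partial_{ij} + f_{p_i}\partial_i$ satisfy the required differential inequalities), but the proposal skips the actual structure of the argument, and in a way that matters. The barrier argument does \emph{not} directly give $\langle\beta,\nu\rangle\geq 1/C_1$; it only gives the normal-derivative estimate at the minimum point. Concretely, the paper sets $v=\langle\beta,\nu\rangle + h(Du)$, picks $x_0\in\partial\Omega$ minimizing $\langle\beta,\nu\rangle$ with $\nu(x_0)=e_n$, and uses the barrier $\Phi=v - v(x_0) + C_0\tilde h + A|x-x_0|^2$ together with the maximum principle to get only $v_n(x_0)>-C$. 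One then combines this with the vanishing tangential derivatives $v_r(x_0)=0$ ($r<n$), the concavity of the extended $\nu$ (i.e.\ $(\nu_{kl})\leq -\frac{1}{C}\operatorname{diag}(1,\dots,1,0)$), and Urbas's nonnegativity lemma, and arrives at a dichotomy: either $h_{p_n}(Du(x_0))$ is bounded below directly, or $h_{p_k}h_{p_l}u_{kl}$ is bounded below. Only the second branch needs the dual estimate, which bounds $\tilde h_{p_k}\tilde h_{p_l}\tilde u_{kl} = u^{ij}\nu_i\nu_j$ from below, and the conclusion is then drawn from the exact algebraic identity $\langle\beta,\nu\rangle^2 = (h_{p_k}h_{p_l}u_{kl})(u^{ij}\nu_i\nu_j)$, not from ``comparability'' of the two obliqueness quantities (they are in fact equal, $\langle\beta,\nu\rangle=\langle\tilde\beta,\tilde\nu\rangle$). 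Without this dichotomy and without this identity, ``maximum principle plus Hopf gives $\rho\geq 1/C_1$'' is a non sequitur.

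Two smaller points. The auxiliary function you propose, $w=-h(Du)-K\tilde h$, is the barrier used in the boundary $C^2$ estimate (Lemma~\ref{lem3.2} for $u_{\beta\beta}$), not the one for obliqueness; and the ``main obstacle'' you flag — differentiating $h(Du)=0$ tangentially twice and reintroducing $u_{ijk}$ via $F^{ij}u_{ijk}=f_k$ — also belongs to the tangential second-derivative estimate (Lemma~\ref{lem3.3}), not to Lemma~\ref{l2.3}. For the obliqueness estimate the delicate step is the computation of $Lv$, which produces terms like $h_{p_kp_lp_m}\nu_kF^{ij}u_{li}u_{mj}$ and $h_{p_kp_l}F^{ij}u_{lj}u_{ki}$ that are handled using the bounds (\ref{e1.3})--(\ref{e1.4}) on $\sum F^{ii}$ and $\sum F^{ii}\lambda_i^2$; the $|Df|$-smallness is used only on the $\tilde u$-side, to absorb the drift term $f_{p_i}\partial_i$ when proving $\tilde L\tilde\Phi\leq 0$.
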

\begin{proof}
Define
$$v=\langle \beta,\nu\rangle+h(Du).$$
Let $x_0\in \partial \Omega$ such that
$$\langle \beta,\nu\rangle(x_0)=h_{p_k}(Du(x_0))\nu_k(x_0)=\min_{\partial\Omega}\langle \beta,\nu\rangle.$$
By rotation, we may assume that $\nu(x_0)=(0,\cdots,0,1)=:e_n$.
Using the above assumptions and the boundary condition, we obtain
$$v(x_0)=\min_{\partial\Omega} v=h_{p_{n}}(Du(x_{0})).$$
 By the convexity of $\Omega$ and its smoothness, we extend $\nu$ smoothly to a tubular neighborhood of $\partial\Omega$ such that in the matrix sense
\begin{equation}\label{eqq2.3}
  \left(\nu_{kl}\right):=\left(D_k\nu_l\right)\leq -\frac{1}{C}\operatorname{diag} (1,\cdots, 1,0),
\end{equation}
where $C$ is a positive constant. By Lemma \ref{l1.3}, we see that $h_{p_{n}}(Du(x_{0}))\geq0$.

At $x_0$ we have
\begin{equation}\label{eqq2.4}
 0=v_r=h_{p_np_k}u_{kr}+h_{p_k}\nu_{kr}+h_{p_k}u_{kr}, \quad 1\leq r\leq n-1.
\end{equation}
We assume that the following key estimate
\begin{equation}\label{eqq2.5}
 v_n(x_0)>-C
\end{equation}
holds which will be proved later, where $C$ is a constant depending only on $\Omega$, $h$, $\tilde{h}$ and $\delta$.

It's not hard to check that (\ref{eqq2.5}) can be rewritten as
\begin{equation}\label{eqq2.6}
 h_{p_np_k}u_{kn}+h_{p_k}\nu_{kn}+h_{p_k}u_{kn}>-C.
\end{equation}
Multiplying (\ref{eqq2.6}) with $h_{p_n}$ and (\ref{eqq2.4}) with $h_{p_r}$ respectively, and summing up together, we obtain
\begin{equation}\label{eqq2.7}
 h_{p_k}h_{p_l}u_{kl}\geq -Ch_{p_n}- h_{p_k}h_{p_l}\nu_{kl}- h_{p_k}h_{p_np_l}u_{kl}.
\end{equation}
Using (\ref{eqq2.3}) and
$$ 1\leq r\leq n-1,\quad h_{p_k}u_{kr}=\frac{\partial h(Du)}{\partial x_r}=0,\quad h_{p_k}u_{kn}=\frac{\partial h(Du)}{\partial x_n}\geq 0,\quad -h_{p_np_n}\geq 0,$$
we have
$$h_{p_k}h_{p_l}u_{kl}\geq-Ch_{p_n}+\frac{1}{C}|Dh|^2-\frac{1}{C}h^2_{p_n}=-Ch_{p_n}+\frac{1}{C}-\frac{1}{C}h^2_{p_n}.$$
For the last term of the above inequality, we distinguish two cases at $x_0$.

Case (i).  If
$$-Ch_{p_n}+\frac{1}{C}-\frac{1}{C}h^2_{p_n}\leq \frac{1}{2C},$$
then
$$h_{p_k}(Du)\nu_{k}=h_{p_n }\geq \sqrt{\frac{1}{2}+\frac{C^4}{4}}-\frac{C^2}{2}.$$
It shows that there is a uniform positive lower bound for the quantity $\min_{\partial\Omega}\langle \beta,\nu\rangle$.

Case (ii). If
$$-Ch_{p_n}+\frac{1}{C}-\frac{1}{C}h^2_{p_n}> \frac{1}{2C},$$
then we obtain a positive lower bound of $h_{p_k}h_{p_l}u_{kl}$.

Let $\tilde{u}$ be the Legendre transformation of $u$, then $\tilde{u}$ satisfies
\begin{equation}\label{eqq2.9}
\left\{ \begin{aligned}\tilde{F}\left[D^2\tilde{u}\right] &=-f(D\tilde{u})-c, \ \ && \tilde x\in\tilde{\Omega},\\
\tilde{h}(D\tilde{u})&=0, && \tilde x\in\partial\tilde{\Omega},
\end{aligned} \right.
\end{equation}
where $\tilde{h}$ is the defining function of $\Omega$. That is,
$$\Omega=\{\tilde{p}\in\mathbb{R}^{n} : \tilde{h}(\tilde{p})>0\},\ \ \ |D\tilde{h}|_{\partial\Omega}=1, \ \ \ D^2\tilde{h}\leq -\tilde{\theta}I,$$
where $\tilde{\theta}$ is some positive constant. The unit inward normal vector of $\partial\Omega$ can be expressed by $\nu=D\tilde{h}$. For the same reason,
$\tilde{\nu}=Dh$, where $\tilde{\nu}=(\tilde{\nu}_{1}, \tilde{\nu}_{2},\cdots,\tilde{\nu}_{n})$ is the unit inward normal vector of $\partial\tilde{\Omega}$.

Let $\tilde{\beta}=(\tilde{\beta}^{1}, \cdots, \tilde{\beta}^{n})$ with $\tilde{\beta}^{k}:=\tilde{h}_{p_{k}}(D\tilde{u})$. We also define
$$\tilde{v}=\langle\tilde{\beta}, \tilde{\nu}\rangle+\tilde{h}(D\tilde{u}),$$
in which
$$\langle\tilde{\beta}, \tilde{\nu}\rangle=\langle\beta, \nu\rangle.$$

Denote $\tilde{x}_{0}=Du(x_{0})$. Then $\tilde{v}(\tilde{x}_{0})=v(x_{0})=\min_{\partial\tilde{\Omega}} \tilde{v}$. Using the same methods, under the assumption of
\begin{equation}\label{eqq2.9.1.1}
\tilde{v}_{n}(\tilde{x}_{0})\geq -C,
\end{equation}
we obtain the positive lower bounds of $\tilde{h}_{p_{k}}\tilde{h}_{p_{l}}\tilde{u}_{kl}$, or
$$h_{p_{k}}(Du)\nu_{k}=\tilde{h}_{p_{k}}(D\tilde{u})\tilde{\nu}_{k}=\tilde{h}_{p_{n}}\geq\sqrt{\frac{1}{2}+\frac{C^4}{4}}-\frac{C^2}{2}.$$
We notice that
$$\tilde{h}_{p_{k}}\tilde{h}_{p_{l}}\tilde{u}_{kl}=\nu_{i}\nu_{j}u^{ij}.$$
Then by the positive lower bounds of $h_{p_{k}}h_{p_{l}}u_{kl}$ and $\tilde{h}_{p_{k}}\tilde{h}_{p_{l}}\tilde{u}_{kl}$, the lemma follows from
\begin{equation}\label{e2.11.1}
\langle \beta,\nu\rangle=\sqrt{h_{p_k}h_{p_l}u_{kl}u^{ij}\nu_i\nu_j},
\end{equation}
which is proved in \cite{JU}.

It remains to prove the key estimate (\ref{eqq2.5}) and (\ref{eqq2.9.1.1}). We prove (\ref{eqq2.5}) first. By $D^2\tilde{h}\leq -\tilde{\theta}I$ and (\ref{e1.3}) we have
\begin{equation}\label{eqq2.10}
L\tilde{h}\leq -\tilde{\theta}\sum^{n}_{i=1} F^{ii},
\end{equation}
where $L:=F^{ij}\partial_{ij}$. On the other hand,
\begin{equation}\label{eqq2.11}
 \begin{aligned}
Lv=&h_{p_kp_lp_m}\nu_kF^{ij}u_{li}u_{mj}+2h_{p_kp_l}F^{ij}\nu_{kj}u_{li}\\
&+h_{p_kp_l}F^{ij}u_{lj}u_{ki}+h_{p_kp_l}\nu_kLu_l+h_{p_k}L\nu_k+h_{p_k}Lu_k.
 \end{aligned}
\end{equation}
At first we estimate the first term on the right hand side of (\ref{eqq2.11}).  By the diagonal basis and (\ref{e1.4}), we have
$$|h_{p_kp_lp_m}\nu_kF^{ij}u_{li}u_{mj}|\leq C\sum_{i=1}^{n}\frac{\partial F}{\partial \lambda_i}\lambda_i^2\leq C, $$
where $C$ is a constant depending only on $h$, $\Omega$, $\Lambda_1$, $\Lambda_2$ and $\delta$. Similarly, we also get
$$ |h_{p_kp_l}F^{ij}u_{lj}u_{ki}|\leq C\sum_{i=1}^{n}\frac{\partial F}{\partial \lambda_i}\lambda_i^2\leq C.$$
For the second term, by Cauchy inequality, we obtain
\begin{equation*}
\begin{aligned}
|2h_{p_{k}p_{l}}F^{ij}\nu_{kj}u_{li}|&\leq C\sum^{n}_{i=1}\frac{\partial F}{\partial \lambda_{i}}\lambda_{i}
=C\sum^{n}_{i=1}\sqrt{\frac{\partial F}{\partial \lambda_{i}}}\sqrt{\frac{\partial F}{\partial \lambda_{i}}}\lambda_{i}\\
&\leq C\left(\sum^{n}_{i=1}\frac{\partial F}{\partial \lambda_{i}}\right)\left(\sum^{n}_{i=1}\frac{\partial F}{\partial \lambda_{i}}\lambda^{2}_{i}\right)\\
&\leq C.
\end{aligned}
\end{equation*}
By (\ref{e1.8}) we have $Lu_l=f_{l}$. Then we get
$$|h_{p_kp_l}\nu_kLu_l|\leq C,\quad |h_{p_k}Lu_k|\leq C.$$
It follows from (\ref{e1.3}) that
$$|h_{p_{k}}L\nu_k|\leq C\sum^{n}_{i=1} F^{ii}.$$
Inserting these into (\ref{eqq2.11}) and using (\ref{e1.3}), it is immediate to check that there exists a positive constant $C$ depending only on $h$, $\Omega$, $\Lambda_1$, $\Lambda_2$, $\Theta_0$ and $\delta$, such that
\begin{equation}\label{eqq2.12}
Lv\leq C\sum^{n}_{i=1} F^{ii}.
\end{equation}

Denote a neighborhood of $x_0$ in $\Omega$ by
$$\Omega_{\rho}:=\Omega\cap B_{\rho}(x_0),$$
where $\rho$ is a positive constant such that $\nu$ is well defined in $\Omega_{\rho}$.
To obtain the desired results, it suffices to consider the function

$$\Phi(x):=v(x)-v(x_0)+C_0\tilde{h}(x)+A|x-x_0|^2,$$
where $C_0$ and $A$ are positive constants to be determined. On $\partial\Omega$, it is clear that $\Phi\geq 0$. Since $v$ is bounded, we can choose $A$ large enough such that on $\Omega\cap \partial B_{\rho}(x_0)$
$$\Phi(x)=v(x)-v(x_0)+C_0\tilde{h}(x)+A\rho^2>0.$$
 It follows from (\ref{eqq2.10}) that
$$L(C_0\tilde{h}(x)+A|x-x_0|^2)\leq  (-C_0\tilde{\theta}+2A)\sum_{i=1}^n F^{ii}.$$
Then using (\ref{eqq2.12}) and choosing $C_0\gg A$  we have
$$L\Phi(x)\leq 0.$$
Therefore,
\begin{equation}\label{eqq2.13}
\left\{ \begin{aligned}
   L\Phi&\leq 0,\ \  &&x\in\Omega_{\rho},\\
   \Phi&\geq 0,\ \  &&x\in\partial\Omega_{\rho}.
                          \end{aligned} \right.
\end{equation}
We apply the maximum principle to get
$$\Phi|_{\Omega_{\rho}}\geq \min_{\partial\Omega_{\rho}}\Phi\geq 0.$$
Combining it with $\Phi(x_0)=0$, we obtain $\partial_n\Phi(x_0)\geq 0$, which gives the desired estimate (\ref{eqq2.5}).

Finally, we prove (\ref{eqq2.9.1.1}). The proof of (\ref{eqq2.9.1.1}) is similar to the one of (\ref{eqq2.5}).  Define
$$\tilde L:= \tilde F^{ij}\partial_{ij}+f_{p_i}\partial_i.$$
By (\ref{eqq2.9}), we see that $\tilde L\tilde u_l=0$, and thus
$$\tilde L\tilde v=\tilde F^{ij}\tilde u_{mj}\tilde u_{li}\tilde h_{p_kp_lp_m}\tilde \nu_k+2\tilde h_{p_kp_l}\tilde F^{ij}\tilde u_{li}\tilde \nu_{kj}+\tilde F^{ij}\tilde h_{p_k}\tilde \nu_{kij}+\tilde h_{p_kp_l}\tilde F^{ij}\tilde u_{lj}\tilde u_{ki}+\tilde h_{p_k}f_{p_i}\tilde \nu_{ki}.$$
By making use of  the following identities
\begin{equation*}
\frac{\partial \tilde F}{\partial \mu_{i}}=\lambda^{2}_{i}\frac{\partial F}{\partial \lambda_{i}},\quad \mu^{2}_{i}\frac{\partial \tilde F}{\partial \mu_{i}}=\frac{\partial F}{\partial \lambda_{i}}.
\end{equation*}
we deduce that  $\tilde{F}$ satisfies the structure conditions (\ref{e1.2.0})-(\ref{e1.4}). Repeating the proof of (\ref{eqq2.12}), we have
\begin{equation}\label{eqq2.13aa}
\tilde L\tilde v\leq C\sum^{n}_{i=1} \tilde F^{ii},
\end{equation}
where $C$ depends only on $\Omega$, $\tilde \Omega$, $\Theta_0$ and $\delta$.

Denote a neighborhood of $\tilde{x}_0$ in $\tilde\Omega$ by
$$\tilde\Omega_{r}:=\tilde\Omega\cap B_{r}(\tilde{x}_0),$$
where $r$ is a positive constant such that $\tilde\nu$ is well defined in $\tilde\Omega_{r}$. Consider
$$\tilde\Phi(y):=\tilde v(y)-\tilde v(\tilde{x}_0)+\tilde C_0 h(y)+\tilde A|y-\tilde{x}_0|^2,$$
where $\tilde C_0$ and $\tilde A$ are positive constants to be determined. It is clear that $\tilde\Phi\geq 0$ on $\partial\tilde \Omega$. Since $\tilde v$ is bounded, we can choose $\tilde A$ large enough such that on $\tilde\Omega\cap \partial B_{r}(\tilde{x}_0)$
$$\tilde\Phi(y)=\tilde v(y)-\tilde v(\tilde{x}_0)+\tilde C_0h(y)+\tilde A r^2>0.$$
By (\ref{e1.4}) and (\ref{eqq2.13aa}), it is not difficult to show that
$$\tilde L\tilde\Phi(y)\leq \left(C-\frac{\tilde C_0\theta}{2}+2\tilde A\right)\sum_{i=1}^n \tilde F^{ii}+2 \tilde A f_{p_i}(y_i-\tilde{x}_{0i})-\tilde C_0\left(\frac{\theta}{2}\sum_{i=1}^n \tilde F^{ii}-f_{p_i}\partial_i h\right). $$
In order to make
$$\tilde L\tilde\Phi(y)\leq 0,$$
we only need to choose $\tilde C_0\gg \tilde A$ and
$$|D f|\leq \frac{\theta\Lambda_1}{2}\cdot\frac{1}{\max_{\bar{\tilde \Omega}}|D h|}.$$
Consequently,
\begin{equation}\label{eqq2.13a}
\left\{ \begin{aligned}
   \tilde L\tilde \Phi&\leq 0,\ \  &&y\in\tilde \Omega_{r},\\
   \tilde\Phi&\geq 0,\ \  &&y\in\partial\tilde\Omega_{r}.
                          \end{aligned} \right.
\end{equation}
Therefore, we get (\ref{eqq2.9.1.1}) as same as  the argument in (\ref{eqq2.5}). Thus the proof of (\ref{e2.11}) is completed.
\end{proof}

\section{The $C^2$ estimate}
The following definition provides a basic connection between (\ref{eqq2.9}) and (\ref{e1.8}) and will be used frequently in the sequel.
\begin{deff}\label{d1.10}
We say that  $\tilde{u}$ in (\ref{eqq2.9})  is a dual solution to (\ref{e1.8}).
\end{deff}
We now proceed to carry out the $C^2$ estimate. The strategy is to  reduce the $C^2$ global estimate
of $u$ and $\tilde{u}$ to the boundary.

\begin{lemma}\label{lem3.1}
If $u$ is a smooth uniformly convex solution of (\ref{e1.8}) and there hold (\ref{e1.2}), (\ref{e1.2.1}) and (\ref{e1.3}), then there exists a positive constant $C$ depending only on $n$, $\Omega$, $\tilde{\Omega}$, $\Lambda_1$ and $\operatorname{diam}(\Omega)$, such that
\begin{equation}\label{eq3.1}
   \sup_{\Omega}|D^2u|\leq \max_{\partial\Omega} |D^2u|+C\sup_\Omega|D^2f|.
\end{equation}
\end{lemma}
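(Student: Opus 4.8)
The plan is to reduce the estimate of $\sup_\Omega |D^2 u|$ to $\max_{\partial\Omega}|D^2 u|$ by a standard maximum-principle argument applied to the largest eigenvalue (equivalently, to the second directional derivative of $u$) of $D^2u$. First I would fix a unit vector $\xi \in \mathbb{R}^n$ and consider the scalar function $w = u_{\xi\xi} = D_\xi D_\xi u$. Differentiating the equation $F[D^2u] = f(x)+c$ twice in the direction $\xi$ gives, at any interior point,
\begin{equation*}
F^{ij} D_{ij} w + F^{ij,rs} (D_\xi u_{ij})(D_\xi u_{rs}) = D_{\xi\xi} f,
\end{equation*}
where $F^{ij} = \partial F/\partial a_{ij}$ and $F^{ij,rs} = \partial^2 F/\partial a_{ij}\partial a_{rs}$. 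The crucial sign comes from the concavity hypotheses: conditions (\ref{e1.2}) and (\ref{e1.2.1}) (positivity of $\partial F/\partial\lambda_i$ together with $(\partial^2 F/\partial\lambda_i\partial\lambda_j)\le 0$) imply, via the usual formula expressing $F^{ij,rs}$ on the space of symmetric matrices in terms of $\partial^2 F/\partial\lambda_i\partial\lambda_j$ and the divided differences $(\partial F/\partial\lambda_i - \partial F/\partial\lambda_j)/(\lambda_i-\lambda_j)$, that $F$ is concave as a function of the matrix entries on $\mathscr{P}_n$. Hence the second-order term $F^{ij,rs}(D_\xi u_{ij})(D_\xi u_{rs}) \le 0$, and we obtain the differential inequality
\begin{equation*}
L w := F^{ij} D_{ij} w \ge D_{\xi\xi} f \ge -\sup_\Omega |D^2 f|.
\end{equation*}

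Next I would kill the right-hand side by adding a quadratic barrier. Since $L$ is a linear elliptic operator with $L(|x|^2) = 2\sum_i F^{ii} = 2\sum_i \partial F/\partial\lambda_i \ge 2\Lambda_1 > 0$ by the structure condition (\ref{e1.3}), the function
\begin{equation*}
W(x) := u_{\xi\xi}(x) + \frac{\sup_\Omega |D^2 f|}{\Lambda_1}\, \frac{|x|^2}{2}
\end{equation*}
(or a harmless translate of $|x|^2$ to a fixed reference point, bounded by $\operatorname{diam}(\Omega)$) satisfies $L W \ge 0$ on $\Omega$. By the weak maximum principle, $W$ attains its maximum on $\partial\Omega$, so
\begin{equation*}
u_{\xi\xi}(x) \le W(x) \le \max_{\partial\Omega} W \le \max_{\partial\Omega}|D^2 u| + C(\operatorname{diam}\Omega)\,\sup_\Omega |D^2 f|
\end{equation*}
for every $x\in\Omega$. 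Taking the supremum over unit vectors $\xi$ and using that $D^2 u \ge 0$ (so that $|D^2 u| = \max_{|\xi|=1} u_{\xi\xi}$ up to a dimensional constant), this yields (\ref{eq3.1}) with $C$ depending only on $n$, $\operatorname{diam}(\Omega)$ and $\Lambda_1$; the domains $\Omega$, $\tilde{\Omega}$ enter only through the region $\Gamma^+_{]\mu,\omega[}$ in which $\lambda(D^2u)$ lies (Lemma \ref{l2.1} and Remark \ref{r2.2}), which is what makes $\Lambda_1$ a genuine universal constant here.

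The main obstacle is really the concavity step: one must verify carefully that $A \mapsto F[A]$ is concave on $\mathscr{P}_n$, i.e. that $F^{ij,rs}\eta_{ij}\eta_{rs}\le 0$ for all symmetric $\eta$, from the two eigenvalue conditions (\ref{e1.2})–(\ref{e1.2.1}). This is the classical fact that a symmetric function of the eigenvalues is concave on the symmetric matrices iff it is concave and monotone as a function of the eigenvalues; the off-diagonal contribution $\sum_{i\ne j}\frac{(\partial F/\partial\lambda_i)-(\partial F/\partial\lambda_j)}{\lambda_i-\lambda_j}\,\eta_{ij}^2$ has the right sign precisely because $\partial F/\partial\lambda_i$ is positive and the $\lambda_i$ are ordered, while the diagonal contribution is $\le 0$ by (\ref{e1.2.1}). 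Everything after that — differentiating the equation twice, choosing the barrier, invoking the maximum principle — is routine, with the only bookkeeping being to track that all constants depend solely on the advertised data.
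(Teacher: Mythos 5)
Your proof is essentially the paper's: differentiate the equation twice in direction $\xi$, drop the term $F^{ij,rs}u_{ij\xi}u_{rs\xi}\le 0$ by concavity of $F$ on matrices, kill the source term $f_{\xi\xi}$ with a barrier whose image under $L$ is controlled below by $\sum_i F^{ii}\ge\Lambda_1$, and apply the maximum principle; the paper's barrier is $\frac{1}{\Lambda_1}\bigl(ne^{d}-\sum_i e^{x_i}\bigr)\sup_\Omega|f_{\xi\xi}|$ rather than your quadratic $\frac{\sup_\Omega|D^2f|}{\Lambda_1}\frac{|x|^2}{2}$, but both do the same job. One small correction to your aside: the off-diagonal second-derivative entries $\frac{F_i-F_j}{\lambda_i-\lambda_j}$ of the matrix function are $\le 0$ because $F$ is \emph{symmetric and concave} in $\lambda$ (symmetry plus concavity force $F_i\le F_j$ whenever $\lambda_i\ge\lambda_j$), not because each $\partial F/\partial\lambda_i>0$ and the eigenvalues are ordered; positivity of the gradient alone would not give that sign.
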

\begin{proof}
Without loss of generality, we may assume that $\Omega$ lies in cube $[0,d]^{n}$. Let
$$L:=F^{ij}\partial_{ij}.$$
For any unit vector $\xi$, differentiating the equation in (\ref{e1.8}) twice in direction $\xi$ gives
\begin{equation*}
Lu_{\xi\xi}+F^{ij,rs}u_{ij\xi}u_{rs\xi}=f_{\xi\xi}.
\end{equation*}
Then by the concavity of $F$ on $\Gamma^+_n$, we have
\begin{equation}\label{e3.0}
Lu_{\xi\xi}=-F^{ij,rs}u_{ij\xi}u_{rs\xi}+f_{\xi\xi}\geq f_{\xi\xi}.
\end{equation}

Let $$v=\sup_{\partial\Omega}u_{\xi\xi}+\frac{1}{\Lambda_1}\left(ne^{ d}-\sum^{n}_{i=1}e^{ x_{i}}\right)\sup_{\Omega}|f_{\xi\xi}|.$$
By direct calculation and (\ref{e1.3}), we obtain
\begin{equation}\label{e3.1}
\begin{aligned}
Lv=&-\frac{1}{\Lambda_1}\sup_{\Omega}|f_{\xi\xi}|\left(\sum^{n}_{i=1}e^{ x_{i}}F^{ii}\right)\\
\leq &-\frac{1}{\Lambda_1}\sup_{\Omega}|f_{\xi\xi}|\left(\sum^{n}_{i=1}F^{ii}\right)\\
\leq &-\sup_{\Omega}|f_{\xi\xi}|.
\end{aligned}
\end{equation}
Combining (\ref{e3.0}) with (\ref{e3.1}), we have
$$L(v-u_{\xi\xi})\leq -\left(\sup_{\Omega}|f_{\xi\xi}|+f_{\xi\xi}\right)\leq 0.$$
It is obvious that $v-u_{\xi\xi}\geq 0$ on $\partial\Omega$. Then by the maximum principle we obtain
$$\sup_{\Omega}u_{\xi\xi}\leq \sup_{\Omega}v\leq \sup_{\partial\Omega}u_{\xi\xi}+\frac{ne^{d}}{\Lambda_1}\sup_{\Omega}|f_{\xi\xi}|.$$
This completes the proof of (\ref{eq3.1}).
\end{proof}

Next, we estimate the second order derivative on the boundary. By differentiating the boundary condition $h(Du)=0$ in any tangential direction $\varsigma$, we have
\begin{equation}\label{eq3.2}
   u_{\beta \varsigma}=h_{p_k}(Du)u_{k\varsigma}=0.
\end{equation}
The second order derivative of $u$ on the boundary is controlled by $u_{\beta \varsigma}$, $u_{\beta \beta}$ and $u_{\varsigma\varsigma}$. In the following we give the arguments as in \cite{JU}, one can see there for more details.

At $x\in \partial\Omega$, any unit vector $\xi$ can be written in terms of a tangential component $\varsigma(\xi)$ and a component in the direction $\beta$ by
$$\xi=\varsigma(\xi)+\frac{\langle \nu,\xi\rangle}{\langle\beta,\nu\rangle}\beta,$$
where
$$\varsigma(\xi):=\xi-\langle \nu,\xi\rangle \nu-\frac{\langle \nu,\xi\rangle}{\langle\beta,\nu\rangle}\beta^T,$$
and
$$\beta^T:=\beta-\langle \beta,\nu\rangle \nu.$$
By the strict obliqueness estimate (\ref{e2.11}), we have
\begin{equation}\label{eq3.3}
\begin{aligned}
|\varsigma(\xi)|^{2}&=1-\left(1-\frac{|\beta^{T}|^{2}}{\langle\beta,\nu\rangle^{2}}\right)\langle\nu,\xi\rangle^{2}
-2\langle\nu,\xi\rangle\frac{\langle\beta^{T},\xi\rangle}{\langle\beta,\nu\rangle}\\
&\leq 1+C\langle\nu,\xi\rangle^{2}-2\langle\nu,\xi\rangle\frac{\langle\beta^{T},\xi\rangle}{\langle\beta,\nu\rangle}\\
&\leq C.
\end{aligned}
\end{equation}
Denote $\varsigma:=\frac{\varsigma(\xi)}{|\varsigma(\xi)|}$, then by (\ref{eq3.3}) and (\ref{e2.11}) we obtain
\begin{equation}\label{eq3.4}
\begin{aligned}
u_{\xi\xi}&=|\varsigma(\xi)|^{2}u_{\varsigma\varsigma}+2|\varsigma(\xi)|\frac{\langle\nu,\xi\rangle}{\langle\beta,\nu\rangle}u_{\beta\varsigma}+
\frac{\langle\nu,\xi\rangle^{2}}{\langle\beta,\nu\rangle^{2}}
u_{\beta\beta}\\
&=|\varsigma(\xi)|^{2}u_{\varsigma\varsigma}+\frac{\langle\nu,\xi\rangle^{2}}{\langle\beta,\nu\rangle^{2}}
u_{\beta\beta}\\
&\leq C(u_{\varsigma\varsigma}+u_{\beta\beta}),
\end{aligned}
\end{equation}
where $C$ depends only on $\Omega$, $\tilde \Omega$, $\Lambda_1$, $\Lambda_2$, $\delta$ and the constant $C_1$ in (\ref{e2.11}). Therefore, we only need to estimate $u_{\beta\beta}$ and $u_{\varsigma\varsigma}$ respectively.

First we have
\begin{lemma}\label{lem3.2}
Let $F$ satisfy the structure conditions (\ref{e1.2.0})-(\ref{e1.4}) and $f\in {\mathscr{A}}_\delta$. If $u$ is a smooth uniformly convex solution of (\ref{e1.8}), then there exists a positive constant $C$ depending only on $\Omega$, $\tilde{\Omega}$, $\Lambda_1$, $\Lambda_2$ and $\delta$, such that
\begin{equation}\label{eq3.5}
   \max_{\partial\Omega}u_{\beta\beta} \leq C.
\end{equation}
\end{lemma}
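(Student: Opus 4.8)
The plan is to estimate $u_{\beta\beta}$ at an arbitrary boundary point $x_0\in\partial\Omega$ by constructing a suitable barrier and applying the maximum principle in a neighborhood $\Omega_\rho=\Omega\cap B_\rho(x_0)$. After a rotation we may take $\nu(x_0)=e_n$. The key is that $u_{\beta\beta}$ on $\partial\Omega$ can be controlled once we control the normal-normal second derivative, and the latter is tied — via the boundary condition $h(Du)=0$ — to the obliqueness $\langle\beta,\nu\rangle$ established in Lemma~\ref{l2.3}. Concretely, I would differentiate $h(Du)=0$ along $\partial\Omega$ to relate $u_{\beta\beta}$ to tangential data and to the geometry of $\partial\Omega$ and $\partial\tilde\Omega$, and then set up a barrier of the form
$$w(x):=\pm\bigl(u_\beta(x)-u_\beta(x_0)\bigr)+K\tilde h(x)+N|x-x_0|^2,$$
where $\tilde h$ is the defining function of $\Omega$ (so $\tilde h\geq 0$ in $\Omega$, $\tilde h=0$ on $\partial\Omega$, $D^2\tilde h\leq-\tilde\theta I$), and $K\gg N$ are large constants to be chosen.

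The main computation is to apply $L=F^{ij}\partial_{ij}$ to $u_\beta=h_{p_k}(Du)u_k$. Since $\beta^k=h_{p_k}(Du)$ depends on $x$ through $Du$, differentiating produces terms involving $u_{ijk}$ contracted against $F^{ij}$; using $Lu_k=f_k$ (from differentiating the equation in \eqref{e1.8}) together with the concavity \eqref{e1.2.1} and the structure bounds \eqref{e1.3}, \eqref{e1.4} exactly as in the proof of the obliqueness estimate, one obtains $|Lu_\beta|\leq C\sum_i F^{ii}$, where $C$ depends only on $h$, $\Omega$, $\Lambda_1$, $\Lambda_2$, $\Theta_0$ and $\delta$ (the smallness of $|Df|$ keeps the $f_k$ terms harmless). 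Combining with $L(K\tilde h+N|x-x_0|^2)\leq(-K\tilde\theta+2N)\sum_iF^{ii}$ and choosing $K\gg N$ gives $Lw\leq 0$ in $\Omega_\rho$. Choosing $N$ large makes $w\geq 0$ on $\Omega\cap\partial B_\rho(x_0)$, while on $\partial\Omega\cap B_\rho(x_0)$ one checks $w\geq 0$ using $\tilde h=0$ there and the fact that $u_\beta$ vanishes to the appropriate order in the tangential directions (cf.\ \eqref{eq3.2}); hence by the maximum principle $w\geq 0$ in $\Omega_\rho$, and since $w(x_0)=0$ we get $\partial_n w(x_0)\geq 0$, i.e.\ a one-sided bound on $\partial_n u_\beta(x_0)$.

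To convert this into the bound $u_{\beta\beta}(x_0)\leq C$, I would decompose $\partial_n u_\beta(x_0)$ at $x_0$ into $u_{\beta\beta}$ plus lower-order boundary terms: writing $e_n$ in the tangential/$\beta$ splitting from Section~4 and using $\langle\beta,\nu\rangle\geq 1/C_1$ from \eqref{e2.11}, one has $u_{\beta n}=\langle\beta,\nu\rangle^{-1}u_{\beta\beta}+(\text{tangential terms})$, where the tangential terms are controlled by $\|h\|_{C^2}$, $\|\partial\Omega\|_{C^2}$ and the already-established first-order bounds. The sign choice in the barrier must be arranged so that the maximum-principle inequality delivers the \emph{upper} bound on $u_{\beta\beta}$; the other sign would give a (here unneeded) lower bound, which in any case follows from $D^2u>0$. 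The hard part is bookkeeping the commutator terms when differentiating $\beta^k=h_{p_k}(Du)$ — in particular making sure every third-derivative term $u_{ijk}$ either pairs off against $F^{ij}$ to produce $\sum F^{ii}\lambda_i$ or $\sum F^{ii}\lambda_i^2$ (absorbed via \eqref{e1.3}–\eqref{e1.4} and Cauchy–Schwarz, precisely as in \eqref{eqq2.11}) or is killed by the concavity term from $Lu_k$ — so that no uncontrolled third derivatives survive. Once $Lu_\beta\leq C\sum_iF^{ii}$ is in hand, the barrier argument is essentially identical to the one already carried out for \eqref{eqq2.5}, so I would present that computation in detail and refer to Lemma~\ref{l2.3} and to \cite{JU} for the repeated maximum-principle mechanics.
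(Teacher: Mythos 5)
Your barrier is built around the wrong scalar quantity, and this creates two genuine gaps that your write-up glosses over. You propose to put $\pm\bigl(u_\beta(x)-u_\beta(x_0)\bigr)$ in the barrier, where $u_\beta = h_{p_k}(Du)\,u_k$. But $u_\beta$ is \emph{not} constant on $\partial\Omega$: on the boundary $u_\beta=\langle Dh(Du),Du\rangle$ with $Du\in\partial\tilde\Omega$, which varies unless $\tilde\Omega$ is a ball centered at the origin. Concretely, for tangential $\varsigma$ one has $\partial_\varsigma u_\beta = h_{p_kp_l}(Du)\,u_{l\varsigma}u_k + u_{\beta\varsigma}$, and while $u_{\beta\varsigma}=0$ by \eqref{eq3.2}, the first term is generically nonzero. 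So \eqref{eq3.2} controls the tangential derivatives of $h(Du)$, not of $u_\beta$, and your appeal to it at this point is incorrect. With either sign, $\pm(u_\beta - u_\beta(x_0))$ drops below zero \emph{linearly} along $\partial\Omega$ near $x_0$, which the quadratic term $N|x-x_0|^2$ cannot repair for small $|x-x_0|$; hence $w\geq0$ on $\partial\Omega\cap B_\rho(x_0)$ fails, and the comparison principle cannot be invoked.

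Even if the boundary sign issue could be patched, your conversion step fails. You claim $\partial_n u_\beta(x_0)$ decomposes into $u_{\beta\beta}$ plus "lower-order boundary terms" controlled by first-order bounds. In fact $\partial_n u_\beta = h_{p_kp_l}(Du)\,u_{ln}u_k + u_{\beta n}$, and the first summand involves the second derivative $u_{ln}$, which is precisely the type of quantity this lemma is trying to bound; it is not controlled by $\|h\|_{C^2}$, $\|\partial\Omega\|_{C^2}$ and first-order data.

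The paper avoids both difficulties by a more economical choice: it uses $\Psi:=-h(Du)+C_0\tilde h+A|x-x_0|^2$. Since both $h(Du)$ and $\tilde h$ vanish identically on $\partial\Omega$, $\Psi=A|x-x_0|^2\geq0$ there with no sign ambiguity and no tangential estimate needed. The interior computation $L\bigl(h(Du)\bigr)=F^{ij}h_{p_kp_l}u_{ki}u_{lj}+h_{p_k}f_k$ is bounded directly by \eqref{e1.3}–\eqref{e1.4}, so $L\Psi\leq0$ for $C_0$ large, and the maximum principle gives $\Psi_\beta(x_0)\geq0$. The decisive point is the exact identity $\partial_\beta\bigl(h(Du)\bigr)=h_{p_k}(Du)\,\beta^l u_{kl}=\beta^k\beta^l u_{kl}=u_{\beta\beta}$, with no extra curvature-of-$h$ terms and no stray second derivatives. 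Thus $u_{\beta\beta}(x_0)\leq C_0\,\partial_\beta\tilde h(x_0)\leq C$ immediately, bypassing both your boundary-positivity problem and your conversion problem. You should replace $u_\beta$ by $-h(Du)$ as the lead term in your barrier and differentiate in the $\beta$ direction rather than the $e_n$ direction.
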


\begin{proof}
Let $x_0\in\partial\Omega$ satisfy $u_{\beta\beta}(x_0)=\max_{\partial\Omega}u_{\beta\beta}$. Consider the barrier function
$$\Psi:=-h(Du)+C_0\tilde{h}+A|x-x_0|^2.$$
For any $x\in \partial\Omega$, $Du(x)\in \partial\tilde{\Omega}$, then $h(Du)=0$. It is clear that $\tilde{h}=0$ on $\partial\Omega$. As the proof of (\ref{eqq2.13}), we can find the constants $C_0$ and $A$ such that
\begin{equation}\label{eq3.6}
\begin{cases}
   L\Psi\leq 0,\quad x\in\Omega_{\rho},\\
   \Psi\geq 0 ,\quad\  x\in\partial\Omega_{\rho}.
\end{cases}
\end{equation}
By the maximum principle, we get
$$\Psi(x)\geq 0,\quad\quad x\in \Omega_\rho.$$
Combining it with $\Psi(x_0)=0$ we obtain $\Psi_{\beta}(x_0)\geq 0$, which implies
$$\frac{\partial h}{\partial \beta}(Du(x_0))\leq C_0.$$

On the other hand, we see that at $x_0$,
$$\frac{\partial h}{\partial \beta}=\langle Dh(Du),\beta\rangle=\frac{\partial h}{\partial p_k}u_{kl}\beta^l=\beta^ku_{kl}\beta^l=u_{\beta\beta}.$$
Therefore,
$$u_{\beta\beta}=\frac{\partial h}{\partial \beta}\leq C.$$
\end{proof}

Next, we estimate the double tangential derivative.
\begin{lemma}\label{lem3.3}
Let $F$ satisfy the structure conditions (\ref{e1.2.0})-(\ref{e1.4}) and $f\in {\mathscr{A}}_\delta$. If $u$ is a smooth uniformly convex solution of (\ref{e1.8}), then there exists a positive constant $C$ depending only on $\Omega$, $\tilde{\Omega}$, $\Lambda_1$, $\Lambda_2$ and $\delta$, such that
\begin{equation}\label{eq3.7}
   \max_{\partial\Omega}u_{\varsigma\varsigma} \leq C.
\end{equation}
\end{lemma}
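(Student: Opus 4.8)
The plan is to exploit the symmetry between the problem \eqref{e1.8} for $u$ and its dual problem \eqref{eqq2.9} for the Legendre transform $\tilde u$, reducing the bound on $u_{\varsigma\varsigma}$ to a bound on $\tilde u_{\tilde\beta\tilde\beta}$, which has already been established in Lemma \ref{lem3.2} applied to $\tilde u$. The key algebraic fact is that a pure tangential second derivative of $u$ on $\partial\Omega$ corresponds, under the Legendre transform and the identifications $\nu=D\tilde h$, $\tilde\nu=Dh$, to a quantity controlled by the dual obliqueness direction $\tilde\beta$ on $\partial\tilde\Omega$. Concretely, for a unit tangential vector $\varsigma$ at $x_0\in\partial\Omega$ one writes $\varsigma$ in the basis adapted to $D^2u(x_0)$; since $D u$ maps $\partial\Omega$ diffeomorphically onto $\partial\tilde\Omega$ and $D^2\tilde u=(D^2u)^{-1}$, the image direction $D^2u\cdot\varsigma$ relates tangential data for $u$ to normal/oblique data for $\tilde u$. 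I would first make this correspondence precise, using $\tilde h_{p_k}\tilde h_{p_l}\tilde u_{kl}=\nu_i\nu_j u^{ij}$ (already quoted in the proof of Lemma \ref{l2.3}) together with \eqref{e2.11.1} and its dual.

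The second step is to run the barrier argument directly, paralleling the proof of Lemma \ref{lem3.2} but for the dual solution. Let $\tilde x_0=Du(x_0)$ realize $\max_{\partial\tilde\Omega}\tilde u_{\tilde\beta\tilde\beta}$, and consider the barrier
$$\tilde\Psi:=-\tilde h(D\tilde u)+\tilde C_0 h+\tilde A|\tilde x-\tilde x_0|^2$$
on $\tilde\Omega_r$. As in the proof of \eqref{eqq2.13a}, using $D^2h\le-\theta I$, the structure condition \eqref{e1.3} for $\tilde F$, the bound \eqref{eqq2.13aa}, and the smallness of $|Df|$ (which is what makes the first-order term $f_{p_i}\partial_i$ in $\tilde L$ harmless), one can choose $\tilde C_0\gg\tilde A\gg1$ so that $\tilde L\tilde\Psi\le0$ in $\tilde\Omega_r$ and $\tilde\Psi\ge0$ on $\partial\tilde\Omega_r$. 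The maximum principle plus $\tilde\Psi(\tilde x_0)=0$ gives $\tilde\Psi_{\tilde\beta}(\tilde x_0)\ge0$, hence $\partial_{\tilde\beta}\tilde h(D\tilde u)\le\tilde C_0$; and exactly as in Lemma \ref{lem3.2}, $\partial_{\tilde\beta}\tilde h=\tilde u_{\tilde\beta\tilde\beta}$ at $\tilde x_0$, so $\max_{\partial\tilde\Omega}\tilde u_{\tilde\beta\tilde\beta}\le C$.

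Finally I would convert this back. Fix $x_0\in\partial\Omega$ with $u_{\varsigma\varsigma}(x_0)=\max_{\partial\Omega}u_{\varsigma\varsigma}$, diagonalize $D^2u(x_0)$, and express $u_{\varsigma\varsigma}$ in terms of the eigenvalues $\lambda_i\in[\,\cdot\,,\omega]$ (Lemma \ref{l2.1}) and the dual quantities $\tilde u_{\tilde\beta\tilde\beta}$, $\langle\tilde\beta,\tilde\nu\rangle=\langle\beta,\nu\rangle$; the strict obliqueness estimate \eqref{e2.11} and its dual give the needed uniform lower bound on $\langle\tilde\beta,\tilde\nu\rangle$, so that the identity analogous to \eqref{eq3.4} for $\tilde u$ together with the already-proven $\max_{\partial\tilde\Omega}\tilde u_{\tilde\beta\tilde\beta}\le C$ and $\max_{\partial\tilde\Omega}\tilde u_{\tilde\varsigma\tilde\varsigma}\le C$ (this last bound being precisely $u_{\beta\beta}$-type data for $u$, controlled by Lemma \ref{lem3.2}) closes the loop. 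The main obstacle I anticipate is bookkeeping: making the Legendre-duality correspondence between "$\varsigma\varsigma$ for $u$" and "$\tilde\beta\tilde\beta$ for $\tilde u$" genuinely rigorous rather than merely formal — one must track carefully how the tangential/normal splitting transforms, verify that no degeneracy is introduced (which is where the uniform two-sided eigenvalue bounds of Lemma \ref{l2.1} and the obliqueness bound \eqref{e2.11} are essential), and confirm the $|Df|$-smallness threshold used for the dual barrier is the same universal one already fixed.
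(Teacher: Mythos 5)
Your proposed reduction does not work, and the gap is not mere ``bookkeeping'' but a substantive error in the duality dictionary. Under the Legendre transform, with $\tilde x_0=Du(x_0)$, one has $\tilde\beta(\tilde x_0)=D\tilde h(D\tilde u(\tilde x_0))=D\tilde h(x_0)=\nu(x_0)$, and therefore
\begin{equation*}
\tilde u_{\tilde\beta\tilde\beta}(\tilde x_0)=\tilde\beta^k\tilde\beta^l\tilde u_{kl}=\nu_k\nu_l u^{kl}(x_0)=:u^{\nu\nu}(x_0),
\end{equation*}
i.e.\ $\tilde u_{\tilde\beta\tilde\beta}$ is the normal--normal entry of $(D^2u)^{-1}$, not $u_{\varsigma\varsigma}$. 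Via the identity (\ref{e2.11.1}) this only tells you $\langle\beta,\nu\rangle^2=u_{\beta\beta}\cdot u^{\nu\nu}$, so an upper bound on $\tilde u_{\tilde\beta\tilde\beta}$ yields a \emph{lower} bound on $u_{\beta\beta}$, nothing about $u_{\varsigma\varsigma}$. The correct dual of ``$u_{\varsigma\varsigma}$'' is ``$\tilde u_{\tilde\varsigma\tilde\varsigma}$'' (with a scaling factor $|D^2u\,\varsigma|^2$ that is itself uncontrolled at this stage), and the paper must prove both Lemma \ref{lem3.3} and its dual Lemma \ref{lem3.3a} by the same new mechanism; neither is a free consequence of the $\beta\beta$-bounds. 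Your citation of ``$\max_{\partial\tilde\Omega}\tilde u_{\tilde\varsigma\tilde\varsigma}\le C$'' as known input is circular, since that estimate is exactly as hard as the one you are trying to prove.

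What the paper actually does is qualitatively different and contains the idea your proposal is missing. Set $M=u_{11}(x_0)=\max_{\partial\Omega}u_{\varsigma\varsigma}$. Using the decomposition (\ref{eq3.8}), Lemma \ref{lem3.2}, and the obliqueness estimate (\ref{e2.11}), one constructs the auxiliary function $w$ in (\ref{eq3.11}) which vanishes at $x_0$ and is nonnegative on $\partial\Omega$, satisfying $Lw\le C\sum_iF^{ii}$; adding $C_0\tilde h$ and applying the maximum principle produces the \emph{upper} bound $u_{11\beta}(x_0)\le CM$. The second ingredient is differentiating the boundary condition $h(Du)=0$ twice in the $e_1$-direction, which together with the uniform concavity of $h$ gives the \emph{lower} bound $u_{11\beta}(x_0)=h_{p_k}u_{k11}=-h_{p_kp_l}u_{k1}u_{l1}\ge\theta M^2$. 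The clash $\theta M^2\le CM$ then forces $M\le C/\theta$. Note the essential quadratic-versus-linear tension in $M$; your sketch is entirely linear and would at best reproduce Lemma \ref{lem3.2}, not Lemma \ref{lem3.3}.
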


\begin{proof}
Assume that $ u_{\varsigma\varsigma}|_{\partial\Omega}$ attains its maximum at $x_0\in\partial \Omega$. Let
$$M:=u_{11}(x_0)=\max_{\partial\Omega}u_{\varsigma\varsigma},$$
and $e_n$ be the unit inward normal vector of $\partial\Omega$ at $x_0$.

For any $x\in \partial\Omega$,  we have  by (\ref{eq3.3}),
\begin{equation}\label{eq3.8}
\begin{aligned}
u_{\xi\xi}&=|\varsigma(\xi)|^2u_{\varsigma\varsigma}+ \frac{\langle \nu,\xi\rangle^2}{\langle \beta,\nu\rangle^2}u_{\beta\beta}\\
          &\leq \left(1+C\langle \nu,\xi\rangle^2-2\langle \nu,\xi\rangle \frac{\langle \beta^T,\xi\rangle}{\langle \beta,\nu\rangle}\right)M
                 + \frac{\langle \nu,\xi\rangle^2}{\langle \beta,\nu\rangle^2}u_{\beta\beta}.
\end{aligned}
\end{equation}
Without loss of generality, we assume that $M\geq 1$. Then by (\ref{e2.11}) and (\ref{eq3.5}) we have
\begin{equation}\label{eq3.9}
  \frac{u_{\xi\xi}}{M}+2\langle \nu,\xi\rangle \frac{\langle \beta^T,\xi\rangle}{\langle \beta,\nu\rangle}
             \leq 1+C\langle \nu,\xi\rangle^2.
\end{equation}
Let $\xi=e_1$, then
\begin{equation}\label{eq3.10}
  \frac{u_{11}}{M}+2\langle \nu,e_1\rangle \frac{\langle \beta^T,e_1\rangle}{\langle \beta,\nu\rangle}
             \leq 1+C\langle \nu,e_1\rangle^2.
\end{equation}
We see that the function
\begin{equation}\label{eq3.11}
w:=A|x-x_0|^2-\frac{u_{11}}{M}-2\langle \nu,e_1\rangle \frac{\langle \beta^T,e_1\rangle}{\langle \beta,\nu\rangle}+C\langle \nu,e_1\rangle^2+1
\end{equation}
satisfies
$$w|_{\partial\Omega}\geq 0,\quad  w(x_0)=0.$$
Then, it follows by (\ref{eq3.1}) that we can choose the constant $A$ large enough such that
$$w|_{\Omega \cap \partial B_{\rho}(x_0)} \geq 0.$$

By (\ref{e1.8}), we deduce that
$$Lw\leq  C\sum^{n}_{i=1} F^{ii}.$$
As in the proof of Lemma \ref{lem3.2}, we consider the function
$$\Upsilon:=w+C_0\tilde{h}.$$
A standard barrier argument shows that
$$\Upsilon_{\beta}(x_0)\geq0.$$
Therefore,
\begin{equation}\label{eq3.12}
 u_{11\beta}(x_0)\leq CM.
\end{equation}
On the other hand, differentiating $h(Du)$ twice in the direction $e_1$ at $x_0$, we have
$$h_{p_k}u_{k11}+h_{p_kp_l}u_{k1}u_{l1}=0.$$
The concavity of $h$ yields that
$$h_{p_k}u_{k11}=-h_{p_kp_l}u_{k1}u_{l1}\geq \theta M^2.$$
Combining it with $h_{p_k}u_{k11}=u_{11\beta}$, and using (\ref{eq3.12}) we obtain
$$\theta M^2\leq CM.$$
Then we get the upper bound of $M=u_{11}(x_0)$ and thus the desired result follows.
\end{proof}

By Lemma \ref{lem3.2}, Lemma \ref{lem3.3} and (\ref{eq3.4}), we obtain the $C^2$ a-priori estimate on the boundary.
\begin{lemma}\label{lem3.4}
Let $F$ satisfy the structure conditions (\ref{e1.2.0})-(\ref{e1.4}) and $f\in {\mathscr{A}}_\delta$.  If $u$ is a smooth uniformly convex solution of (\ref{e1.8}), then there exists a positive constant $C$ depending only on $\Omega$, $\tilde{\Omega}$, $\Lambda_1$, $\Lambda_2$ and $\delta$, such that
\begin{equation}\label{eq3.13}
\max_{\partial\Omega}|D^2u| \leq C.
\end{equation}
\end{lemma}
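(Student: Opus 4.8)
The plan is to assemble this from pieces that are already in place: the pointwise decomposition (\ref{eq3.4}) of an arbitrary second derivative at a boundary point, together with the two boundary bounds Lemma \ref{lem3.2} and Lemma \ref{lem3.3}. Since $u$ is uniformly convex, $D^2u>0$ on $\bar\Omega$, so at every point $|D^2u|$ is comparable (with a dimensional constant) to the largest eigenvalue $\lambda_{\max}(D^2u)=\sup_{|\xi|=1}u_{\xi\xi}$. Hence it suffices to bound $u_{\xi\xi}(x)$ uniformly over all unit vectors $\xi$ and all $x\in\partial\Omega$.

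First I would fix $x\in\partial\Omega$ and a unit vector $\xi$, and split $\xi=\varsigma(\xi)+\frac{\langle\nu,\xi\rangle}{\langle\beta,\nu\rangle}\beta$ with $\varsigma(\xi)$ the tangential component, exactly as in Section 4. Expanding $u_{\xi\xi}$ bilinearly and using that the mixed term $u_{\beta\varsigma}$ vanishes by (\ref{eq3.2})---this is the one structural input, coming from differentiating the boundary condition $h(Du)=0$ in a tangential direction---one obtains precisely (\ref{eq3.4}), namely
$$u_{\xi\xi}=|\varsigma(\xi)|^{2}u_{\varsigma\varsigma}+\frac{\langle\nu,\xi\rangle^{2}}{\langle\beta,\nu\rangle^{2}}u_{\beta\beta}.$$
Then (\ref{eq3.3}) gives $|\varsigma(\xi)|^{2}\leq C$, while the strict obliqueness estimate (\ref{e2.11}) gives $\langle\beta,\nu\rangle\geq 1/C_{1}$, so that $\langle\nu,\xi\rangle^{2}/\langle\beta,\nu\rangle^{2}\leq C_{1}^{2}$; together these yield $u_{\xi\xi}\leq C(u_{\varsigma\varsigma}+u_{\beta\beta})$, with $C$ depending only on $\Omega$, $\tilde\Omega$, $\Lambda_{1}$, $\Lambda_{2}$, $\delta$ and the obliqueness constant $C_{1}$. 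The degenerate case $\varsigma(\xi)=0$ (i.e. $\xi\parallel\beta$) is immediate, since then $u_{\xi\xi}=|\beta|^{-2}u_{\beta\beta}$.

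Finally I would invoke Lemma \ref{lem3.2} and Lemma \ref{lem3.3} to replace $u_{\beta\beta}$ and $u_{\varsigma\varsigma}$ on the right-hand side by their boundary maxima, each bounded by a universal constant depending only on $\Omega$, $\tilde\Omega$, $\Lambda_{1}$, $\Lambda_{2}$ and $\delta$; recalling further that $C_{1}$ itself depends only on $F$, $\Theta_{0}$, $\Omega$, $\tilde\Omega$, $\delta$ (Lemma \ref{l2.3}) and that $\Lambda_{1},\Lambda_{2}$ are controlled by $F$, $\Theta_{0}$, $\delta$, the dependence on $C_{1}$ is absorbed. Taking the supremum over all unit $\xi$ and over $x\in\partial\Omega$, and using $|D^2u|\sim\lambda_{\max}(D^2u)$ by positivity of the Hessian, gives (\ref{eq3.13}). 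There is no genuine obstacle at this step: all the analysis has been carried out in the obliqueness estimate of Section 3 and in Lemmas \ref{lem3.1}--\ref{lem3.3}, and this lemma is the purely algebraic assembly of those; the only point requiring attention is bookkeeping of the constants so that the final bound depends solely on the quantities listed.
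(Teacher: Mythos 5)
Your proof is correct and is exactly the argument the paper intends: the paper states Lemma \ref{lem3.4} as an immediate consequence of Lemma \ref{lem3.2}, Lemma \ref{lem3.3} and the decomposition (\ref{eq3.4}), which is precisely the assembly you carry out, with the extra (correct) bookkeeping that the obliqueness constant $C_1$ and $\Lambda_1,\Lambda_2$ reduce to dependence on the listed data and that $|D^2u|$ is controlled by $\sup_{|\xi|=1}u_{\xi\xi}$ since $D^2u>0$.
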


In terms of Lemma \ref{lem3.1} and Lemma \ref{lem3.4}, we see that
\begin{lemma}\label{lem3.5}
Let $F$ satisfy the structure conditions (\ref{e1.2.0})-(\ref{e1.4}) and $f\in {\mathscr{A}}_\delta$. If $u$ is a smooth uniformly convex solution of (\ref{e1.8}), then there exists a positive constant $C$ depending only on $\Omega$, $\tilde{\Omega}$, $\Lambda_1$, $\Lambda_2$ and $\delta$, such that
\begin{equation}\label{eq3.14}
\max_{\bar{\Omega}}|D^2u| \leq C.
\end{equation}
\end{lemma}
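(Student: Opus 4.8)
The plan is to read Lemma~\ref{lem3.5} off from the two preceding lemmas: Lemma~\ref{lem3.1}, which trades the interior $C^2$ bound for the boundary one at the cost of a harmless $\sup_\Omega|D^2f|$ term, and Lemma~\ref{lem3.4}, which bounds $|D^2u|$ on $\partial\Omega$. First I would check that the hypotheses of Lemma~\ref{lem3.5} --- $F$ satisfying the structure conditions (\ref{e1.2.0})--(\ref{e1.4}), $f\in\mathscr{A}_\delta$, and $u$ a smooth uniformly convex solution of (\ref{e1.8}) --- cover the hypotheses of both: Lemma~\ref{lem3.1} needs only (\ref{e1.2}), (\ref{e1.2.1}), (\ref{e1.3}), and Lemma~\ref{lem3.4} needs the full list together with $f\in\mathscr{A}_\delta$. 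Moreover, by Lemma~\ref{l2.1} the eigenvalues of $D^2u$ stay in $\Gamma^+_{]\mu,\omega[}$ for $\mu,\omega$ depending only on $F$, $\Theta_0$ and $\delta$, so the constants $\Lambda_1,\Lambda_2$ in (\ref{e1.3})--(\ref{e1.4}) --- and hence every constant produced downstream --- are universal in the claimed sense.

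With that in hand the estimate is a single substitution. Lemma~\ref{lem3.4} gives a constant $C_1=C_1(\Omega,\tilde\Omega,\Lambda_1,\Lambda_2,\delta)$ with $\max_{\partial\Omega}|D^2u|\le C_1$. Inserting this into Lemma~\ref{lem3.1},
$$\sup_{\Omega}|D^2u|\ \le\ \max_{\partial\Omega}|D^2u|+C_2\sup_\Omega|D^2f|\ \le\ C_1+C_2\sup_\Omega|D^2f|,$$
where $C_2$ depends only on $n$, $\Omega$, $\tilde\Omega$, $\Lambda_1$ and $\operatorname{diam}(\Omega)$. Since $u\in C^\infty(\bar\Omega)$ we have $\sup_\Omega|D^2u|=\max_{\bar\Omega}|D^2u|$, so $\max_{\bar\Omega}|D^2u|\le C_1+C_2\sup_\Omega|D^2f|=:C$, which is the assertion.

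The only thing worth a remark is the term $\sup_\Omega|D^2f|$: it is finite because $f$ is a prescribed smooth function, and --- crucially for the application to Theorem~\ref{t1.1}, where $f(x)=\kappa\cdot x+c$ is affine --- it vanishes identically, leaving $C$ dependent only on $\Omega$, $\tilde\Omega$, $\Lambda_1$, $\Lambda_2$ and $\delta$. So there is no genuine obstacle in Lemma~\ref{lem3.5} itself; the substantive work sits upstream, in the interior maximum-principle barrier of Lemma~\ref{lem3.1} and, above all, in the boundary estimates of Lemmas~\ref{lem3.2} and~\ref{lem3.3}, where the bound on $u_{\varsigma\varsigma}$ is extracted by differentiating $h(Du)=0$ twice, using the strict concavity of $h$ to produce a term quadratic in the maximal tangential second derivative, and closing the loop via the strict obliqueness estimate (\ref{e2.11}) together with the global bound of Lemma~\ref{lem3.1}. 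The one point to watch while assembling Lemma~\ref{lem3.5} is simply to confirm that all constants generated along the way were tracked in terms of $\Omega$, $\tilde\Omega$ (through the defining functions $h,\tilde h$ and their concavity constants $\theta,\tilde\theta$), $\Lambda_1$, $\Lambda_2$ and $\delta$ only, so that the resulting $C$ is universal.
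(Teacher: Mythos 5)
Your proof is correct and is exactly the paper's route: the paper offers no separate argument for Lemma~\ref{lem3.5} beyond the words ``In terms of Lemma~\ref{lem3.1} and Lemma~\ref{lem3.4}, we see that\dots'', i.e.\ precisely the substitution you carry out. Your side remark that the resulting constant a priori also involves $\sup_\Omega|D^2f|$ (vanishing in the affine application) is a fair observation that the paper glosses over, but it does not affect the validity of the lemma in the setting where it is used.
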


In the following,  we describe the positive lower bound of $D^{2}u$.
For (\ref{eqq2.9}), in consider of the Legendre transformation of $u$, define
$$\tilde L:= \tilde F^{ij}\partial_{ij}+f_{p_i}\partial_i.$$
Then our goal is to show the upper bound of $D^{2}\tilde{u}$ and the argument is very similar to the one used in the proof of Lemma \ref{lem3.5}
by the concavity of $f$ and the condition that $|D f|$ being sufficiently small.
For the convenience of readers, we give the details.

At the beginning of  the repeating procedure, we have
\begin{lemma}\label{lem3.1-1}
Suppose that $f$ is concave on $\Omega$. If $\tilde{u}$ is a smooth uniformly convex solution of (\ref{eqq2.9}), then there holds
\begin{equation}\label{eq3.1a}
   \sup_{\tilde{\Omega}}|D^2\tilde{u}|\leq \max_{\partial\tilde{\Omega}} |D^2\tilde{u}|.
\end{equation}
\end{lemma}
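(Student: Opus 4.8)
The plan is to mimic almost verbatim the global-to-boundary reduction carried out for $u$ in Lemma \ref{lem3.1}, now applied to the dual solution $\tilde u$ of the dual equation (\ref{eqq2.9}), the only structural change being that the operator $\tilde L$ carries a first-order term $f_{p_i}\partial_i$ coming from the right-hand side $-f(D\tilde u)-c$. First I would fix a unit vector $\xi$ and differentiate the equation $\tilde F[D^2\tilde u]=-f(D\tilde u)-c$ twice in the direction $\xi$. On the left this produces $\tilde L\tilde u_{\xi\xi}+\tilde F^{ij,rs}\tilde u_{ij\xi}\tilde u_{rs\xi}$ exactly as in (\ref{e3.0}), and by Remark \ref{r2.2} the function $\tilde F$ inherits the concavity condition (\ref{e1.2.1}) (equivalently (\ref{e1.2.2})), so the second-order term has a favorable sign and can be dropped. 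On the right, differentiating $-f(D\tilde u)$ twice in $\xi$ gives $-f_{p_k}\tilde u_{k\xi\xi}-f_{p_kp_l}\tilde u_{k\xi}\tilde u_{l\xi}$; moving the term $-f_{p_k}\tilde u_{k\xi\xi}$ to the left is precisely what converts $\tilde F^{ij}\partial_{ij}$ into $\tilde L=\tilde F^{ij}\partial_{ij}+f_{p_i}\partial_i$. Hence one obtains
$$\tilde L\tilde u_{\xi\xi}\ge -f_{p_kp_l}\tilde u_{k\xi}\tilde u_{l\xi}\ge 0,$$
where the last inequality is exactly the concavity of $f$ on $\Omega$ (recall $D^2f\le 0$). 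This is the analogue of (\ref{e3.0}) but, crucially, with right-hand side $\ge 0$ rather than merely $\ge f_{\xi\xi}$, which is why no term involving $\sup|D^2 f|$ appears in (\ref{eq3.1a}).

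The remaining step is the maximum-principle comparison. Since $\tilde L\tilde u_{\xi\xi}\ge 0$ in $\tilde\Omega$ and $\tilde L$ is a linear elliptic operator with no zeroth-order term (the coefficients $\tilde F^{ij}$ are positive definite on $\Gamma^+_n$ by (\ref{e1.2}), and on the admissible set $\Gamma^+_{]\omega^{-1},\mu^{-1}[}$ they are bounded and bounded below thanks to (\ref{e1.3}), (\ref{e1.4}) via Remark \ref{r2.2}), the classical maximum principle applies directly: the subsolution $\tilde u_{\xi\xi}$ attains its maximum over $\bar{\tilde\Omega}$ on the boundary $\partial\tilde\Omega$. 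Taking the supremum over unit vectors $\xi$, and using that $\tilde u$ is uniformly convex so that $|D^2\tilde u|$ is comparable to $\max_\xi \tilde u_{\xi\xi}$, yields
$$\sup_{\tilde\Omega}|D^2\tilde u|\le \max_{\partial\tilde\Omega}|D^2\tilde u|,$$
which is (\ref{eq3.1a}). Note that no exponential barrier of the type used in Lemma \ref{lem3.1} is needed here, because the inhomogeneous term has the correct sign; this is the one genuine simplification relative to the proof for $u$.

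The only point demanding care — and the one I would flag as the main obstacle — is bookkeeping of the sign and of which operator one ends up with after differentiating $-f(D\tilde u)-c$: one must verify that the chain rule produces exactly the first-order term $f_{p_i}\partial_i\tilde u_{\xi\xi}$ needed to complete $\tilde L$, with the correct sign, and that the leftover term $-f_{p_kp_l}\tilde u_{k\xi}\tilde u_{l\xi}$ is nonnegative by concavity of $f$. Once that identity is pinned down, the argument is a routine application of the maximum principle exactly as in Lemma \ref{lem3.1}, and the constant in (\ref{eq3.1a}) is in fact absent (the estimate reduces the interior bound purely to the boundary bound). I would also remark in passing that the hypothesis that $|Df|$ is small is not actually used in this particular lemma — it is only concavity of $f$ that matters here — and that the smallness will be invoked later, in the boundary estimates for $\tilde u$, just as it was in Lemma \ref{l2.3}.
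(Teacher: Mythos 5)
Your argument matches the paper's proof essentially verbatim: differentiate (\ref{eqq2.9}) twice, absorb the first-order term into $\tilde L$, discard the two favorably-signed quadratic terms coming from concavity of $\tilde F$ and $f$, and apply the maximum principle, with no barrier needed. One small bookkeeping slip worth fixing: you cite Remark \ref{r2.2} for concavity of $\tilde F$ and call (\ref{e1.2.1}) and (\ref{e1.2.2}) ``equivalent,'' but Remark \ref{r2.2} concerns only (\ref{e1.3})--(\ref{e1.4}), and the paper explicitly notes (in the remark following (\ref{e1.4})) that (\ref{e1.2.2}) cannot be deduced from (\ref{e1.2}) and (\ref{e1.2.1}); the concavity of $\tilde F$ you invoke is the separately assumed structure condition (\ref{e1.2.2}).
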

\begin{proof}
For any unit vector $\tilde\xi$, differentiating the equation in (\ref{eqq2.9}) twice in direction $\tilde\xi$ gives
\begin{equation*}
\tilde{L}\tilde{u}_{\tilde\xi\tilde\xi}+\tilde{F}^{ij,rs}\tilde{u}_{ij\tilde\xi}\tilde{u}_{rs\tilde\xi}+\frac{\partial^{2}f}{\partial p_{i}\partial p_{j}}\tilde{u}_{i\tilde\xi}\tilde{u}_{j\tilde\xi}=0.
\end{equation*}
Then by the concavity of $\tilde{F}$ on $\Gamma^+_n$ and $f$ on $\Omega$, we have
\begin{equation*}
\tilde{L}\tilde{u}_{\tilde\xi\tilde\xi}=-\tilde{F}^{ij,rs}\tilde{u}_{ij\tilde\xi}\tilde{u}_{rs\tilde\xi}-\frac{\partial^{2}f}{\partial p_{i}\partial p_{j}}\tilde{u}_{i\tilde\xi}\tilde{u}_{j\tilde\xi}\geq 0.
\end{equation*}
Then by the maximum principle we obtain
$$\sup_{\tilde{\Omega}}\tilde{u}_{\tilde\xi\tilde\xi}\leq \sup_{\partial\tilde{\Omega}}\tilde{u}_{\tilde\xi\tilde\xi}.$$
This completes the proof of (\ref{eq3.1a}).
\end{proof}

Recall that $\tilde{\beta}=(\tilde{\beta}^{1}, \cdots, \tilde{\beta}^{n})$ with $\tilde{\beta}^{k}:=\tilde{h}_{p_{k}}(D\tilde{u})$ and $\tilde{\nu}=(\tilde{\nu}_{1}, \tilde{\nu}_{2},\cdots,\tilde{\nu}_{n})$ is the unit inward normal vector of $\partial\tilde{\Omega}$. Similar to the discussion of (\ref{eq3.2}), (\ref{eq3.3}) and (\ref{eq3.4}), for any tangential direction $\tilde \varsigma$, we have
\begin{equation}\label{eq3.2a}
   u_{\tilde\beta \tilde\varsigma}=\tilde h_{p_k}(D\tilde u)\tilde u_{k\tilde\varsigma}=0.
\end{equation}
Then the second order derivative of $\tilde u$ on the boundary is also controlled by $u_{\tilde\beta \tilde\varsigma}$, $u_{\tilde\beta \tilde\beta}$ and $u_{\tilde\varsigma\tilde\varsigma}$.

At $\tilde x\in \partial\tilde\Omega$, any unit vector $\tilde\xi$ can be written in terms of a tangential component $\tilde\varsigma(\tilde\xi)$ and a component in the direction $\tilde\beta$ by
$$\tilde\xi=\tilde\varsigma(\tilde\xi)+\frac{\langle \tilde\nu,\tilde\xi\rangle}{\langle\tilde\beta,\tilde\nu\rangle}\tilde\beta,$$
where
$$\tilde\varsigma(\tilde\xi):=\tilde\xi-\langle \tilde\nu,\tilde\xi\rangle \tilde\nu-\frac{\langle \tilde\nu,\tilde\xi\rangle}{\langle\tilde\beta,\tilde\nu\rangle}\tilde\beta^T,$$
and
$$\tilde\beta^T:=\tilde\beta-\langle \tilde\beta,\tilde\nu\rangle \tilde\nu.$$
Therefore,
\begin{equation}\label{eq3.3a}
|\tilde\varsigma(\tilde\xi)|\leq C,
\end{equation}
and
\begin{equation}\label{eq3.4a}
u_{\tilde\xi\tilde\xi}\leq C(u_{\tilde\varsigma\tilde\varsigma}+u_{\tilde\beta\tilde\beta}),
\end{equation}
where $\tilde\varsigma:=\frac{\tilde\varsigma(\tilde\xi)}{|\tilde\varsigma(\tilde\xi)|}$ and $C$ depends only on $\Omega$, $\tilde \Omega$, $\Lambda_1$, $\Lambda_2$, $\delta$ and the constant $C_1$ in (\ref{e2.11}). Then we also only need to estimate $u_{\tilde\beta\tilde\beta}$ and $u_{\tilde\varsigma\tilde\varsigma}$ respectively.

Indeed, as shown by Lemma \ref{lem3.2}, we state
\begin{lemma}\label{lem3.2a}
Let $F$ satisfy the structure conditions (\ref{e1.2.0})-(\ref{e1.4}) and $f\in {\mathscr{A}}_\delta$. If $\tilde{u}$ is a smooth uniformly convex solution of (\ref{eqq2.9}) and $|D f|$ is sufficiently small, then there exists a positive constant $C$ depending only on $\Omega$, $\tilde{\Omega}$, $\Lambda_1$, $\Lambda_2$ and $\delta$, such that
\begin{equation}\label{eq3.5a}
   \max_{\partial\Omega}\tilde{u}_{\tilde{\beta}\tilde{\beta}} \leq C.
\end{equation}
\end{lemma}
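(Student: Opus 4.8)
The plan is to transcribe the barrier argument of Lemma~\ref{lem3.2} to the dual problem~(\ref{eqq2.9}), with $L$ replaced by $\tilde L=\tilde F^{ij}\partial_{ij}+f_{p_i}\partial_i$. The only genuinely new feature is that the drift term $f_{p_i}\partial_i$ generates extra contributions; these will be absorbed using that $|Df|$ is small together with the uniform bounds $\Lambda_1\leq\sum_{i=1}^n\tilde F^{ii}\leq\Lambda_2$ and $\sum_{i=1}^n\tilde F^{ii}\mu_i^2\leq\Lambda_2$, which hold since $\tilde F$ inherits the structure conditions (Remark~\ref{r2.2}).

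First I would choose $\tilde x_0\in\partial\tilde\Omega$ with $\tilde u_{\tilde\beta\tilde\beta}(\tilde x_0)=\max_{\partial\tilde\Omega}\tilde u_{\tilde\beta\tilde\beta}$ and consider
$$\Psi:=-\tilde h(D\tilde u)+\tilde C_0\,h+\tilde A\,|\tilde x-\tilde x_0|^2,$$
where $h$ is the defining function of $\tilde\Omega$ (so $h\equiv0$ on $\partial\tilde\Omega$), $\tilde h$ is the defining function of $\Omega$ (so $\tilde h(D\tilde u)\equiv0$ on $\partial\tilde\Omega$ by the boundary condition in~(\ref{eqq2.9})), and $\tilde C_0\gg\tilde A>0$ are to be fixed. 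Then $\Psi\geq0$ on $\partial\tilde\Omega$; and since $\tilde h(D\tilde u)$ is bounded (as $\tilde h\in C^\infty(\bar\Omega)$ and $D\tilde u(\bar{\tilde\Omega})\subset\bar\Omega$) while $h\geq0$ inside $\tilde\Omega$, one gets $\Psi\geq0$ on $\tilde\Omega\cap\partial B_r(\tilde x_0)$ once $\tilde A$ is large, exactly as in Lemma~\ref{lem3.2}.

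The core step is the inequality $\tilde L\Psi\leq0$ on $\tilde\Omega_r:=\tilde\Omega\cap B_r(\tilde x_0)$. Using $\tilde L\tilde u_l=0$ (established in the proof of Lemma~\ref{l2.3}) one finds $\tilde L\big(\tilde h(D\tilde u)\big)=\tilde F^{ij}\tilde h_{p_kp_l}\tilde u_{ki}\tilde u_{lj}$, which in a basis diagonalizing $D^2\tilde u$ equals $\sum_i\tilde F^{ii}\mu_i^2\,\tilde h_{p_ip_i}$ and is therefore bounded (since $\tilde h\in C^\infty(\bar\Omega)$ and $\sum_i\tilde F^{ii}\mu_i^2\leq\Lambda_2$), so $\tilde L\big(-\tilde h(D\tilde u)\big)\leq C$. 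Next, $D^2h\leq-\theta I$ gives $\tilde F^{ij}h_{ij}\leq-\theta\sum_i\tilde F^{ii}$ and $|f_{p_i}h_i|\leq|Df|\max_{\bar{\tilde\Omega}}|Dh|$, while $\tilde L(|\tilde x-\tilde x_0|^2)\leq2\sum_i\tilde F^{ii}+2|Df|\operatorname{diam}(\tilde\Omega)$. Adding these up,
$$\tilde L\Psi\leq C+(2\tilde A-\tilde C_0\theta)\sum_{i=1}^n\tilde F^{ii}+\Big(\tilde C_0\max_{\bar{\tilde\Omega}}|Dh|+2\tilde A\operatorname{diam}(\tilde\Omega)\Big)|Df|.$$
Since $\sum_i\tilde F^{ii}\geq\Lambda_1$, one first takes $\tilde C_0\gg\tilde A$ so that $(2\tilde A-\tilde C_0\theta)\sum_i\tilde F^{ii}\leq-\tfrac12\tilde C_0\theta\Lambda_1$ dominates the constant $C$, and then takes $|Df|$ small (of the order $\theta\Lambda_1/\max_{\bar{\tilde\Omega}}|Dh|$, just as in the proof of~(\ref{eqq2.9.1.1})) to absorb the last term; this gives $\tilde L\Psi\leq0$.

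Finally, the maximum principle yields $\Psi\geq0$ in $\tilde\Omega_r$, and $\Psi(\tilde x_0)=0$; since $\tilde\beta$ points into $\tilde\Omega$ (because $\langle\tilde\beta,\tilde\nu\rangle=\langle\beta,\nu\rangle>0$ by Lemma~\ref{l2.3}), it follows that $\Psi_{\tilde\beta}(\tilde x_0)\geq0$. The quadratic term contributes nothing at $\tilde x_0$, so this reads $\partial_{\tilde\beta}\big(\tilde h(D\tilde u)\big)\leq\tilde C_0\,\partial_{\tilde\beta}h=\tilde C_0\langle\tilde\beta,\tilde\nu\rangle\leq C$, and since $\partial_{\tilde\beta}\big(\tilde h(D\tilde u)\big)=\tilde h_{p_k}(D\tilde u)\tilde u_{kl}\tilde\beta^l=\tilde\beta^k\tilde u_{kl}\tilde\beta^l=\tilde u_{\tilde\beta\tilde\beta}$, we get $\tilde u_{\tilde\beta\tilde\beta}(\tilde x_0)\leq C$, which by the choice of $\tilde x_0$ is the desired bound~(\ref{eq3.5a}). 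I expect the main obstacle to be exactly the sign bookkeeping for $\tilde L\Psi$ in the presence of the drift term $f_{p_i}\partial_i$ — this is the only place where the smallness of $|Df|$ is genuinely used — while everything else is a faithful copy of the proof of Lemma~\ref{lem3.2}.
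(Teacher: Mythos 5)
Your proof is correct and is essentially the same barrier argument the paper uses: same point $\tilde x_0$, same barrier $-\tilde h(D\tilde u)+\tilde C_0 h+\tilde A|\cdot-\tilde x_0|^2$, same use of $\tilde L\tilde u_l=0$, the concavity of $h$, and the smallness of $|Df|$ to obtain $\tilde L\Psi\le 0$ and then the maximum principle. The only difference is that you spell out the estimate on $\tilde L\Psi$ in full, whereas the paper delegates this to the analogous computation in the proof of $(\ref{eqq2.13a})$ inside Lemma~\ref{l2.3}.
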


\begin{proof}
Let $\tilde{x}_0\in\partial\tilde{\Omega}$ satisfy $\tilde{u}_{\tilde{\beta}\tilde{\beta}}(\tilde{x}_0)=\max_{\partial\Omega}\tilde{u}_{\tilde{\beta}\tilde{\beta}}$.
To estimate the upper bound of $\tilde{u}_{\tilde{\beta}\tilde{\beta}}$,
we consider the barrier function
$$\tilde\Psi:=-\tilde{h}(D\tilde{u})+C_0 h+A|y-\tilde{x}_0|^2.$$
For any $y\in \partial\tilde{\Omega}$, $D\tilde{u}(y)\in \partial\Omega$, then $\tilde{h}(D\tilde{u})=0$. It is clear that $h=0$ on $\partial\tilde{\Omega}$. As the proof of (\ref{eqq2.13a})  in terms of $|D f|$ being sufficiently small , we can find the constants $C_0$ and $A$ such that
\begin{equation}\label{eq3.6a}
\begin{cases}
   \tilde{L}\tilde\Psi\leq 0,\quad y\in\tilde{\Omega}_{r},\\
   \tilde\Psi\geq 0 ,\quad\  y\in\partial\tilde{\Omega}_{r}.
\end{cases}
\end{equation}
By the maximum principle, we get
$$\tilde\Psi(y)\geq 0,\quad\quad y\in \tilde{\Omega}_r.$$
Combining it with $\tilde\Psi(\tilde{x}_0)=0$ we obtain $\tilde\Psi_{\tilde{\beta}}(\tilde{x}_0)\geq 0$, which implies
$$\frac{\partial \tilde{h}}{\partial \tilde{\beta}}(D\tilde{u}(\tilde{x}_0))\leq C_0.$$

On the other hand, we see that at $\tilde{x}_0$,
$$\frac{\partial \tilde{h}}{\partial \tilde{\beta}}=\langle D\tilde{h}(D\tilde{u}),\tilde{\beta}\rangle=\frac{\partial \tilde{h}}{\partial p_k}\tilde{u}_{kl}\tilde{\beta}^l=\tilde{\beta}^k\tilde{u}_{kl}\tilde{\beta}^l=\tilde{u}_{\tilde{\beta}\tilde{\beta}}.$$
Therefore,
$$\tilde{u}_{\tilde{\beta}\tilde{\beta}}=\frac{\partial \tilde{h}}{\partial \tilde{\beta}}\leq C.$$
\end{proof}

Next, we estimate the double tangential derivative of $\tilde{u}$.
\begin{lemma}\label{lem3.3a}
Let $F$ satisfy the structure conditions (\ref{e1.2.0})-(\ref{e1.4}) and $f\in {\mathscr{A}}_\delta$. If $\tilde{u}$ is a smooth uniformly convex solution of (\ref{eqq2.9}) and $|D f|$ is sufficiently small, then there exists a positive constant $C$ depending only on $\Omega$, $\tilde{\Omega}$, $\Lambda_1$, $\Lambda_2$ and $\delta$, such that
\begin{equation}\label{eq3.7a}
   \max_{\partial\tilde{\Omega}}\tilde{u}_{\tilde\varsigma\tilde\varsigma} \leq C.
\end{equation}
\end{lemma}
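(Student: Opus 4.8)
The plan is to repeat the proof of Lemma~\ref{lem3.3} verbatim for the dual solution $\tilde u$ of \eqref{eqq2.9}, replacing $u$, $L$, $\Omega$ and the defining function $\tilde h$ throughout by $\tilde u$, $\tilde L:=\tilde F^{ij}\partial_{ij}+f_{p_i}\partial_i$, $\tilde\Omega$ and the defining function $h$ of $\tilde\Omega$. All of the structure conditions \eqref{e1.2.0}--\eqref{e1.4} carry over to $\tilde F$: concavity of $\tilde F$ is precisely \eqref{e1.2.2}, and \eqref{e1.3}--\eqref{e1.4} for $\tilde F$ follow from the Legendre identities $\partial_{\mu_i}\tilde F=\lambda_i^2\,\partial_{\lambda_i}F$, $\mu_i^2\,\partial_{\mu_i}\tilde F=\partial_{\lambda_i}F$ together with Remark~\ref{r2.2}.

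First I would pick $\tilde x_0\in\partial\tilde\Omega$ with $M:=\tilde u_{\tilde\varsigma\tilde\varsigma}(\tilde x_0)=\max_{\partial\tilde\Omega}\tilde u_{\tilde\varsigma\tilde\varsigma}$, rotate so that the inward unit normal at $\tilde x_0$ is $\tilde e_n$ and $M=\tilde u_{11}(\tilde x_0)$, and assume $M\ge 1$. Using the tangential decomposition \eqref{eq3.3a}--\eqref{eq3.4a} of $\tilde u_{\tilde\xi\tilde\xi}$, the strict obliqueness estimate \eqref{e2.11} (recall $\langle\tilde\beta,\tilde\nu\rangle=\langle\beta,\nu\rangle$) and the bound $\max_{\partial\tilde\Omega}\tilde u_{\tilde\beta\tilde\beta}\le C$ of Lemma~\ref{lem3.2a}, one obtains, exactly as in \eqref{eq3.8}--\eqref{eq3.10},
$$\frac{\tilde u_{11}}{M}+2\langle\tilde\nu,\tilde e_1\rangle\frac{\langle\tilde\beta^T,\tilde e_1\rangle}{\langle\tilde\beta,\tilde\nu\rangle}\le 1+C\langle\tilde\nu,\tilde e_1\rangle^2\qquad\text{on }\partial\tilde\Omega,$$
after extending $\tilde\nu$ smoothly to a tubular neighbourhood of $\partial\tilde\Omega$. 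Then I set
$$\tilde w:=\tilde A|y-\tilde x_0|^2-\frac{\tilde u_{11}}{M}-2\langle\tilde\nu,\tilde e_1\rangle\frac{\langle\tilde\beta^T,\tilde e_1\rangle}{\langle\tilde\beta,\tilde\nu\rangle}+C\langle\tilde\nu,\tilde e_1\rangle^2+1,$$
so that $\tilde w\ge0$ on $\partial\tilde\Omega$ and $\tilde w(\tilde x_0)=0$; since Lemma~\ref{lem3.1-1} and Lemma~\ref{lem3.2a} give $\max_{\partial\tilde\Omega}|D^2\tilde u|\le CM$, the constant $\tilde A$ can be chosen so large that $\tilde w\ge0$ on $\tilde\Omega\cap\partial B_r(\tilde x_0)$ as well.

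Next I would compute $\tilde L\tilde w$. Differentiating \eqref{eqq2.9} once in $e_l$ gives $\tilde L\tilde u_l=0$; differentiating it twice in $e_1$, the Hessian term $-f_{p_kp_l}\tilde u_{k1}\tilde u_{l1}$ is nonnegative by concavity of $f$ while the gradient term is absorbed into $\tilde L$, so $\tilde L\tilde u_{11}\ge0$ by the concavity \eqref{e1.2.2} of $\tilde F$, whence $\tilde L(-\tilde u_{11}/M)\le0$; the $\tilde L$ of the remaining smooth bounded terms is $\le C\sum_i\tilde F^{ii}$ once $|Df|$ is small (using $\sum_i\tilde F^{ii}\ge\Lambda_1$ from \eqref{e1.3} to absorb the first-order contributions), so $\tilde L\tilde w\le C\sum_i\tilde F^{ii}$ in $\tilde\Omega_r:=\tilde\Omega\cap B_r(\tilde x_0)$. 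I then add the barrier $\tilde\Upsilon:=\tilde w+\tilde C_0 h$: since $D^2h\le-\theta I$ one has $\tilde Lh\le-\theta\sum_i\tilde F^{ii}+|Df|\max_{\bar{\tilde\Omega}}|Dh|\le-\tfrac{\theta}{2}\sum_i\tilde F^{ii}$ when $|Df|$ is small, so taking $\tilde C_0\gg\tilde A$ yields $\tilde L\tilde\Upsilon\le0$ in $\tilde\Omega_r$ and $\tilde\Upsilon\ge0$ on $\partial\tilde\Omega_r$. The maximum principle together with $\tilde\Upsilon(\tilde x_0)=0$ forces $\partial_{\tilde\beta}\tilde\Upsilon(\tilde x_0)\ge0$, whose leading term is $-\tfrac1M\tilde u_{11\tilde\beta}(\tilde x_0)$, so $\tilde u_{11\tilde\beta}(\tilde x_0)\le CM$. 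Finally, differentiating the boundary condition $\tilde h(D\tilde u)=0$ twice in the tangential direction $\tilde e_1$ gives $\tilde h_{p_k}\tilde u_{k11}+\tilde h_{p_kp_l}\tilde u_{k1}\tilde u_{l1}=0$, and $D^2\tilde h\le-\tilde\theta I$ yields $\tilde u_{11\tilde\beta}=\tilde h_{p_k}\tilde u_{k11}\ge\tilde\theta M^2$; combining, $\tilde\theta M^2\le CM$, hence $M\le C$, which is \eqref{eq3.7a}.

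The main obstacle, and the only genuine difference from Lemma~\ref{lem3.3}, is the extra first-order term $f_{p_i}\partial_i$ in $\tilde L$ arising from the gradient dependence of the right-hand side $-f(D\tilde u)$ of \eqref{eqq2.9}: one must check that this term is exactly what absorbs the derivative of $f(D\tilde u)$ so that $\tilde L\tilde u_{11}\ge0$ survives (here concavity of $f$ enters), and one must keep track of $f_{p_i}\partial_i(\cdot)$ in every barrier computation, which succeeds only because $|Df|$ is assumed sufficiently small and the uniform lower bound $\sum_i\tilde F^{ii}\ge\Lambda_1$ is available. A secondary but necessary point is to verify that $\tilde F$ inherits all of \eqref{e1.2.0}--\eqref{e1.4}, which is where \eqref{e1.2.2} and Remark~\ref{r2.2} are used.
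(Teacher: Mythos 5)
Your proposal is correct and follows essentially the same route as the paper's own proof: it is the direct dualization of Lemma~\ref{lem3.3} with the same barrier function $\tilde w$, the same auxiliary $\tilde\Upsilon=\tilde w+\tilde C_0 h$, and the same final step differentiating $\tilde h(D\tilde u)=0$ twice tangentially. You also correctly identify the only genuine delicacy — the first-order term $f_{p_i}\partial_i$ in $\tilde L$ and the use of concavity of $f$ together with smallness of $|Df|$ — which the paper treats by pointing back to the analogous computations in Lemmas~\ref{l2.3} and~\ref{lem3.2a}.
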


\begin{proof}
Assume that $ \tilde{u}_{\tilde\varsigma\tilde\varsigma}|_{\partial\tilde{\Omega}}$ attains its maximum at $\tilde{x}_0\in\partial \tilde{\Omega}$. Let
$$\tilde M:=\tilde{u}_{11}(\tilde{x}_0)=\max_{\partial\tilde{\Omega}}\tilde{u}_{\tilde\varsigma\tilde\varsigma},$$
and $e_n$ be the unit inward normal vector of $\partial\tilde{\Omega}$ at $\tilde{x}_0$.

For any $y\in \partial\tilde{\Omega}$,  we have by (\ref{eq3.3a}),
\begin{equation}\label{eq3.8a}
\begin{aligned}
\tilde u_{\tilde\xi\tilde\xi}&=|\tilde\varsigma(\tilde\xi)|^2\tilde u_{\tilde\varsigma\tilde\varsigma}+ \frac{\langle \tilde\nu,\tilde\xi\rangle^2}{\langle \tilde\beta,\tilde\nu\rangle^2}\tilde u_{\tilde\beta\tilde\beta}\\
          &\leq \left(1+C\langle \tilde\nu,\tilde\xi\rangle^2-2\langle \tilde\nu,\tilde\xi\rangle \frac{\langle \tilde\beta^T,\tilde\xi\rangle}{\langle \tilde\beta,\tilde\nu\rangle}\right)\tilde M
                 + \frac{\langle \tilde\nu,\tilde\xi\rangle^2}{\langle \tilde\beta,\tilde\nu\rangle^2}\tilde u_{\tilde\beta\tilde\beta}.
\end{aligned}
\end{equation}
Without loss of generality, we assume that $\tilde M\geq 1$. Then by (\ref{e2.11}) and (\ref{eq3.5a}) we have
\begin{equation}\label{eq3.9a}
  \frac{\tilde u_{\tilde\xi\tilde\xi}}{\tilde M}+2\langle \tilde\nu,\tilde\xi\rangle \frac{\langle \tilde\beta^T,\tilde\xi\rangle}{\langle \tilde\beta,\tilde\nu\rangle}
             \leq 1+C\langle \tilde\nu,\tilde\xi\rangle^2.
\end{equation}
Let $\tilde\xi=e_1$, then
\begin{equation}\label{eq3.10a}
  \frac{\tilde u_{11}}{\tilde M}+2\langle \tilde\nu,e_1\rangle \frac{\langle \tilde\beta^T,e_1\rangle}{\langle \tilde\beta,\tilde\nu\rangle}
             \leq 1+C\langle \tilde\nu,e_1\rangle^2.
\end{equation}
We see that the function
\begin{equation}\label{eq3.11a}
\tilde w:=A|y-\tilde x_0|^2-\frac{\tilde u_{11}}{\tilde M}-2\langle \tilde\nu,e_1\rangle \frac{\langle \tilde\beta^T,e_1\rangle}{\langle \tilde\beta,\tilde\nu\rangle}+C\langle \tilde\nu,e_1\rangle^2+1
\end{equation}
satisfies
$$\tilde w|_{\partial\tilde\Omega}\geq 0,\quad  \tilde w(\tilde x_0)=0.$$
Then, by (\ref{eq3.1a}) we can choose the constant $A$ large enough such that
$$\tilde w|_{\tilde\Omega \cap \partial B_{r}(\tilde x_0)} \geq 0.$$

By (\ref{eqq2.9}), $f\in {\mathscr{A}}_\delta$ and $|D f|$ is sufficiently small, we can show that
$$\tilde L\tilde w\leq  C\sum^{n}_{i=1} \tilde F^{ii}.$$
Since the proof of Lemma \ref{lem3.2a}, let us define
$$\tilde\Upsilon:=\tilde w+C_0 {h}.$$
A standard barrier argument makes conclusion of
$$\tilde\Upsilon_{\tilde\beta}(\tilde x_0)\geq0.$$
Therefore,
\begin{equation}\label{eq3.12a}
 \tilde u_{11\tilde\beta}(\tilde x_0)\leq C \tilde M.
\end{equation}
On the other hand, differentiating $\tilde h(D\tilde u)$ twice in the direction $e_1$ at $\tilde x_0$, we have
$$\tilde h_{p_k}\tilde u_{k11}+\tilde h_{p_kp_l}\tilde u_{k1}\tilde u_{l1}=0.$$
The concavity of $\tilde h$ yields that
$$\tilde h_{p_k}\tilde u_{k11}=-\tilde h_{p_kp_l}\tilde u_{k1}\tilde u_{l1}\geq \tilde\theta \tilde M^2.$$
Combining it with $\tilde h_{p_k}\tilde u_{k11}=\tilde u_{11\tilde\beta}$, and using (\ref{eq3.12a}) we obtain
$$\tilde\theta \tilde M^2\leq C\tilde M.$$
Then we get the upper bound of $\tilde M=\tilde u_{11}(\tilde x_0)$ and thus the desired result follows.
\end{proof}

By Lemma \ref{lem3.2a}, Lemma \ref{lem3.3a} and (\ref{eq3.4a}), we obtain the $C^2$ a-priori estimate of $\tilde{u}$ on the boundary.
\begin{lemma}\label{lem3.4a}
Let $F$ satisfy the structure conditions (\ref{e1.2.0})-(\ref{e1.4}) and $f\in {\mathscr{A}}_\delta$. If $\tilde u$ is a smooth uniformly convex solution of (\ref{eqq2.9}) and $|D f|$ is sufficiently small, then there exists a positive constant $C$ depending only on $\Omega$, $\tilde{\Omega}$, $\Lambda_1$, $\Lambda_2$ and $\delta$, such that
\begin{equation}\label{eq3.13a}
\max_{\partial\tilde\Omega}|D^2\tilde u| \leq C.
\end{equation}
\end{lemma}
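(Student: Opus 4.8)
The plan is to combine the two boundary estimates just established with the pointwise decomposition (\ref{eq3.4a}), in exact analogy with the passage from Lemma \ref{lem3.2} and Lemma \ref{lem3.3} to Lemma \ref{lem3.4} for $u$ itself. Fix $\tilde x\in\partial\tilde\Omega$ and an arbitrary unit vector $\tilde\xi$. The strict obliqueness estimate (\ref{e2.11}), together with the relation $\langle\tilde\beta,\tilde\nu\rangle=\langle\beta,\nu\rangle\geq 1/C_1$, guarantees that $\tilde\beta$ is uniformly transversal to $\partial\tilde\Omega$, so one may split $\tilde\xi=\tilde\varsigma(\tilde\xi)+\frac{\langle\tilde\nu,\tilde\xi\rangle}{\langle\tilde\beta,\tilde\nu\rangle}\tilde\beta$ with the tangential part controlled as in (\ref{eq3.3a}); expanding the second derivative of $\tilde u$ in this frame and using $\tilde u_{\tilde\beta\tilde\varsigma}=0$ from (\ref{eq3.2a}) produces $\tilde u_{\tilde\xi\tilde\xi}\leq C(\tilde u_{\tilde\varsigma\tilde\varsigma}+\tilde u_{\tilde\beta\tilde\beta})$, that is, (\ref{eq3.4a}), with $C$ depending only on $\Omega$, $\tilde\Omega$, $\Lambda_1$, $\Lambda_2$, $\delta$ and the obliqueness constant $C_1$.

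Next I would feed in the two one-sided bounds. Lemma \ref{lem3.2a} gives $\max_{\partial\tilde\Omega}\tilde u_{\tilde\beta\tilde\beta}\leq C$ by means of the barrier $-\tilde h(D\tilde u)+C_0h+A|y-\tilde x_0|^2$ and the identity $\partial\tilde h/\partial\tilde\beta=\tilde u_{\tilde\beta\tilde\beta}$ at the maximum point; Lemma \ref{lem3.3a} gives $\max_{\partial\tilde\Omega}\tilde u_{\tilde\varsigma\tilde\varsigma}\leq C$ through the auxiliary function $\tilde w$ of (\ref{eq3.11a}), a barrier argument yielding $\tilde u_{11\tilde\beta}(\tilde x_0)\leq C\tilde M$, and the strict concavity of the defining function $\tilde h$ of $\Omega$ forcing $\tilde\theta\tilde M^2\leq C\tilde M$, hence $\tilde M\leq C$. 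Both inputs already require $|Df|$ to be sufficiently small, which is exactly where one checks $\tilde L\tilde\Psi\leq 0$ and $\tilde L\tilde w\leq C\sum_{i=1}^n\tilde F^{ii}$ for the dual operator $\tilde L=\tilde F^{ij}\partial_{ij}+f_{p_i}\partial_i$. Substituting the two bounds into (\ref{eq3.4a}) and taking the supremum over unit $\tilde\xi$, together with the elementary fact that $|D^2\tilde u|\leq C\sup_{|\tilde\xi|=1}\tilde u_{\tilde\xi\tilde\xi}$ for a uniformly convex function, yields $\max_{\partial\tilde\Omega}|D^2\tilde u|\leq C$ with the stated dependence, which is (\ref{eq3.13a}).

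I do not expect a genuine obstacle here, since all the substantive work is carried by Lemmas \ref{lem3.2a} and \ref{lem3.3a}; the only points deserving care are bookkeeping. First, one should confirm that letting $C$ in (\ref{eq3.4a}) depend on the obliqueness constant $C_1$ is harmless, because $C_1$ is itself universal by Lemma \ref{l2.3}. Second, one should recall that the dual operator $\tilde L$ inherits the full set of structure conditions (\ref{e1.2.0})--(\ref{e1.4}) — this rests on the extra hypothesis (\ref{e1.2.2}) that $\tilde F$ is concave, which by the remark following it does not follow from (\ref{e1.2}) and (\ref{e1.2.1}) — so that the barrier constructions of the two preceding lemmas transfer verbatim to $\tilde u$, as was recorded in Remark \ref{r2.2}.
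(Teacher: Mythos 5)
Your proposal is correct and takes essentially the same route as the paper, which proves Lemma \ref{lem3.4a} simply by citing Lemma \ref{lem3.2a}, Lemma \ref{lem3.3a}, and the decomposition (\ref{eq3.4a}). Your additional bookkeeping remarks (that the obliqueness constant $C_1$ is universal by Lemma \ref{l2.3}, that for a uniformly convex $\tilde u$ the matrix norm of $D^2\tilde u$ is controlled by $\sup_{|\tilde\xi|=1}\tilde u_{\tilde\xi\tilde\xi}$, and that the concavity of $\tilde F$ is an independent structure hypothesis (\ref{e1.2.2}) rather than a consequence of (\ref{e1.2}) and (\ref{e1.2.1})) are all accurate and consistent with the paper.
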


By Lemma \ref{lem3.1-1} and Lemma \ref{lem3.4a}, one can see that
\begin{lemma}\label{lem3.5a}
Let $F$ satisfy the structure conditions (\ref{e1.2.0})-(\ref{e1.4}) and $f\in {\mathscr{A}}_\delta$. If $\tilde u$ is a smooth uniformly convex solution of (\ref{eqq2.9}) and $|D f|$ is sufficiently small, then there exists a positive constant $C$ depending only on $\Omega$, $\tilde{\Omega}$, $\Lambda_1$, $\Lambda_2$ and $\delta$, such that
\begin{equation}\label{eq3.14a}
\max_{\bar{\tilde\Omega}}|D^2\tilde u| \leq C.
\end{equation}
\end{lemma}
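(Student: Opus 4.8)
The plan is to obtain Lemma~\ref{lem3.5a} by simply concatenating two facts already established: the global-to-boundary reduction of Lemma~\ref{lem3.1-1} and the boundary $C^2$ bound of Lemma~\ref{lem3.4a}. Note first that the hypotheses of Lemma~\ref{lem3.5a} are exactly those of Lemma~\ref{lem3.4a}, and they also imply the hypothesis of Lemma~\ref{lem3.1-1}: since $f\in{\mathscr{A}}_\delta$, $f$ is concave on $\Omega$. Moreover, because $\tilde u$ is smooth on $\bar{\tilde\Omega}$, the function $|D^2\tilde u|$ is continuous on the compact set $\bar{\tilde\Omega}$, so $\max_{\bar{\tilde\Omega}}|D^2\tilde u|=\max\bigl\{\sup_{\tilde\Omega}|D^2\tilde u|,\ \max_{\partial\tilde\Omega}|D^2\tilde u|\bigr\}$, and it suffices to bound each of the two terms.

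For the interior term, I would use uniform convexity of $\tilde u$: at each point $D^2\tilde u$ is positive definite, so its operator norm is controlled by its trace, $|D^2\tilde u|\le\sum_{i=1}^n\tilde u_{ii}\le n\max_{1\le i\le n}\tilde u_{ii}$. Applying Lemma~\ref{lem3.1-1} with $\tilde\xi=e_i$ for each coordinate direction gives $\sup_{\tilde\Omega}\tilde u_{ii}\le\max_{\partial\tilde\Omega}\tilde u_{ii}\le\max_{\partial\tilde\Omega}|D^2\tilde u|$, whence $\sup_{\tilde\Omega}|D^2\tilde u|\le C\max_{\partial\tilde\Omega}|D^2\tilde u|$. (Equivalently, one may apply Lemma~\ref{lem3.1-1} directly in the unit direction realizing the operator norm of $D^2\tilde u$ at an interior maximum point.) Thus the global Hessian bound is reduced to the boundary.

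Finally, I would invoke Lemma~\ref{lem3.4a}, which — under the structure conditions (\ref{e1.2.0})--(\ref{e1.4}), $f\in{\mathscr{A}}_\delta$, and $|Df|$ sufficiently small — already gives $\max_{\partial\tilde\Omega}|D^2\tilde u|\le C$ with $C$ depending only on $\Omega$, $\tilde\Omega$, $\Lambda_1$, $\Lambda_2$ and $\delta$. Chaining this with the previous step yields $\max_{\bar{\tilde\Omega}}|D^2\tilde u|\le C$ with the asserted dependence, proving the lemma. There is no real obstacle here: all the substantive work (the barrier/maximum-principle argument for the interior reduction and the delicate tangential and double-oblique boundary estimates) is contained in Lemmas~\ref{lem3.1-1}--\ref{lem3.4a}; the only points worth spelling out are the passage from $\tilde\Omega$ to $\bar{\tilde\Omega}$ by continuity and the use of uniform convexity to pass from directional second derivatives to the full Hessian norm.
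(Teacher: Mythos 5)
Your proposal is correct and matches the paper's own (one-line) derivation: the paper obtains Lemma~\ref{lem3.5a} exactly by combining the interior reduction of Lemma~\ref{lem3.1-1} with the boundary estimate of Lemma~\ref{lem3.4a}. The extra details you supply (concavity from $f\in{\mathscr{A}}_\delta$, compactness, and the passage from directional to full Hessian bounds) are harmless elaborations of what Lemma~\ref{lem3.1-1} already asserts.
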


By Lemma \ref{lem3.5} and Lemma \ref{lem3.5a}, we conclude that
\begin{lemma}\label{lem3.6}
Let $F$ satisfy the structure conditions (\ref{e1.2.0})-(\ref{e1.4}) and $f\in {\mathscr{A}}_\delta$. If $u$ is a smooth uniformly convex solution of (\ref{e1.8}) and $|D f|$ is sufficiently small, then there exists a positive constant $C$ depending only on $\Omega$, $\tilde{\Omega}$, $\Lambda_1$, $\Lambda_2$ and $\delta$, such that
\begin{equation}\label{eq3.15}
\frac{1}{C}I_n\leq D^2 u(x) \leq C I_n,\ \ x\in\bar\Omega,
\end{equation}
where $I_n$ is the $n\times n$ identity matrix.
\end{lemma}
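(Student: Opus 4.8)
The plan is to read the two-sided bound off the global $C^2$ estimates already proved for $u$ and for its Legendre transform $\tilde u$, using the duality $D^2\tilde u=(D^2u)^{-1}$. The upper bound is immediate: $u$ is uniformly convex, so $D^2u(x)>0$, and Lemma \ref{lem3.5} gives $\max_{\bar\Omega}|D^2u|\le C$ with $C$ depending only on $\Omega$, $\tilde\Omega$, $\Lambda_1$, $\Lambda_2$ and $\delta$. Hence every eigenvalue $\lambda_i(x)$ of $D^2u(x)$ satisfies $0<\lambda_i(x)\le C$, i.e. $D^2u(x)\le CI_n$ for all $x\in\bar\Omega$.

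For the lower bound I would pass to the dual solution. As recalled in the discussion around (\ref{e2.7}) and in Remark \ref{r2.2}, the Legendre transform $\tilde u$ of $u$ is a smooth uniformly convex solution of (\ref{eqq2.9}) on $\tilde\Omega$ with $D\tilde u(\tilde\Omega)=\Omega$, and at the corresponding point $\tilde x=Du(x)$ one has the pointwise identity $\left(\partial^2\tilde u/\partial\tilde x_i\partial\tilde x_j\right)=\left(\partial^2u/\partial x_i\partial x_j\right)^{-1}$, so the eigenvalues are related by $\mu_i=\lambda_i^{-1}$. Since $|Df|$ is sufficiently small and $f\in{\mathscr A}_\delta$, Lemma \ref{lem3.5a} applies to $\tilde u$ and yields $\max_{\bar{\tilde\Omega}}|D^2\tilde u|\le C$, with $C$ again depending only on $\Omega$, $\tilde\Omega$, $\Lambda_1$, $\Lambda_2$ and $\delta$; in particular $D^2\tilde u(\tilde x)\le CI_n$, i.e. $\mu_i\le C$, for every $\tilde x\in\bar{\tilde\Omega}$.

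Finally I would transfer this back to $u$. Because $Du$ extends to a diffeomorphism $\bar\Omega\to\bar{\tilde\Omega}$, every $x\in\bar\Omega$ corresponds to $\tilde x=Du(x)\in\bar{\tilde\Omega}$, and the identity above gives $\left(D^2u(x)\right)^{-1}=D^2\tilde u(\tilde x)\le CI_n$, hence $D^2u(x)\ge\tfrac1C I_n$, equivalently $\lambda_i(x)=\mu_i^{-1}\ge 1/C$. Enlarging $C$ so that both one-sided inequalities hold simultaneously gives $\tfrac1C I_n\le D^2u(x)\le CI_n$ on $\bar\Omega$, as claimed. There is no genuine analytic obstacle remaining here; the only point requiring a little care is to confirm that the constants furnished by Lemmas \ref{lem3.5} and \ref{lem3.5a} depend solely on the listed data (which they do, since both are assembled from $\Omega$, $\tilde\Omega$, $\Lambda_1$, $\Lambda_2$ and $\delta$, and $n$ and $\operatorname{diam}(\Omega)$ are determined by $\Omega$), so that the final $C$ is again universal. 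In effect this lemma is the bookkeeping step that records that the equation (\ref{e1.8}) is uniformly elliptic along the solution, packaging the boundary and interior second-derivative estimates for $u$ and $\tilde u$ into a single statement to be used in the continuity method of Section 5.
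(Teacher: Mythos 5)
Your proposal is correct and follows exactly the paper's (implicit) argument: the upper bound $D^2u\le CI_n$ comes from Lemma \ref{lem3.5}, and the lower bound from Lemma \ref{lem3.5a} applied to the Legendre transform $\tilde u$ together with the identity $D^2\tilde u=(D^2u)^{-1}$. The paper states the lemma simply as a consequence of Lemmas \ref{lem3.5} and \ref{lem3.5a} without spelling out the details, which you have supplied faithfully.
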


\section{Proof of Theorem \ref{t1.2}}

In this section, we will prove Theorem \ref{t1.2}. Two lemmas will be needed when we start the proof. Thanks to the discussion on the uniqueness in the fifth section of \cite{SM}, we deduce that
\begin{lemma}\label{yinli2.4.1}
If $(\ref{e1.2})$ holds and $u\in C^{\infty}(\bar{\Omega})$ be uniformly convex solutions of (\ref{e1.8}), then $u$ is unique up to a constant.
\end{lemma}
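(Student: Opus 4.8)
The plan is to argue by contradiction in the spirit of the uniqueness discussion in \cite{SM}. Suppose $u_1, u_2 \in C^{\infty}(\bar{\Omega})$ are two uniformly convex solutions of (\ref{e1.8}), corresponding to constants $c_1$ and $c_2$ respectively; we must show $u_1 - u_2$ is constant (which in particular forces $c_1 = c_2$). The key structural fact is that both gradient maps $Du_1$ and $Du_2$ send $\Omega$ diffeomorphically onto the \emph{same} target domain $\tilde{\Omega}$. First I would consider the function $w := u_1 - u_2$ on $\bar{\Omega}$ and examine its behaviour at an interior maximum (and minimum) point. At a point $x_0$ where $w$ attains its maximum, $Dw(x_0) = 0$, so $Du_1(x_0) = Du_2(x_0) =: p_0 \in \tilde{\Omega}$.

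The heart of the matter is the following comparison: writing the equation $F[D^2 u] = f(x) + c$ and using that $F$ is monotone increasing in each eigenvalue (condition (\ref{e1.2})), together with concavity of $F$ on $\Gamma^+_n$ (condition (\ref{e1.2.1})), one obtains that $F$ is elliptic along the segment joining $D^2 u_1$ and $D^2 u_2$. More precisely, by the mean value theorem there is a positive definite matrix-valued coefficient $a^{ij}(x) = \int_0^1 F^{ij}\big(t D^2 u_1 + (1-t) D^2 u_2\big)\, dt$ such that
\begin{equation*}
a^{ij}(x)\, w_{ij}(x) = (c_1 - c_2), \qquad x \in \Omega.
\end{equation*}
Thus $w$ satisfies a linear homogeneous-up-to-a-constant elliptic equation with no zeroth order term. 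If $c_1 = c_2$, the strong maximum principle applied to $w$ immediately gives that $w$ attains no interior extremum unless it is constant; combined with the boundary analysis below this yields $w \equiv \text{const}$. If $c_1 \neq c_2$, say $c_1 > c_2$, then $a^{ij} w_{ij} > 0$, so $w$ can attain no interior maximum, forcing $\max_{\bar{\Omega}} w$ to be attained on $\partial\Omega$; symmetrically, if $c_1 < c_2$ then $\min_{\bar\Omega} w$ is attained on $\partial\Omega$.

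The remaining — and genuinely delicate — step is the boundary argument that rules out $c_1 \neq c_2$ and upgrades the interior analysis to a global conclusion. Here one exploits the second boundary condition: since $Du_1(\partial\Omega) = Du_2(\partial\Omega) = \partial\tilde{\Omega}$, at a boundary point $x^* \in \partial\Omega$ where $w$ attains, say, its maximum, the inward normal derivative satisfies $\partial_\nu w(x^*) \le 0$, while $Du_1(x^*)$ and $Du_2(x^*)$ both lie on $\partial\tilde{\Omega}$. A Hopf-lemma-type argument (using the strict obliqueness estimate, Lemma~\ref{l2.3}, to control the behaviour of the gradient maps near the boundary, exactly as in the fifth section of \cite{SM}) shows that unless $w$ is constant one gets $\partial_\nu w(x^*) < 0$ strictly, while the geometry of the two gradient maps hitting the convex boundary $\partial\tilde\Omega$ forces a contradiction with $c_1 \neq c_2$. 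I expect this boundary/obliqueness interplay to be the main obstacle, since it is where the convexity of $\tilde\Omega$ and the defining-function machinery of Section~3 must be invoked; the interior part is a routine linearization. Once $c_1 = c_2$ is established, the homogeneous equation $a^{ij} w_{ij} = 0$ together with the strong maximum principle and the Hopf lemma force $w$ to be constant on $\bar\Omega$, completing the proof.
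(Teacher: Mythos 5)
Your outline reproduces, in more detail, exactly what the paper does: the paper's entire proof is the single citation to Section~5 of \cite{SM}, and your interior part (linearize to $a^{ij}w_{ij}=c_1-c_2$ with $a^{ij}$ the averaged $F^{ij}$, then use the maximum principle to control the sign of $c_1-c_2$) is precisely the Brendle--Warren mechanism. The one place you wave your hands --- ``the geometry of the two gradient maps hitting $\partial\tilde\Omega$ forces a contradiction'' --- is where the single missing idea lives, and it deserves to be named: the second boundary condition $h(Du_1)=h(Du_2)=0$ on $\partial\Omega$ must itself be linearized. By the mean value theorem it gives $\bar\beta^i w_i=0$ on $\partial\Omega$ with $\bar\beta^i=\int_0^1 h_{p_i}\big(sDu_1+(1-s)Du_2\big)\,ds$; since $sDu_1+(1-s)Du_2=D\big(su_1+(1-s)u_2\big)$ is the gradient of a uniformly convex function, Lemma~\ref{l1.3} gives $\bar\beta\cdot\nu\ge 0$, and at $s=0,1$ the obliqueness of $u_1,u_2$ (via (\ref{e2.11.1}) and uniform convexity) gives strict positivity, so $\bar\beta\cdot\nu>0$. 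At a boundary maximum $x^*$ of $w$ the tangential part of $Dw$ vanishes, so $Dw(x^*)=-t\nu$ with $t\ge 0$, and $\bar\beta^i w_i=0$ forces $t\,\bar\beta\cdot\nu=0$, hence $t=0$; combined with Hopf's lemma this makes $w$ constant and $c_1=c_2$. (Equivalently, concavity of $h$ on the chord $[Du_2(x^*),Du_1(x^*)]\subset\bar{\tilde\Omega}$ gives $h_{p_i}(Du_2(x^*))w_i(x^*)\ge 0$, and strict obliqueness of $u_2$ alone already closes the argument.) With that step made explicit, your proof is complete and is the same argument the paper imports from \cite{SM}; note also that here the qualitative obliqueness you need follows from uniform convexity and (\ref{e2.11.1}) alone, so the hypothesis ``(\ref{e1.2}) holds'' in the lemma is indeed sufficient without invoking the quantitative Lemma~\ref{l2.3}.
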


Next, similar to Corollary 1.2 in \cite{HRY}, we obtain
\begin{lemma}\label{yinli2.4.2}
If $(\ref{e1.2})$ holds and $u\in C^{2}(\bar{\Omega})$ is a uniformly convex solution of (\ref{e1.8}), then $u\in C^{\infty}(\bar{\Omega})$.
\end{lemma}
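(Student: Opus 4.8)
The plan is to bootstrap from $C^2$ regularity to $C^\infty$ using the standard elliptic regularity theory for oblique derivative problems, following the same scheme as Corollary 1.2 in \cite{HRY}. The starting point is that $u\in C^2(\bar\Omega)$ is uniformly convex, so $\lambda(D^2u)\in\Gamma^+_n$ with eigenvalues bounded above and below on $\bar\Omega$; by Lemma \ref{l2.1} the eigenvalues in fact lie in a fixed compact subset $\Gamma^+_{]\mu,\omega[}$ of $\Gamma^+_n$. On this set $F$ is smooth and, by \eqref{e1.2}, $F^{ij}=\partial F/\partial a_{ij}$ is positive definite with eigenvalues controlled from above and below; hence the equation $F[D^2u]=f(x)+c$ is uniformly elliptic with $C^\infty$ coefficients (as a function of $(x,D^2u)$) along the solution, and the boundary condition $h(Du)=0$ is strictly oblique by Lemma \ref{l2.3} (note $|Df|$ small is already assumed throughout, and in any case the obliqueness $\langle\beta,\nu\rangle>0$ follows from Lemma \ref{l1.3} given $D^2u\geq 0$).

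Next I would upgrade $C^2$ to $C^{2,\alpha}$. Interior $C^{2,\alpha}$ estimates follow from the Evans--Krylov theorem, using that $F$ is concave in $D^2u$ on $\Gamma^+_n$ by \eqref{e1.2.1}. For the boundary, one applies the Evans--Krylov--type estimate up to the boundary for concave fully nonlinear equations with oblique boundary conditions (e.g.\ Lieberman--Trudinger), which is exactly the tool invoked in \cite{HRY}; this yields $u\in C^{2,\alpha}(\bar\Omega)$ for some $\alpha\in(0,1)$. I expect this boundary $C^{2,\alpha}$ step to be the main obstacle, in the sense that it is the one genuinely nontrivial input — the concavity hypothesis \eqref{e1.2.1} and the strict obliqueness from Section 3 are precisely what make it applicable, so it is really a matter of citing the right theorem rather than proving something new.

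Finally, once $u\in C^{2,\alpha}(\bar\Omega)$, a standard bootstrap finishes the argument. Differentiating the equation in a direction $e_k$ gives $F^{ij}(D^2u)\,\partial_{ij}u_k=f_k(x)$, a linear uniformly elliptic equation for $w:=u_k$ whose coefficients $F^{ij}(D^2u)$ are now $C^\alpha$ and whose right-hand side $f_k$ is $C^\infty$; differentiating the boundary relation $h(Du)=0$ tangentially and in the oblique direction gives a linear oblique boundary condition for $w$ with $C^{1,\alpha}$ coefficients. Schauder theory for oblique derivative problems then gives $w\in C^{2,\alpha}$, i.e.\ $u\in C^{3,\alpha}(\bar\Omega)$; since $f\in C^\infty(\bar\Omega)$ and the domain boundaries (hence $h,\tilde h$) are smooth, iterating this argument yields $u\in C^{k,\alpha}(\bar\Omega)$ for every $k$, and therefore $u\in C^\infty(\bar\Omega)$.
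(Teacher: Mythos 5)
Your overall strategy coincides with the paper's: interior $C^{2,\alpha}$ from Evans--Krylov (using concavity \eqref{e1.2.1}), boundary $C^{2,\alpha}$ from Lieberman--Trudinger (Theorem~1.1 of \cite{LT}, which is exactly what the paper cites), and then a Schauder bootstrap. So the route is the same; however, there is a real gap in your bootstrap step as written.

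When you differentiate $h(Du)=0$, you may only differentiate it \emph{tangentially} along $\partial\Omega$, since the relation holds only on the boundary. Doing so for a tangential direction $\varsigma$ gives $h_{p_i}(Du)\,\partial_i(u_\varsigma)=0$, an oblique boundary condition for $w=u_\varsigma$; combined with the differentiated PDE $F^{ij}(D^2u)\partial_{ij}w=f_\varsigma$ and Schauder theory for oblique problems, this yields $u_\varsigma\in C^{2,\alpha}(\bar\Omega)$ for tangential $\varsigma$. But this controls only those third derivatives $u_{ijk}$ for which at least one index is tangential. The pure normal third derivative $u_{nnn}$ (in boundary-flattening coordinates) is \emph{not} obtained this way, because you have no boundary-value problem for $u_n$: the normal derivative of the boundary relation is unavailable. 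Your phrase ``differentiating the boundary relation tangentially and in the oblique direction'' papers over precisely this point --- the oblique direction $\beta$ is not tangential, so one cannot differentiate $h(Du)=0$ along it.

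The paper closes this gap with an additional algebraic step: after flattening the boundary and establishing $C^{2,\alpha}$ regularity for the tangential derivatives, it differentiates the PDE in the normal direction and isolates the coefficient of $Y_{nn}$ (where $Y=\partial w/\partial z_n$). That coefficient is $a^{nn}+a^{kl}\varphi_k\varphi_l-a^{kn}\varphi_k-a^{ln}\varphi_l$, which is shown to be bounded below by $\Lambda_1/(3n)$ near $x_0$ by uniform ellipticity. Since the remaining terms involve only third derivatives already known to be $C^\alpha$, one solves for $Y_{nn}\in C^\alpha$, obtaining $w\in C^{3,\alpha}$ and hence $u\in C^{3,\alpha}(\bar\Omega)$; the iteration to $C^\infty$ then proceeds as you describe. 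Your sketch should incorporate this step explicitly, as it is the part of the argument that requires the ellipticity lower bound rather than just a citation of Schauder theory.

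One minor slip: you remark that $\langle\beta,\nu\rangle>0$ ``follows from Lemma~\ref{l1.3} given $D^2u\geq 0$,'' but Lemma~\ref{l1.3} only gives $\langle\beta,\nu\rangle\geq 0$. The strict lower bound needed for the oblique problem is Lemma~\ref{l2.3}, which you also cite, so this is not a gap in the argument, but the parenthetical justification is incorrect as stated.
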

\begin{proof}
By Evans-Krylov theorem, it's obvious that
$$u\in C^{\infty}(\Omega).$$
In the following, we describe the regularity of $u$ on the boundary.

Suppose that $x_0\in\partial \Omega$ and in the neighbourhood of $x_0$,
$$\partial \Omega=\{x_n=\varphi(x'): x'\in\Omega'\}$$
for some domain $\Omega'\subset \mathbb{R}^{n-1}$. Without loss of generality, we set $$x_0=(\theta, 0), \varphi(0)=0, D\varphi(0)=\theta, D^2\varphi(0)>0. $$

Let $m_0$ be small enough such that
$$\Omega_{m_0}:=\Omega\cap\{x_n<m_0\}=\{(x',x_n): x'\in \Omega'',\ \varphi(x')<x_n<m_0\}$$
for some domain $\Omega''\subset \mathbb{R}^{n-1}$. And we denote $\Omega''$ also by $\Omega'$.

We take the coordinate transformation in the following
$$T:\Omega_{m_0}\rightarrow T (\Omega_{m_0}),\ x=(x',x_n)\mapsto z=(z',z_n)=(x', x_n-\varphi(x')),$$
where $$T (\Omega_{m_0}):=\{(z',z_n): z'\in\Omega',\ 0<z_n<m_0-\varphi(z')\}.$$
We set $w(z):=u(T^{-1}(z))$.
Then $u(x)=w(x', x_n-\varphi(x'))$. Such that  for $1\leq i\leq n-1$, we see that $u_i=w_i-w_n\varphi_i$ and $u_n=w_n$. Furthermore, for $1\leq k,l\leq n-1$, a direct computation shows that  $$u_{kl}=w_{kl}-w_{kn}\varphi_l-w_{nl}\varphi_k+w_{nn}\varphi_k\varphi_l,$$ $$u_{kn}=w_{kn}-w_{nn}\varphi_k,$$ $$u_{ln}=w_{ln}-w_{nn}\varphi_l,$$ $$ u_{nn}=w_{nn}.$$ It is obvious that by
$$\|D^2 u\|_{C(\bar{\Omega})}+\|D u\|_{C(\bar{\Omega})}\leq C$$
we get
\begin{equation}\label{e2019040801}
\|D^2 w\|_{C(\overline{T(\Omega_{m_0})})}+\|D w\|_{C(\overline{T(\Omega_{m_0})})}\leq C,
\end{equation}
where $C$ is constant depending only on the known data.

Since $w(z)=u(T^{-1}(z))$  and $x=T^{-1}(Z)$, we obtain  $$D^2u=D(DT\cdot Dw),$$ $$ D_{x}u=D_{z}w-w_n(D_{z'}\varphi,0)^t,$$
where $D_{x}, D_{z}$ and $D_{z'}$ denote the gradient according to the variables $x, z$ and $z'$.

Then by (\ref{e1.8}),   $w$ satisfies
\begin{equation}\label{e2019040802}
\left\{ \begin{aligned}
F\left[\lambda(D(DT\cdot Dw))\right]&=f(T^{-1}(z))+c, \ \ &&z\in T(\Omega_{m_0}), \\
h(D_{z}w-w_n(D_{z'}\varphi,0)^t)&=0,\ \ &&z\in \partial T(\Omega_{m_0})\cap \{z_n=0\}.
\end{aligned} \right.
\end{equation}
When $1\leq s\leq n-1$, we set $v=\frac{\partial w}{\partial z_s}$.
Let $a^{ij}(1\leq i,j\leq n), \beta^i(1\leq i \leq n)$ be the definitions in Section 3.

In the following, the repeated indices $k,l$ denote the summation from $1$ to $n-1$  and  the repeated indices $i,j$  present the summation from $1$ to $n$.
Define  \begin{equation*}
[A^{ij}]_{1\leq i,j\leq n}=\left\{ \begin{aligned}
&a^{kl}, &1\leq k,l\leq n-1,\,\,(i,j)=(k,l), \\
&a^{kn}-a^{kl}\varphi_l,  &1\leq k\leq n-1,\,\,(i,j)=(k,n),\\
&a^{nn}+a^{kl}\varphi_k\varphi_l-2a^{kn}\varphi_k,\,\,&(i,j)=(n,n).
\end{aligned} \right.
\end{equation*}
Then differentiating  the equation (\ref{e2019040802}) with respect to $ z_s$,
we see that $v$ solves the following linear equation with oblique boundary condition
\begin{equation}\label{e2019040803}
\left\{ \begin{aligned}
L_s v&=\frac{\partial }{\partial z_s}f(T^{-1}(z)), \ \ &&z\in T(\Omega_{m_0}), \\
\gamma^kv_k+\gamma^nv_n&=\phi,\ \ &&z\in \partial T(\Omega_{m_0})\cap \{z_n=0\},
\end{aligned} \right.
\end{equation}
where
$$L_s=A^{ij}\frac{\partial^{2}}{\partial z_{i}\partial z_{j}},$$
$$\gamma^k=\beta^k (1\leq k\leq n-1), \,\, \gamma^n=\beta^n-\beta^k\varphi_k,$$
$$\phi=w_n\beta^k\varphi_{ks}.$$
By Theorem 1.1 in \cite{LT}, we get $u\in C^{2,\alpha}(\bar\Omega)$ for some $0<\alpha<1$, from which one deduces that
$$A^{ij}\in C^{\alpha}( T(\bar\Omega_{m_0})),$$
$$\gamma^{i}\in C^{1,\alpha}( T(\bar\Omega_{m_0})),$$
$$\phi\in C^{1,\alpha}( T(\bar\Omega_{m_0})).$$
It's obvious that the unit inward normal vector of $\partial\Omega$  at $x_0=(\theta, 0)$ is $\nu=(0,0,\cdots,0,1)$ .
Using lemma \ref{l2.3}, one can shows that $$\beta^n|_{x_0}=\langle\beta, \nu\rangle|_{x_0}\geq \frac{1}{C_{1}}.$$
By $D\varphi(\theta)=0$,  taking $r>0$ small enough such that for any $z'\in B'_r(\theta)\subset\Omega'$, we obtain
$$3n\Lambda_2\geq [A^{ij}]\geq \frac{1}{3n}\Lambda_1.$$
and
$$\langle\gamma, \nu\rangle=\beta^n-\beta^k\varphi_k\geq \frac{1}{3C_{1}},$$
where $\gamma=(\gamma^{1}, \gamma^{2}, \cdots, \gamma^{n})$ and $\nu=(0,0,\cdots,0,1)$ is the unit inward normal vector of
$\partial T(\Omega_{m_0})\bigcap \{z_{n}=0\}$. This implies that the equation (\ref{e2019040803}) is uniformly elliptic with strict oblique boundary condition.

Set $$\mathfrak{G}_{r}:=\{(z',z_n): z'\in B'_r(0),\ 0<z_n<m_0-\varphi(z')\}.$$
Then it follows by Theorem 6.30 and Theorem 6.3.1 in \cite{GT}, for $1\leq s\leq n-1$, we deduce that
$$\frac{\partial w}{\partial z_s}=:v\in C^{2,\alpha}(\overline{\mathfrak{G}_{r}}).$$
Let $Y=\frac{\partial w}{\partial z_n}$.
Then the differentiation of the equation (\ref{e2019040802})  to $ z_n$ gives
$$a^{nn}Y_{nn}+a^{kn}(Y_{kn}-Y_{nn}\varphi_k)+a^{ln}(Y_{ln}-Y_{nn}\varphi_l)+a^{kl}(Y_{kl}-Y_{kn}\varphi_l-Y_{nl}\varphi_k+Y_{nn}\varphi_k\varphi_l)=\frac{\partial f}{\partial z_n}.$$
It reads as
\begin{equation}\label{e2019040804}
(a^{nn}+a^{kl}\varphi_k\varphi_l-a^{kn}\varphi_k-a^{ln}\varphi_l)Y_{nn}=R.H.S,
\end{equation}
where
$$R.H.S:=-a^{kn}Y_{kn}-a^{ln}Y_{ln}-a^{kl}Y_{kl}+a^{kl}Y_{kn}\varphi_l+a^{kl}Y_{nl}\varphi_k+\frac{\partial f}{\partial z_n}.$$
By $v=\frac{\partial w}{\partial z_s}\in C^{2,\alpha}(\overline{\mathfrak{G}_{r}})$, we have $R.H.S\in C^{\alpha}(\overline{\mathfrak{G}_{r}})$.

Recalling the proof of $3n\Lambda_2\geq [A^{ij}]\geq \frac{1}{3n}\Lambda_1$ and taking $r>0$ small enough,  we can also obtain
$$a^{nn}+a^{kl}\varphi_k\varphi_l-a^{kn}\varphi_k-a^{ln}\varphi_l\geq \frac{1}{3n}\Lambda_1.$$
Then by (\ref{e2019040804}) and $(a^{nn}+a^{kl}\varphi_k\varphi_l-a^{kn}\varphi_k-a^{ln}\varphi_l)\in C^{\alpha}(\overline{\mathfrak{G}_{r}})$, we see that $$Y_{nn}\in C^{\alpha}(\overline{\mathfrak{G}_{r}})$$
and therefore $$w\in C^{3,\alpha}(\overline{\mathfrak{G}_{r}}).$$  Consequently, $u\in C^{3,\alpha}(\bar{\Omega}_{m_0})$.

By Finite Covering Theorem, we get $u\in C^{3,\alpha}(\bar\Omega)$. Using bootstrap argument, we obtain $u\in C^{\infty}(\bar\Omega)$.
\end{proof}

Now, by the continuity method, we can show that: \\
\noindent{\bf Proof of Theorem \ref{t1.2}.}
For each $t\in [0,1]$, consider
\begin{equation}\label{e4.1}
\left\{ \begin{aligned}F\left[D^{2}u\right]&=tf(x)+c(t), \ \ x\in \Omega, \\
Du(\Omega)&=\tilde{\Omega},
\end{aligned} \right.
\end{equation}
which is equivalent to
\begin{equation}\label{e4.2}
\left\{ \begin{aligned}F\left[D^{2}u\right]&=tf(x)+c(t), \ \ &&x\in \Omega, \\
h(Du)&=0,\ \ &&x\in\partial \Omega.
\end{aligned} \right.
\end{equation}
By the main results in \cite{HO} and \cite{HRY}, (\ref{e4.1}) is solvable if $t=0$. By Lemma \ref{yinli2.4.2}, denote the closed subset
$$X:=\{u\in C^{2,\alpha}(\bar{\Omega}): \int_{\Omega}u=0\}$$
in $C^{2,\alpha}(\bar{\Omega})$ and
$$Y:= C^{\alpha}(\bar{\Omega})\times C^{1,\alpha}(\partial\Omega).$$
Define a map from $X\times \mathbb{R}$ to $Y$ as
$$\mathfrak{F}^{t}:=\left(F(D^{2}u)-tf(x)-c(t),h(Du)\right).$$
Then the linearized operator $D\mathfrak{F}^{t}_{(u,c)}:X\times \mathbb{R}\rightarrow Y$ is given by
$$D\mathfrak{F}^{t}_{(u,c)}(w,a)=\left(F^{ij}(D^2u)\partial_{ij}w-a, h_{p_i}(Du)\partial_i w\right).$$
Repeating the proof of Proposition 3.1 in \cite{SM}, we know that $D\mathfrak{F}^{t}_{(u,c)}$ is invertible for any $t\in[0,1]$ and $(u,c)$ being the solution to (\ref{e4.1}).

Define the set
$$I:=\left\{t\in[0,1]:\text{(\ref{e4.1}) has at least one convex solution}\right\}.$$
Since $0\in I$, by Huang-Ou's theorem \cite{HO} or Huang-Ye's theorem \cite{HRY}, $I$ is not empty. We claim that $I=[0,1]$, which is equivalent to the fact that $I$ is not only open, but also closed. It follows from Proposition 3.1 in \cite{SM} again and Theorem 17.6 in \cite{GT} that $I$ is open. So we only need to prove that $I$ is a closed subset of $[0,1]$.

That $I$ is closed is equivalent to the fact that for any sequence $\{t_k\}\subset I$, if $\lim_{k\rightarrow \infty}t_k=t_0$, then $t_0\in I$. For $t_k$, denote $(u_k, c(t_k))$ solving
\begin{equation*}
\left\{ \begin{aligned}F\left[D^{2}u_k\right]&=t_kf(x)+c(t_k), \ \ x\in \Omega, \\
Du_k(\Omega)&=\tilde{\Omega}.
\end{aligned} \right.
\end{equation*}
It follows from Lemma \ref{lem3.6} that $\|u_k\|_{C^{2,\alpha}(\bar{\Omega})}\leq C$, where $C$ is independent of $t_k$. Since
$$|c(t)|=\left|F\left[D^{2}u\right]-tf(x)\right|\leq F\left(+\infty,\cdots,+\infty\right)+\max_{\bar{\Omega}}|f(x)|, $$
by Arzela-Ascoli Theorem we know that there exists $\hat{u}\in C^{2,\alpha}(\bar{\Omega})$, $\hat{c}\in\mathbb{R}$ and a subsequence of $\{t_k\}$, which is still denoted as $\{t_k\}$, such that letting $k\rightarrow \infty$,
\begin{equation*}
\left\{ \begin{aligned}
&\left\|u_k-\hat{u}\right\|_{C^{2}(\bar{\Omega})}\rightarrow 0, \\
&c(t_k)\rightarrow \hat{c}.
\end{aligned} \right.
\end{equation*}
Since $(u_k, c(t_k))$ satisfies
\begin{equation*}
\left\{ \begin{aligned}F\left[D^{2}u_k\right]&=t_kf(x)+c(t_k), \ \ &&x\in \Omega, \\
h(Du_k)&=0,\ \ &&x\in\partial \Omega,
\end{aligned} \right.
\end{equation*}
letting $k\rightarrow \infty$, we have
\begin{equation*}
\left\{ \begin{aligned}F\left[D^{2}\hat{u}\right]&=t_0f(x)+\hat{c}, \ \ &&x\in \Omega, \\
h(D\hat{u})&=0,\ \ &&x\in\partial \Omega.
\end{aligned} \right.
\end{equation*}
Therefore, $t_0\in I$, and thus $I$ is closed. Consequently, $I=[0,1]$. By Lemma \ref{yinli2.4.1} we know that the solution of (\ref{e4.1})  $u$ is unique up to a constant.

Then we complete the proof of Theorem \ref{t1.2}.
\qed

\section*{Acknowledgements}
The authors would like to express deep gratitude to Professor Yuanlong Xin for his suggestions and constant encouragement.

\end{document}